\documentclass[10pt,reqno]{amsart}
\RequirePackage[OT1]{fontenc}
\RequirePackage{amsthm,amsmath}
\RequirePackage[numbers]{natbib}
\RequirePackage[colorlinks,linkcolor=blue,citecolor=blue,urlcolor=blue]{hyperref}
\usepackage{amssymb}
\usepackage{anysize}
\usepackage{hyperref}
\usepackage{extarrows}
\usepackage{multirow}
\usepackage{natbib}
\usepackage{stmaryrd}
\usepackage{color}
\usepackage{amsmath}
\usepackage{enumitem}
\usepackage{mathtools} 
\usepackage{dsfont} 
\usepackage{comment}
\usepackage{soul}

\numberwithin{equation}{section}
\theoremstyle{plain} 
\newtheorem{theorem}{Theorem}[section]
\newtheorem{lemma}[theorem]{Lemma}

\newtheorem{proposition}[theorem]{Proposition}

\theoremstyle{definition}
\newtheorem{definition}[theorem]{Definition}
\newtheorem{assumption}[theorem]{Assumption}

\theoremstyle{remark}

\renewcommand{\Im}{\mathrm{Im}\,}

\newcommand{\E}{{\mathbb E }}

\newcommand{\R}{{\mathbb R }}
\newcommand{\N}{{\mathbb N}}

\renewcommand{\P}{{\mathbb P}}
\newcommand{\C}{{\mathbb C}}

\newcommand{\ii}{\mathrm{i}}
\newcommand{\ee}{\mathrm{e}}
\newcommand{\deq}{\mathrel{\mathop:}=}
\newcommand{\dd}{\mathrm{d}}
\newcommand{\ie}{\emph{i.e., }}
\newcommand{\eg}{\emph{e.g., }}
\newcommand{\cf}{\emph{c.f., }}

\newcommand{\wt}{\widetilde}

\newcommand{\bs}{\boldsymbol}

\def\Tr{\mathrm{Tr}}
\def\i{\text{i}}

\def\Dim{\Delta {\mathrm{Im}}\,}

\def\nn{\mathfrak{n}}

\def\one{\mathds{1}}

\def\<{\langle}
\def\>{\rangle}

\def\X{\mathcal{X}}

\renewcommand{\mathbf}[1]{\bs{#1}}
 
\marginsize{25mm}{25mm}{25mm}{26mm}

\allowdisplaybreaks

\begin{document}

 \begin{minipage}{0.85\textwidth}
 	\vspace{2.5cm}
 \end{minipage}
 \begin{center}
 	\large\bf Quantitative Tracy--Widom laws for the largest eigenvalue\\ of generalized Wigner matrices
 	
 \end{center}

 \renewcommand{\thefootnote}{\fnsymbol{footnote}}	
 \vspace{0.5cm}
 
 \begin{center}
 	\begin{minipage}{1.4\textwidth}

 		\begin{minipage}{0.33\textwidth}
 			\begin{center}
 				Kevin Schnelli\footnotemark[1]\\
 				\footnotesize 
 				{KTH Royal Institute of Technology}\\
 				{Stockholm, Sweden}\\
 				{\it schnelli@kth.se}
 			\end{center}
 		\end{minipage}
 		\begin{minipage}{0.33\textwidth}
 			\begin{center}
 				Yuanyuan Xu\footnotemark[2]\\
 				\footnotesize 
 				{Institute of Science and Technology Austria}\\
 				{Klosterneuburg, Austria}\\
 				{\it yuanyuan.xu@ist.ac.at}
 			\end{center}
 		\end{minipage}
 	\end{minipage}
 \end{center}
 
 \bigskip

 \footnotetext[1]{Supported by the Swedish Research Council (VR-2017-05195, VR-2021-04703), and the Knut and Alice Wallenberg Foundation.}
 \footnotetext[2]{Supported by the Swedish Research Council Grant VR-2017-05195, and the ERC Advanced Grant ``RMTBeyond" No.~101020331}

 \renewcommand{\thefootnote}{\fnsymbol{footnote}}	
 
 \vspace{1cm}
 
 \begin{center}
 	\begin{minipage}{0.83\textwidth}\footnotesize{
 			{\bf Abstract.}}
 		We show that the fluctuations of the largest eigenvalue of any generalized Wigner matrix~$H$ converge to the Tracy--Widom laws at a rate nearly $O(N^{-1/3})$, as the matrix dimension~$N$ tends to infinity. We allow the variances of the entries of~$H$ to have distinct values but of comparable sizes such that $\sum_{i} \E|h_{ij}|^2=1$. Our result improves the previous rate $O(N^{-2/9})$ by Bourgade~\cite{Bourgade extreme} and the proof relies on the first long-time Green function comparison theorem near the edges without the second moment matching restriction.

 	\end{minipage}
 \end{center}

 \vspace{5mm}
 
 {\small
 	\footnotesize{\noindent\textit{Date}: August 3, 2022}\\
 	\footnotesize{\noindent\textit{Keywords}: Wigner matrix, Edge universality, Tracy--Widom distributions, Green function comparison}\\
 	\footnotesize{\noindent\textit{MSC classes}: 15B52, 60B20}
 }
 
 \vspace{2mm}

 \thispagestyle{headings}

\section{Introduction}

In this paper we study quantitative statements of the edge universality for generalized Wigner matrices. The edge universality for self-adjoint random matrix models states that the appropriately centered largest eigenvalue $\lambda_N$ fluctuates on scale $N^{-2/3}$, with $N$ the matrix size, and the distributional convergence
\begin{align}\label{basic convergence}
	\lim_{N\rightarrow \infty} \P\big(N^{2/3}(\lambda_N-E_+)\le r\big)=\mathrm{TW}_{\beta}(r)\,,\qquad r\in\R\,,
\end{align}
holds with  $E_+$ being the centering. The universal limiting laws $\mathrm{TW}_\beta$ were identified by Tracy and Widom~\cite{TW1,TW2} who proved the above convergence with $E_+=2$ for the invariant Gaussian ensembles, GUE and GOE, with $\beta=2$ for the complex Hermitian respectively $\beta=1$ for the real symmetric symmetry class. Using the integrability structure of the Gaussian ensembles the convergence in~\eqref{basic convergence} was quantified explicitly by Johnstone and Ma in~\cite{gaussian_speed} as a Berry-Esseen type theorem:  For any fixed $r_0 \in \R$, there exists a constant $C=C(r_0)$ such that 
\begin{equation}\label{gaussian}
	\sup_{r > r_0}\Big|\P^{\mathrm{G\beta E}}\Big( N^{2/3} (\lambda_N-E_+ )<r \Big)- \mathrm{TW}_{\beta}(r) \Big| \leq C N^{-2/3}\,,
\end{equation}
for sufficiently large $N$, where $E_+=2$ for the GUE, respectively $E_+=\sqrt{4-\frac{2}{N}}$ for the GOE. Related asymptotic expansions of edge correlation kernels were obtained in \cite{choup, Forrester,peter2} and strong convergence results of edge kernel for $\beta$-ensembles with general potentials were derived in~\cite{Deift,Deift2}.

The edge universality, \ie the extension of the convergence results in~\eqref{basic convergence} to non-invariant matrix ensembles, was established in many works over the last two decades, \eg for Wigner matrices~\cite{rigidity,LY, PeSo2, So1, TV2} with deformations and generalizations~\cite{XXX,BEY edge universality,KY_deform, deformed}, for adjacency matrices of random graphs~\cite{huang_dregular, sparse0,He,HLY,sparse_huang,sparse_lee, sparse}, for band random matrices \cite{band_sodin}, and for sample covariance matrices~\cite{bao, Ding+Yang, sample_kevin, peche, PY1,wang_ke}. Yet results for effective convergence rates other than the invariant ensembles are still scarce. The first quantitative estimate for generalized Wigner matrices was obtained by Bourgade in~\cite{Bourgade extreme} who established a convergence rate of order almost $O(N^{-2/9})$. In our recent work~\cite{Schnelli+Xu} we obtained the convergence rate almost $O(N^{-1/3})$ for Wigner matrices. The purpose of the present work is to establish the same convergence rate for generalized Wigner matrices. Generalized Wigner matrices, introduced in~\cite{EYY1}, have independent centered entries, up to the symmetry constraints, whose distributions may be distinct with an inhomogeneous variance profile. More precisely, denoting the entries by $h_{ij}$ one has $\E h_{ij}=0$ and $\E |h_{ij}|^2=S_{ij}$ with $S=(S_{ij})$ a doubly stochastic matrix. In case $S_{ij}=1/N$ one recovers the usual definition of Wigner matrices. We will assume that the entries of $S$ are uniformly bounded from below implying a spectral gap; see~\eqref{S_spectrum} later. A theoretical motivation for studying this type of models is to extend the universality to models beyond mean-field systems. On the methodological side, we aim to lift the second moment matching restriction for comparisons in previous works, \eg~\cite{sparse0,rigidity,LY}, to prove edge universality. In applications generalized Wigner matrices arise as centered adjacency matrices of balanced stochastic block models~\cite{bickel,Ji_Oon,Lei}.

We derive our main convergence results using a {\it Green function comparison} strategy tracing back to Erd\H{o}s, Yau and Yin~\cite{EYY1}, see also the related four moment theorem of Tao and Vu~\cite{TV1}, for the bulk universality. There a four moment matching condition is required and this restriction can be removed using a sophisticated dynamical approach relying on the local relaxation of Dyson's Brownian motion~(DBM)~\cite{EYS} and related subsequent works in~\cite{BEYY, book, fixed_DBM, LandonYau}. The Green function comparison approach to the edge universality only requires second moment matching \cite{rigidity}, while earlier works on the edge universality use moment methods~\cite{PeSo2,SinaiSo,So1} or third moment matching~\cite{TV2}. Edge universality for generalized Wigner matrices was first proved in~\cite{BEY edge universality} by combining local relaxation results of the DBM at the edges (see also~\cite{AH,Bourgade extreme,LandonYau_edge}) and a short-time Green function comparison to remove the small Gaussian convolution. Combining his quantitative local relaxation estimates for the DBM with a Green function comparison for short times, Bourgade~\cite{Bourgade extreme} obtained the convergence rate $O(N^{-2/9})$ to the Tracy--Widom laws for generalized Wigner matrices.

In this paper, we use a long-time continuous Green function flow~\cite{deformed,sparse} in combination with cumulant expansions for our Green function comparison on a much finer spectral parameter scale (slightly above~$N^{-1}$) with an improved error estimate $O(N^{-1/3})$. The novelty of our method is to establish the first Green function comparison theorem near the edges without the second moment matching restriction, which is new even for Gaussian matrices. This comparison requires precise estimates on the contributions from inhomogeneous variances of the matrix entries to the interpolating Green function flow that turn out to be considerably harder than those from the third and fourth order moments considered in \cite{rigidity,Schnelli+Xu}. To overcome this difficulty, we introduce a new expansion mechanism for products of the Green function entries and the entries of the variance profile matrix $S$. Performing the cumulant expansions iteratively in combination with the spectral gap of $S$ in (\ref{S_spectrum}) below and applying a Gr\"onwall argument, we extend the explicitly-computable estimates for the Gaussian invariant ensembles \cite{gaussian_speed} to arbitrary generalized Wigner matrices.

{\it Organization of the paper:} In Section~\ref{sec:main_result}, we state the quantitative Tracy--Widom laws for generalized Wigner matrices which follow from our main technical result, the Green function comparison in Theorem~\ref{green_comparison}. In Section~\ref{sec:toy}, we consider a special case of Theorem~\ref{green_comparison} with $F=\mathrm{id}$, \ie Proposition~\ref{GCT_mn}. First, in Subsection~\ref{sec:strategy} we sketch the proof strategy for Proposition~\ref{GCT_mn}, and the formal proof is subsequently presented in Subsection~\ref{sec:proof_prop}. The proof contains two main ingredients, Proposition~\ref{GCT_mn_1} and Proposition~\ref{GCT_mn_2}, which are proved in Section~\ref{sec:step1} and Section~\ref{sec:step2} respectively. Finally, in Section~\ref{sec:proof} we give the proof of Theorem~\ref{green_comparison} for a general function $F$ by extending the proof scheme of Proposition~\ref{GCT_mn}.

{\it Acknowledgment:} We thank L\'aszl\'o Erd\H{o}s and Rong Ma for helpful discussions.\\

{\it Notation:} We will use the following definition on high-probability estimates from~\cite{Erdos+Knowles+Yau}. 
\begin{definition}\label{definition of stochastic domination}
	Let $\mathcal{X}\equiv \mathcal{X}^{(N)}$ and $\mathcal{Y}\equiv \mathcal{Y}^{(N)}$ be two sequences of nonnegative random variables. We say~$\mathcal{Y}$ stochastically dominates~$\mathcal{X}$ if, for all (small) $\tau>0$ and (large)~$\Gamma>0$,
	\begin{align}\label{prec}
		\P\big(\mathcal{X}^{(N)}>N^{\tau} \mathcal{Y}^{(N)}\big)\le N^{-\Gamma},
	\end{align}
	for sufficiently large $N\ge N_0(\tau,\Gamma)$, and we write $\mathcal{X} \prec \mathcal{Y}$ or $\mathcal{X}=O_\prec(\mathcal{Y})$.
\end{definition}
We often use the notation $\prec$ also for deterministic quantities, then~\eqref{prec} holds with probability one. Properties of stochastic domination can be found in the following lemma.
\begin{lemma}[Proposition 6.5 in \cite{book}]\label{dominant}
	\begin{enumerate}
		\item $X \prec Y$ and $Y \prec Z$ imply $X \prec Z$;
		\item If $X_1 \prec Y_1$ and $X_2 \prec Y_2$, then $X_1+X_2 \prec Y_1+Y_2$ and $X_1X_2 \prec Y_1Y_2;$
		\item If $X \prec Y$, $\E Y \geq N^{-c_1}$ and $|X| \leq N^{c_2}$ almost surely with some fixed exponents $c_1$, $c_2>0$, then we have $\E X \prec \E Y$.
	\end{enumerate}
\end{lemma}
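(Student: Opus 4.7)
The plan is to treat the three parts of the lemma separately, each by elementary union-bound and splitting arguments; these are classical facts of calculus with the $\prec$ symbol.

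For part~(1), given target exponents $\tau>0$ and $\Gamma>0$, I would invoke the defining high-probability bounds for $X\prec Y$ and $Y\prec Z$ at the halved exponent $\tau_0=\tau/2$ and at the enlarged exceptional exponent $\Gamma$. On the intersection of the two good events $\{X\le N^{\tau_0}Y\}\cap\{Y\le N^{\tau_0}Z\}$ one has $X\le N^{2\tau_0}Z=N^{\tau}Z$ pointwise, and a union bound yields $\P(X>N^{\tau}Z)\le 2N^{-\Gamma}$, which is $\le N^{-\Gamma'}$ for any prescribed $\Gamma'<\Gamma$ once $N$ is large. Part~(2) follows the same template: on the analogous intersection of the two good events, both inequalities $X_1+X_2\le N^{\tau_0}(Y_1+Y_2)$ and $X_1 X_2\le N^{2\tau_0}Y_1Y_2$ hold pointwise, and another union bound absorbs the exceptional sets.

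Part~(3) is the only part with genuine content. Using that $X$ and $Y$ are nonnegative, I would split the expectation via the indicator of the good event $A=\{X\le N^{\tau}Y\}$:
\begin{equation*}
\E X=\E[X\one_A]+\E[X\one_{A^c}]\le N^{\tau}\E Y + N^{c_2}\P(A^c)\le N^{\tau}\E Y + N^{c_2-\Gamma},
\end{equation*}
where the a.s. bound $X\le N^{c_2}$ is used on $A^c$. Given any target exponent $\tau'>0$, I would then pick $\tau=\tau'/2$, together with $\Gamma=\Gamma(\tau',c_1,c_2)$ large enough that $N^{c_2-\Gamma}\le N^{-c_1}\le \E Y$ via the lower bound $\E Y\ge N^{-c_1}$; concretely $\Gamma\ge c_1+c_2$ suffices. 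This gives $\E X\le (N^{\tau'/2}+1)\E Y\le N^{\tau'}\E Y$ for $N$ sufficiently large, which is precisely the deterministic version of $\E X\prec\E Y$.

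The main (mild) obstacle is the calibration in part~(3): both hypotheses $\E Y\ge N^{-c_1}$ and $|X|\le N^{c_2}$ a.s. are essential, the former to compare the deterministic tail $N^{c_2-\Gamma}$ against $\E Y$ for a $\Gamma$ that is independent of $\tau'$, and the latter to make the tail deterministic in the first place. Without either assumption the splitting argument breaks and no amount of enlarging $\Gamma$ can rescue the conclusion. Parts~(1) and~(2) are pure bookkeeping once one notices that halving the $\tau$-exponent creates the needed room for two union-bounded events.
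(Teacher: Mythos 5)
The paper does not actually prove this lemma; it is cited verbatim as Proposition~6.5 of the Erd\H{o}s--Yau book \cite{book}, so there is no in-paper argument to compare against. Your blind proof is the standard elementary one for stochastic domination and it is correct in all three parts: halving the $\tau$-exponent plus a union bound handles the transitivity and the sum/product rules, and the splitting of $\E X$ over the good event $A=\{X\le N^{\tau}Y\}$ and its complement, with the a.s.\ bound $|X|\le N^{c_2}$ controlling the tail and the lower bound $\E Y\ge N^{-c_1}$ absorbing it, gives part~(3). You correctly identify that both side hypotheses in part~(3) are load-bearing and that $\Gamma\ge c_1+c_2$ is the right calibration, with $\Gamma$ chosen independently of the target $\tau'$.

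One small presentational wrinkle in part~(1): you invoke the hypotheses ``at the enlarged exceptional exponent $\Gamma$'' but then conclude a bound only at some smaller $\Gamma'<\Gamma$. The cleaner bookkeeping is the reverse: fix the target $\Gamma$, apply the two defining bounds at exponent $\Gamma+1$, and observe that $2N^{-(\Gamma+1)}\le N^{-\Gamma}$ for $N$ large. The same remark applies to part~(2). This is cosmetic and does not affect the validity of the argument; the implicit use of $N_0=\max(N_0^{(1)},N_0^{(2)})$ for the combined good event is also fine and standard.
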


For any matrix $A \in \C^{N \times N}$, the matrix norm induced by the Euclidean vector norm is denoted by $\|A\|:=\sigma_{\max}(A)$, where $\sigma_{\max}(A)$ denotes the largest singular value of $A$. We denote the sup norm of the matrix by $\|A\|_{\max}:=\max_{i,j}|A_{ij}|$. 

Throughout the paper, we use~$c$ and~$C$ to denote strictly positive constants that are independent of~$N$. Their values may change from line to line. For $X,Y \in \R$, we write $X \sim Y$ if there exist constants $c, C>0$ such that $c |Y| \leq |X| \leq C |Y|$ for large $N$. Finally, we denote $\C^+\deq\{z\in\C\,:\,\Im z>0\}$ and $\R^+\deq\{x\in\R\,:\,x \geq 0\}$.

\section{Main result}\label{sec:main_result}

Let $H=(h_{ij})_{1\leq i,j\leq N}$ be an $N \times N$ real symmetric ($\beta=1$) or complex Hermitian ($\beta=2$) generalized Wigner matrix satisfying the following assumptions.
\begin{assumption}\label{assumption_1}
	\begin{enumerate}
		\item $\{ h_{ij}|  i \leq j \} $ are independent random variables with $\E[h_{ij}] = 0$.
		\item Denoting the {\it variance profile matrix} by $S =(S_{ij})_{1\leq i,j\leq N}$ where $S_{ij}=\E |h_{ij}|^2$, then $S$ satisfies the  summation condition
				\begin{equation}\label{sum_S}
					\sum_{i=1}^N S_{ij}=1, \quad \qquad 1 \leq j \leq N.
				\end{equation}	
		Moreover, there exist two strictly positive constants $C_{\inf},C_{\sup}$ independent of $N$ such that
		\begin{equation}\label{flat}
			C_{\inf} \leq \inf_{i,j} \{N S_{ij} \}\leq \sup_{i,j}\{N S_{ij}\}  \leq C_{\sup}.
		\end{equation}
	For the complex case, we additionally assume that 
	\begin{equation}\label{condition_complex}
		\E [(h_{ij})^2]=0, \qquad  \forall~i \neq j.
	\end{equation}
		\item  All moments of the entries of $\sqrt{N}H$ are uniformly bounded, i.e., for any $k \geq 3$, there exists $C_k$ independent of $N$ such that for all $1 \leq i,j \leq N$,
		\begin{equation}\label{moment_condition}
			\E |\sqrt{N} h_{ij}|^k \leq C_k.
		\end{equation}	
	\end{enumerate}

\end{assumption}

Note that the variance profile matrix $S$ is a symmetric and doubly stochastic matrix whose spectrum lies in $[-1,1]$. More importantly, from the lower bound in~(\ref{flat}), there is a spectral gap in the spectrum of $S$, \ie there exist constants $c_\pm \geq C_{\inf}>0$ such that 
\begin{equation}\label{S_spectrum}
	\mathrm{Spec}(S) \subset [-1+c_-,1- c_+] \cup \{1\}\,,
\end{equation}
where $1$ is a simple eigenvalue. For a reference see Chapter 6.5 in \cite{book}.

 	In the homogeneous case $S_{ij} = 1/N$, we recover the original definition of Wigner matrices with the spectral gap $c_\pm=1$. 	The prominent Gaussian invariant ensembles are special Wigner matrices with independent Gaussian entries, which we denote by G$\beta$E for short. More precisely, for the Gaussian unitary ensemble (GUE, $\beta=2$), one requires $\sqrt{N}h_{ij} \stackrel{{\rm d}}{=} \mathbf{N}(0,1/2)+\ii \mathbf{N}(0,1/2)$ and $\sqrt{N}h_{ii} \stackrel{{\rm d}}{=} \mathbf{N}(0,1)$. For the Gaussian orthogonal ensemble (GOE, $\beta=1$), we assume $\sqrt{N}h_{ij} \stackrel{{\rm d}}{=} \mathbf{N}(0,1)$ ($i\not=j$) and $\sqrt{N}h_{ii} \stackrel{{\rm d}}{=} \mathbf{N}(0,2)$.

Our main result is a quantitative version of the Tracy--Widom laws for the largest eigenvalue of generalized Wigner matrices satisfying Assumption \ref{assumption_1}.

\begin{theorem}\label{thm_main_result}
	Let $H$ be a real symmetric or complex Hermitian generalized Wigner matrix satisfying Assumption \ref{assumption_1} and denote its largest eigenvalue by $\lambda_N$. For any fixed $r_0 \in \R$ and any small $\omega>0$, 
	\begin{equation} \label{result}
		\sup_{ r > r_0 }\Big|\P \Big( N^{2/3} (\lambda_N-2) < r \Big) - \mathrm{TW}_{\beta}(r) \Big| \leq  N^{-\frac{1}{3}+\omega},
	\end{equation}
	for sufficiently large $N \geq N_0(r_0,\omega)$. The corresponding statement holds true for the smallest eigenvalue.
\end{theorem}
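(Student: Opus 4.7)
The plan is to combine the long-time Green function comparison in Theorem~\ref{green_comparison} with the explicit quantitative Tracy--Widom estimate for the Gaussian invariant ensembles from~\cite{gaussian_speed}. The argument has three stages plus routine bookkeeping, with essentially all of the difficulty already encapsulated in Theorem~\ref{green_comparison}.

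\textbf{Step 1: Reduction to a smooth Green function observable.} Fix $E \deq 2 + rN^{-2/3}$ and write
\begin{equation*}
\P\bigl(N^{2/3}(\lambda_N - 2) < r\bigr) = \E\bigl[\one(\mathcal{N}(E,\infty) = 0)\bigr],
\end{equation*}
where $\mathcal{N}(I)$ is the number of eigenvalues of $H$ in $I$. I would approximate this indicator by a smooth functional of $N\eta\,\Im m_N(E' + \ii\eta)$ at spectral scale $\eta = N^{-1-\epsilon_2}$ slightly above $N^{-1}$, with $E'$ ranging over a window of length $\ell = N^{-2/3-\epsilon_1}$ near $E$, following the now-standard smoothing used, e.g., in~\cite{EYY1,rigidity}. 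The smoothing error is controlled by the known edge rigidity for generalized Wigner matrices.

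\textbf{Step 2: Green function comparison to a Gaussian reference.} I would then apply Theorem~\ref{green_comparison} to the smooth test function produced in Step~1 to compare $\E^H[F(\Im m_N(z))]$ with $\E^W[F(\Im m_N(z))]$, where $W$ is a G$\beta$E matrix of the same symmetry class. The crucial feature is that Theorem~\ref{green_comparison} requires no second-moment matching, so the comparison applies directly in one step even though $H$ has an inhomogeneous variance profile $S$ while $W$ has the homogeneous profile $S^W_{ij} = 1/N$. This incurs an error of order $N^{-1/3+\omega/2}$ uniformly in the relevant range of $r$.

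\textbf{Step 3: Gaussian input and conclusion.} For $W$, the Johnstone--Ma estimate~\eqref{gaussian} gives $|\P^W(N^{2/3}(\lambda_N - 2) < r) - \mathrm{TW}_\beta(r)| \leq C N^{-2/3}$. For the GOE the exact centering in~\eqref{gaussian} is $\sqrt{4 - 2/N}$ rather than $2$, but since $\mathrm{TW}_\beta$ has bounded density, shifting $r$ by $O(N^{-1/3})$ only contributes $O(N^{-1/3})$ to~\eqref{result}. For the uniformity in $r > r_0$, edge rigidity gives an essentially Gaussian upper tail for $\lambda_N$, so one may restrict to $r \le (\log N)^{C}$ on which the pointwise bound suffices. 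Combining the three stages yields~\eqref{result}; the statement for the smallest eigenvalue follows by applying the same argument to $-H$.

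\textbf{Main obstacle.} Steps~1 and~3 are routine once Theorem~\ref{green_comparison} is granted; the entire difficulty is the comparison theorem itself, namely running a long-time interpolating Green function flow at the spectral scale $\eta$ just above $N^{-1}$ and showing that the contributions of the inhomogeneous variance profile $S$ to the cumulant expansion can be controlled without assuming $S_{ij} = 1/N$ (\ie without second-moment matching). This is precisely the novel mechanism the paper introduces, combining iterated cumulant expansions for products of Green function entries with the spectral gap~\eqref{S_spectrum} of $S$ via a Gr\"onwall argument.
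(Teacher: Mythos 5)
Your proposal follows the paper's proof of Theorem~\ref{thm_main_result} almost verbatim: Lemma~\ref{old_lemma} links $\P(\lambda_N<E)$ to $\E\big[F\big(N\int\Im m_N\big)\big]$, Theorem~\ref{green_comparison} compares $\E^H$ to $\E^{\mathrm{G\beta E}}$ directly in one step (the absence of a second-moment matching requirement is exactly what lets you skip an intermediate Gaussian ensemble with profile $S$), and the Johnstone--Ma estimate together with the boundedness of the Tracy--Widom density supplies the Gaussian input and the edge-shift correction for the GOE. You also correctly identify that the entire weight of the argument sits inside Theorem~\ref{green_comparison}.

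One caution on the parameter bookkeeping, which is where the rate is actually realized. Theorem~\ref{green_comparison} requires $\eta\ge N^{-1+\epsilon}$, so your $\eta=N^{-1-\epsilon_2}$ is a sign slip relative to your own description ``slightly above $N^{-1}$''. More substantively, if your $\ell=N^{-2/3-\epsilon_1}$ is meant to be the smoothing shift $l$ in Lemma~\ref{old_lemma} (the $E\pm l$ in \eqref{F_approx}), it is far too large: the smoothing cost is of order $N^{2/3}l$, so one needs $l$ of order $N^{-1+O(\epsilon)}$ (the paper takes $l=N^{6\epsilon}\eta$ with $\eta=N^{-1+\epsilon}$ and $\epsilon<\omega/7$), whereas $l\sim N^{-2/3}$ would give a smoothing error $O(N^{-\epsilon_1})$ that swamps the target rate $N^{-1/3+\omega}$. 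The length of the integration window $[E\pm l, E_L]$, which is $\sim N^{-2/3+\epsilon}$, is a separate quantity and is not the bottleneck.
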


The proof of Theorem \ref{thm_main_result} is based on the Green function comparison method initiated by Erd\H{o}s, Yau and Yin \cite{rigidity} to prove non-quantitative Tracy--Widom laws for Wigner matrices. Let
\begin{equation}\label{Green_fun}
	G(z):=\frac{1}{ H-z}\,, \qquad m_N(z):=\frac{1}{N} \Tr G(z)\,, \quad\qquad z =E+\ii \eta\in \C^+\,,
\end{equation}
denote the {\it resolvent} or {\it Green function} of the matrix $H$ and $m_N$ its normalized trace. The distribution of the rescaled largest eigenvalue can be linked to the expectation (of smooth functions) of $\Im m_N (z)$  for appropriately chosen spectral parameters $z$; see Lemma \ref{old_lemma} below.  Hence Theorem \ref{thm_main_result} follows from the Green function comparison in Theorem \ref{green_comparison} with $\eta$ chosen slightly above $N^{-1}$. Before we give the formal statement, we recall the local law for the Green function, which is a key tool in this paper.

\subsection{Local law for the Green function}
For a probability measure $\nu$ on $\R$, denote by $m_\nu$ its Stieltjes transform, i.e.,
\begin{align}
	m_\nu(z)\deq\int_\R\frac{\dd\nu(x)}{x-z}\,,\qquad z=E+\ii \eta\in\C^+\,.
\end{align}
Note that $m_{\nu}\,:\C^+\rightarrow\C^+$ is analytic and can be analytically continued to the real line outside the support of $\nu$. Moreover, $m_{\nu}$ satisfies $\lim_{\eta\nearrow\infty}\ii\eta {m_{\nu}}(\ii \eta)=-1$. The Stieltjes transform of the semicircle law $\dd \mu_{sc}(x):=\rho_{sc}(x) \dd x=\frac{1}{2 \pi} \sqrt{4-x^2} \mathds{1}_{[-2,2]} \dd x$, denoted by $m_{sc}(z)$, is the unique analytic solution $\C^+ \rightarrow \C^+$ to the equation 
\begin{equation}\label{msc}
	m_{sc}^2(z)+zm_{sc}(z)+1=0.
\end{equation} 
The Stieltjes transform $m_{sc}$ has the following quantitative properties, for a reference, see \eg~\cite{book}.
\begin{lemma}\label{prop_msc} The imaginary part of the Stieltjes transform of the semicircular law satisfies
		\begin{equation}\label{22}
			|{\Im}  m_{sc}(z)|  \sim \begin{cases}
				\sqrt{\kappa+\eta}, & \mbox{if } E \in[-2, 2], \\
				\frac{\eta}{\sqrt{\kappa+\eta}}, & \mbox{otherwise}\,,
			\end{cases}
		\end{equation}		
		uniformly in $z\in \{E+\ii \eta:|E| \leq 5, 0< \eta \leq 10 \}$, with $\kappa:=\min \{ |E-2|, |E+2| \}$. Moreover, $|m_{sc}(z)|\le 1$ holds on the same spectral domain.\end{lemma}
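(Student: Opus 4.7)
The plan is to derive the explicit form of $m_{sc}$ from \eqref{msc} and analyze it directly. Solving the quadratic and selecting the branch mapping $\C^+ \to \C^+$ (equivalently, the root vanishing at infinity), one obtains
\[ m_{sc}(z) = \frac{-z + \sqrt{z^2-4}}{2}, \]
where $\sqrt{z^2-4}$ denotes the branch analytic on $\C \setminus [-2,2]$ with $\sqrt{z^2-4} \sim z$ at infinity. Both the asymptotics in \eqref{22} and the bound $|m_{sc}| \leq 1$ reduce to estimates on this square root.

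For the asymptotics, the main regime to treat is small $\eta$ where cancellations near the edges are possible; the remaining range $\eta \in [c_0, 10]$ is handled by compactness, since both $|\Im m_{sc}|$ and the reference functions in \eqref{22} are continuous and strictly positive on $\{|E| \leq 5,\; \eta \in [c_0, 10]\}$. For small $\eta$, I would factor $z^2 - 4 = (z-2)(z+2)$; near the right edge, $\sqrt{z+2}$ is smooth and of order $2$, so the scaling of $\sqrt{z^2-4}$ is governed by $\sqrt{u + \ii\eta}$ with $u = E-2$. Writing $u + \ii\eta = \rho e^{\ii\phi}$ with $\rho = \sqrt{u^2+\eta^2}$, we have $\Im\sqrt{u+\ii\eta} = \sqrt{\rho}\sin(\phi/2)$. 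If $E \in [-2,2]$, then $u = -\kappa \leq 0$ and $\phi \in [\pi/2, \pi)$, so $\sin(\phi/2) \sim 1$ and $\sqrt{\rho} \sim \sqrt{\kappa+\eta}$, giving $\Im \sqrt{z^2-4} \sim \sqrt{\kappa+\eta}$. If $|E| > 2$, then (by symmetry take $E > 2$) $u = \kappa > 0$ and $\phi \in (0, \pi/2)$ with $\sin(\phi/2) \sim \eta/(\kappa+\eta)$, giving $\Im \sqrt{z^2-4} \sim \eta/\sqrt{\kappa+\eta}$. The left-edge regime is handled symmetrically by factoring out $\sqrt{z-2} \sim -2$ instead. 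Combining with $\Im m_{sc} = -\eta/2 + \tfrac12 \Im \sqrt{z^2-4}$ then yields \eqref{22} once one verifies the $-\eta/2$ subtraction does not erase the leading term: this is immediate in the first case since $\sqrt{\kappa+\eta} \gg \eta$ for small $\eta$, and in the second case follows from a direct Taylor check showing $\tfrac12\Im\sqrt{z^2-4} - \eta/2$ retains order $\eta/\sqrt{\kappa+\eta}$ uniformly.

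For the bound $|m_{sc}(z)| \leq 1$, I would rationalize to $m_{sc}(z) = -2/(z + \sqrt{z^2-4})$ and reduce the claim to $|z + \sqrt{z^2-4}| \geq 2$ on $\C^+$. This is an instance of the Joukowski-type observation that $w \mapsto (w + 4/w)/2$ conformally maps $\{w \in \C^+ : |w| > 2\}$ onto $\C^+ \setminus [-2,2]$ with inverse $z \mapsto z + \sqrt{z^2-4}$, forcing $|z + \sqrt{z^2-4}| \geq 2$ throughout. The main technical obstacle is the uniform case analysis in the polar-form expansion, particularly the transitional regime $\kappa \sim \eta$ and the verification of non-cancellation in the outside-bulk case; once organized into the two cases above, the rest is routine.
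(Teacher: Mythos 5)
The paper does not prove Lemma~\ref{prop_msc}; it cites the reference \cite{book} instead. Your direct computation from the explicit branch $m_{sc}(z)=\tfrac12(-z+\sqrt{z^2-4})$ is the standard route and is substantively correct: the polar-form identity $\Im\sqrt{u+\ii\eta}=\sqrt{\rho}\sin(\phi/2)$ together with $\sin(\phi/2)\sim 1$ on one side of the edge and $\sin(\phi/2)\sim\eta/(\kappa+\eta)$ on the other gives exactly the two regimes in \eqref{22}, and the Joukowski argument for $|m_{sc}|\leq 1$ is also fine (minor slip: the image of $\{w\in\C^+:|w|>2\}$ under $w\mapsto(w+4/w)/2$ is all of $\C^+$, not $\C^+\setminus[-2,2]$; also near the left edge $\sqrt{z-2}\approx 2\ii$ rather than $-2$, since the principal branch keeps $\Re\sqrt{\cdot}\geq 0$).

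The one place you should spend more care than what is written is the regime where $\eta$ is small but $z$ is neither close to $\pm 2$ nor covered by your compactness reduction $\eta\geq c_0$. There the factorization $\sqrt{z^2-4}\approx 2\sqrt{z-2}$ is no longer a controlled approximation, and the naive bound $\Im m_{sc}\approx -\eta/2+\Im\sqrt{u+\ii\eta}$ can fail to be positive once $\kappa$ is of order one (since $\Im\sqrt{u+\ii\eta}=\eta/(2\Re\sqrt{u+\ii\eta})\leq\eta/2$ there). Your ``Taylor check'' is the right fix, but it should be carried out: for fixed $E>2$, $m_{sc}(E)$ and $m_{sc}'(E)=\tfrac12\big(-1+E/\sqrt{E^2-4}\big)>0$ are real, so $\Im m_{sc}(E+\ii\eta)=\eta\,m_{sc}'(E)+O(\eta^2)$ with $m_{sc}'$ bounded above and below on $E\in[2+\kappa_0,5]$, matching $\eta/\sqrt{\kappa+\eta}\sim\eta$. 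An alternative that avoids splitting into near-edge and far-from-edge cases is to write $\sqrt{z\pm2}=a_\pm+\ii b_\pm$ with $a_\pm,b_\pm>0$ and $2a_\pm b_\pm=\eta$; then for $E>2$,
\begin{equation*}
\Im m_{sc}(z)=\frac{b_-\,(a_+-a_-)^2}{2a_+},
\end{equation*}
from which the comparability $\Im m_{sc}\sim b_-\sim \eta/\sqrt{\kappa+\eta}$ reads off uniformly on the stated domain, since $a_+\sim 1$ and $a_+-a_-$ is bounded below there. With one of these in place the argument closes; the overall plan is sound and is the same one the cited reference uses.
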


Before we state the local law for the Green function of $H$, we introduce the following spectral domain: For any given fixed $\epsilon>0$, let
\begin{equation}\label{ddd}
	\mathcal{S}= \mathcal{S}(\epsilon):=\big\{z=E+\ii \eta:  |E| \leq 5, N^{-1+\epsilon} \leq \eta \leq  10 \big\}.
\end{equation}

\begin{theorem}[Theorem 2.1 in \cite{rigidity}, Theorem 2.12 in \cite{Alex+Erdos+Knowles+Yau+Yin}]\label{locallawgene}
	Let $H$ be a generalized Wigner matrix satisfying Assumption \ref{assumption_1}. The following estimates hold uniformly in $z \in \mathcal{S}$,
	\begin{equation}\label{le law local}
		\max_{i,j} | G_{ij}(z) -\delta_{ij} m_{sc}(z) |   \prec \sqrt{ \frac{\Im m_{sc}(z)}{N \eta}} +\frac{1}{N \eta};\qquad | m_N(z) -m_{sc}(z) | \prec \frac{1}{N \eta}.
	\end{equation}	
\end{theorem}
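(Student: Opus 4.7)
The plan is to prove the entrywise and averaged local laws via the standard self-consistent equation approach, adapted to the inhomogeneous variance profile. For each diagonal entry, the Schur complement formula gives
\begin{equation*}
-\frac{1}{G_{ii}(z)} = z - h_{ii} + \sum_{j,k \ne i} h_{ij} G^{(i)}_{jk}(z) h_{ki},
\end{equation*}
where $G^{(i)}$ is the Green function of the minor $H^{(i)}$. Taking the conditional expectation over row/column $i$ and using $\E|h_{ij}|^2 = S_{ij}$, one can rewrite this as
\begin{equation*}
-\frac{1}{G_{ii}(z)} = z + \sum_{j} S_{ij} G_{jj}(z) + \Upsilon_i(z),
\end{equation*}
where the error $\Upsilon_i$ consists of the fluctuating quadratic form $\sum_{j,k \ne i}(h_{ij}\overline{h_{ik}} - S_{ij}\delta_{jk})G^{(i)}_{jk}$, the diagonal term $-h_{ii}$, and the correction from replacing $G^{(i)}_{jj}$ by $G_{jj}$ (controlled by $|G_{ij}|^2/G_{ii}$). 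By the large deviation bounds for quadratic forms in independent entries with all moments finite (Assumption~\ref{assumption_1}(3)) combined with the Ward identity $\sum_k |G^{(i)}_{jk}|^2 = \Im G^{(i)}_{jj}/\eta$, one would obtain
\begin{equation*}
\max_i |\Upsilon_i(z)| \prec \sqrt{\frac{\Im m_N(z)}{N\eta}} + \frac{1}{N\eta},
\end{equation*}
provided an a priori bound $\max_i|G_{ii}| \le C$ is available at scale $\eta$.

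Next I would linearize the self-consistent equation. Writing $v_i \deq G_{ii} - m_{sc}(z)$ and using \eqref{msc}, a Taylor expansion yields the vector equation
\begin{equation*}
(1 - m_{sc}^2 S)\, \mathbf{v} = m_{sc}^2\, \mathbf{\Upsilon} + \text{(errors quadratic in }\mathbf{v}\text{)}.
\end{equation*}
By the spectral gap assumption \eqref{S_spectrum}, $S$ has $1$ as a simple eigenvalue with eigenvector $\mathbf{1}/\sqrt{N}$, and the remaining spectrum lies in $[-1+c_-, 1-c_+]$. Decomposing $\mathbf{v}$ along this splitting, the restriction of $(1 - m_{sc}^2 S)$ to the orthogonal complement of $\mathbf{1}$ is uniformly invertible on $\mathcal{S}$, since $|m_{sc}| \le 1$. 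The only dangerous direction is the constant one, governing the average $\langle \mathbf{v}\rangle = m_N - m_{sc}$, whose stability is the classical scalar stability of the semicircle equation: the edge vanishing $1 - m_{sc}^2 \sim \sqrt{\kappa + \eta}$ is exactly compensated by the improved edge estimate $\Im m_{sc} \sim \sqrt{\kappa + \eta}$. Combining both pieces gives the entrywise bound on $v_i$ and, via an extra averaging cancellation in the $\mathbf{1}$-direction, the stronger averaged bound $|m_N - m_{sc}| \prec 1/(N\eta)$.

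To reach $\eta \ge N^{-1+\epsilon}$, I would run a continuity (bootstrap) argument in $\eta$: at $\eta \sim 1$ the trivial bound $\|G\| \le \eta^{-1}$ together with the moment assumption provides the base case, and then $\eta$ is decreased in small steps of size $N^{-C}$, propagating the high-probability a priori bound on $\max_i|G_{ii}|$ downward because $G$ is Lipschitz in $\eta$ on scale $\eta^{-2}$. The off-diagonal estimate $|G_{ij}| \prec \sqrt{\Im m_{sc}/(N\eta)}$ for $i\neq j$ follows from an analogous Schur-type identity applied to the $2\times 2$ block, using the already-established diagonal control.

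The main obstacle is the stability analysis near the spectral edges $E = \pm 2$, where $m_{sc}^2 \to 1$ and the linear operator $1 - m_{sc}^2 S$ loses invertibility on the $\mathbf{1}$-direction. Carrying out the bootstrap there requires balancing the quadratic error $\|\mathbf{v}\|^2$ against the edge-improved bound $\sqrt{\Im m_{sc}/(N\eta)}$, and is precisely the reason why the entrywise and averaged laws have different precision. The spectral gap condition \eqref{flat} is indispensable here, as it confines the instability to a single one-dimensional subspace and reduces the vector analysis to a scalar problem for which the techniques of \cite{rigidity,Alex+Erdos+Knowles+Yau+Yin} apply.
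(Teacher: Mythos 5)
The paper does not prove Theorem~\ref{locallawgene}; it quotes it from \cite{rigidity} and \cite{Alex+Erdos+Knowles+Yau+Yin}, so there is no in-paper proof to compare against. Your sketch does, however, reproduce the standard self-consistent-equation route used in those references: Schur complement, large-deviation estimate for the quadratic form $\sum_{j,k\ne i}(h_{ij}\overline{h_{ik}}-S_{ij}\delta_{jk})G^{(i)}_{jk}$ combined with the Ward identity, linearization into the vector equation $(1-m_{sc}^2 S)\mathbf{v}\approx m_{sc}^2\mathbf{\Upsilon}$, splitting along $\mathbf{1}$ versus its orthogonal complement using the spectral gap \eqref{S_spectrum}, and a continuity/bootstrap in $\eta$. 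That is the right skeleton.

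Two places in your outline deserve more than the passing mention you give them, because they are where the actual work happens. First, the ``extra averaging cancellation'' that upgrades the entrywise scale $\sqrt{\Im m_{sc}/(N\eta)}$ to the averaged scale $1/(N\eta)$ is the \emph{fluctuation averaging} lemma, and it is not a one-line averaging: it requires expanding $\sum_i \Upsilon_i$ to high order, exploiting that $\Upsilon_i$ is nearly centered after conditioning on the minor $H^{(i)}$, and tracking the combinatorics of resolvent expansions. Merely observing that the instability is one-dimensional does not yield the $1/(N\eta)$ bound; without fluctuation averaging the self-consistent equation only gives $|m_N-m_{sc}|\prec \sqrt{\Im m_{sc}/(N\eta)}$, which is too weak for the edge applications in this paper. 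Second, near the spectral edge the scalar stability analysis for the $\mathbf{1}$-component is not a straightforward inversion: since $1-m_{sc}^2\sim\sqrt{\kappa+\eta}$ can be comparable to the error, one needs the dichotomy argument (either $|m_N-m_{sc}|$ is small or it is bounded below by $\sqrt{\kappa+\eta}$, and the two branches must be separated along the bootstrap) to exclude the spurious solution of the quadratic self-consistent equation. Your outline gestures at both points but would need to incorporate them explicitly to constitute a proof rather than a plan; as a plan, it is correct and matches the cited references.
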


As a corollary of Theorem \ref{locallawgene}, we have the following estimates on the eigenvalue rigidity and eigenvector delocalization.
Denote by $(\lambda_j)_{j=1}^N$ the eigenvalues of $H$ arranged in non-decreasing order. 
The corresponding eigenvectors are denoted by $(\mathbf{u_{j}} )_{j=1}^N$. For any $E_1<E_2$ ($E_1,E_2\in\R \cup\{\pm\infty\}$) define the eigenvalue counting function by
\begin{equation}\label{number_particle}
	\mathcal{N}(E_1,E_2):=\# \{j: E_1 \leq \lambda_j \leq E_2\}\,. 
\end{equation}  
We also define the classical location $\gamma_j$ of the $j$-th eigenvalue $\lambda_j$ by
\begin{equation}\label{classical}
	\frac{j}{N}=\int_{-\infty}^{\gamma_j}  \dd \mu_{\mathrm{sc}} (x).
\end{equation}

\begin{theorem}[Theorem 2.2 in \cite{rigidity}]\label{rigidity}
	For any $E_1<E_2$ and $1\leq j \leq N$, we have the rigidity estimate for the eigenvalues 
	\begin{equation}\label{rigidity_estimate}
		\Big| \mathcal{N}(E_1,E_2)-N \int_{E_1}^{E_2} \dd \mu_{\mathrm{sc}} (x)  \Big| \prec 1, \qquad |\lambda_j-\gamma_j| \prec N^{-2/3} \Big( \min \{ j, N-j+1\} \Big)^{-1/3}.
	\end{equation}
	For any deterministic unit vector $\mathbf{v} \in \C^{N}$, we have the delocalization estimate for the eigenvectors
	\begin{align}\label{delocal_vector}
		|\langle \mathbf{v}, \mathbf{u}_j\rangle|^2 \prec \frac{1}{N},
	\end{align}
	uniformly for any $1\leq j\le N$.
\end{theorem}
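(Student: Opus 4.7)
The plan is to derive all three statements from the optimal local law (Theorem~\ref{locallawgene}) by standard transfer techniques from the Stieltjes transform to the empirical spectral distribution and the eigenvectors. The individual eigenvalue rigidity and eigenvector delocalization will be consequences of the counting function estimate, so I would establish that first.

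For the counting function bound I would use a Helffer--Sj\"ostrand representation. Let $\chi$ be a smooth cutoff equal to $1$ on $[E_1,E_2]$ and vanishing outside a slightly enlarged interval, and set $\tilde\chi(x+\ii y)=(\chi(x)+\ii y\chi'(x))\zeta(y)$ for a smooth cutoff $\zeta$ supported near the real axis. Then
\[
\sum_{j}\chi(\lambda_j)-N\int\chi\,\dd\mu_{\mathrm{sc}}=\frac{N}{\pi}\int_{\C}\partial_{\bar z}\tilde\chi(z)\,\bigl(m_N(z)-m_{sc}(z)\bigr)\,\dd x\,\dd y.
\]
Splitting the $y$-integration at scale $\eta_0=N^{-1+\epsilon}$, applying $|m_N-m_{sc}|\prec 1/(N\eta)$ for $|y|\geq\eta_0$, and integrating by parts in $y$ for $|y|<\eta_0$ to exploit the $y$-holomorphy of $m_N-m_{sc}$ yields a total error of order $\prec N^{\epsilon}$, hence $\prec 1$ since $\epsilon$ is arbitrary. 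The discrepancy between $\chi$ and $\mathds{1}_{[E_1,E_2]}$ is absorbed into the same error using the uniform bound on $\Im m_N$.

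For the individual eigenvalue estimate I would combine the counting function bound with the square-root behavior $\rho_{sc}(x)\sim\sqrt{\kappa}$ near $\pm 2$ from Lemma~\ref{prop_msc}. Given $\omega>0$, set $\delta_j:=N^{-2/3+\omega}(\min\{j,N-j+1\})^{-1/3}$; a direct computation using the definition~\eqref{classical} of $\gamma_j$ gives $N\int_{\gamma_j}^{\gamma_j+\delta_j}\dd\mu_{\mathrm{sc}}\gtrsim N^{\omega}$, so applying the counting function bound to $(-\infty,\gamma_j\pm\delta_j)$ forces $\lambda_j\in[\gamma_j-\delta_j,\gamma_j+\delta_j]$ off an event of probability $\leq N^{-\Gamma}$. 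For the delocalization, I would use the spectral decomposition
\[
\Im G_{ii}(\lambda_j+\ii\eta)=\eta\sum_{k=1}^{N}\frac{|\mathbf{u}_k(i)|^2}{(\lambda_k-\lambda_j)^2+\eta^2}\geq \frac{|\mathbf{u}_j(i)|^2}{\eta}.
\]
Choosing $\eta=N^{-1+\epsilon}$ and using $\Im G_{ii}\prec 1$ from Theorem~\ref{locallawgene} (since $\Im m_{sc}\leq 1$ by Lemma~\ref{prop_msc}) gives $|\mathbf{u}_j(i)|^2\prec\eta$, hence $\prec 1/N$. The extension to an arbitrary deterministic unit vector $\mathbf{v}$ uses the anisotropic strengthening $|\langle\mathbf{v},(G-m_{sc})\mathbf{v}\rangle|\prec \sqrt{\Im m_{sc}/(N\eta)}+1/(N\eta)$, obtained from the entry-wise local law by polarization.

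The main obstacle is keeping the estimates sharp uniformly in $j$, in particular near the spectral edges where $\rho_{sc}$ vanishes. This forces the $j$-dependent scale $\delta_j$ and a careful use of the edge behavior from Lemma~\ref{prop_msc} in both the Helffer--Sj\"ostrand cutoff and the counting-function-to-rigidity transfer. Beyond this, everything reduces to routine bookkeeping once the optimal local law is in hand.
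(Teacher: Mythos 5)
The paper does not prove Theorem~\ref{rigidity}; it imports it from Erd\H{o}s, Yau and Yin \cite{rigidity} (and, for the isotropic delocalization, implicitly from \cite{Alex+Erdos+Knowles+Yau+Yin}). Your sketch follows the standard route, and the Helffer--Sj\"ostrand bookkeeping for the counting function, the passage to individual eigenvalues via the $j$-dependent scale $\delta_j$, and the basis-vector delocalization via $\Im G_{ii}(\lambda_j+\ii\eta)\geq |\mathbf{u}_j(i)|^2/\eta$ are all sound.

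The genuine gap is in your last sentence. The isotropic bound $|\langle\mathbf v,(G-m_{sc})\mathbf v\rangle|\prec\sqrt{\Im m_{sc}/(N\eta)}+1/(N\eta)$ for an arbitrary deterministic unit vector $\mathbf{v}$ does \emph{not} follow from the entry-wise local law by polarization. Writing $\langle\mathbf v,(G-m_{sc})\mathbf v\rangle=\sum_{i,j}\bar v_i v_j(G_{ij}-\delta_{ij}m_{sc})$ and invoking $\max_{i,j}|G_{ij}-\delta_{ij}m_{sc}|\prec\Psi$ term by term gives only $\bigl(\sum_i|v_i|\bigr)^2\Psi$, which for a generic unit vector is as large as $N\Psi$; a polarization identity rearranges the bilinear form but does not produce the crucial cancellation among the $O(N^2)$ off-diagonal terms. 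That cancellation is precisely the content of the isotropic local law, which is a strictly stronger statement than the entry-wise one and requires a separate resolvent-expansion/self-consistency argument (this is why Theorem~\ref{locallawgene} cites \cite{Alex+Erdos+Knowles+Yau+Yin} alongside \cite{rigidity}). Once the isotropic local law is available, your spectral-decomposition step carries over verbatim: with $\eta=N^{-1+\epsilon}$ one has $|\langle\mathbf v,\mathbf u_j\rangle|^2\leq\eta\,\Im\langle\mathbf v,G(\lambda_j+\ii\eta)\mathbf v\rangle\prec\eta$, giving \eqref{delocal_vector}. So the fix is to cite the isotropic local law as an input rather than claim to derive it from the entry-wise one.
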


\subsection{Green function comparison theorem (GFT)}

With the help of the local law of the Green function in Theorem \ref{locallawgene} and the rigidity estimates of the eigenvalues in Theorem \ref{rigidity}, following~\cite{rigidity} we can link the distribution of the largest eigenvalue to a properly chosen observable in terms of the Green function as follows. 
\begin{lemma}[Lemma 2.5 in \cite{Schnelli+Xu}]\label{old_lemma}
Fix a small $\epsilon>0$ and a large $\Gamma>2/3$. We set 
$$E_L:=2+4N^{-2/3+\epsilon}.$$
For any $|E-2| \leq N^{-2/3+\epsilon}$ and $\eta \leq N^{-2/3-\epsilon}$, we have
\begin{align}\label{F_approx}
	\E \Big[ F\Big( N \int_{E-l}^{E_L} \Im m_N(y+\i \eta) \dd y \Big)\Big]-N^{-\Gamma} \leq \P\big( \lambda_N<E \big) \leq   \E \Big[F\Big( N \int_{E+l}^{E_L} \Im m_N (y+\i \eta) \dd y \Big)\Big]+N^{-\Gamma},
\end{align}
with $l=N^{6\epsilon} \eta$, and $F\,:\,\R\longrightarrow\R$ is a smooth cut-off function such that
\begin{equation}\label{q_function}
	F(x)=1, \quad \mbox{if} \quad |x| \leq 1/9; \qquad F(x)=0, \quad \mbox{if} \quad |x| \geq 2/9,
\end{equation}
and we assume that $F(x)$ is non-increasing for $x \geq 0$.
\end{lemma}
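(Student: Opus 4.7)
My plan is to interpret the argument of $F$ in \eqref{F_approx} as a smoothed eigenvalue count, and then to decide its value from the location of $\lambda_N$. The basic identity, valid for any $a<b$, is
\begin{equation*}
N\int_a^b \Im m_N(y+\ii\eta)\,\dd y = \sum_{j=1}^N\Big[\arctan\tfrac{b-\lambda_j}{\eta}-\arctan\tfrac{a-\lambda_j}{\eta}\Big],
\end{equation*}
which follows from a change of variables in the Poisson representation $\Im m_N(y+\ii\eta)=\tfrac1N\sum_j \tfrac{\eta}{(y-\lambda_j)^2+\eta^2}$. Each summand is the integral of a nonnegative Poisson kernel over $[a,b]$; it is close to $\pi$ when $\lambda_j$ sits well inside $[a,b]$, close to $0$ when $\lambda_j$ sits well outside, with transition happening only in a window of width $\eta$ around each endpoint. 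The buffer $l=N^{6\epsilon}\eta$ is designed so that this transition window never crosses the cut $E\pm l$.

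For the upper inequality $\P(\lambda_N<E)\leq \E[F(N\int_{E+l}^{E_L}\Im m_N)]+N^{-\Gamma}$, I would show $F\equiv 1$ on $\{\lambda_N<E\}$. There every $\lambda_j<E$, so $(E+l-\lambda_j)/\eta\geq l/\eta=N^{6\epsilon}$ and likewise $(E_L-\lambda_j)/\eta\gg 1$. Using $\arctan(x)-\arctan(y)\leq (x-y)/y^2$ for $x>y>0$, each summand is bounded by $\eta(E_L-E-l)/(E+l-\lambda_j)^2$. I would split the sum into the $O(N)$ bulk eigenvalues at $\Theta(1)$ distance from $E$ (contributing $O(N\cdot\eta\cdot N^{-2/3+\epsilon})=O(N^{-1/3})$ using $\eta\le N^{-2/3-\epsilon}$), and the $O_\prec(N^{3\epsilon/2})$ edge eigenvalues in $[2-N^{-2/3+\epsilon},2]$ provided by Theorem~\ref{rigidity} (each summand bounded by $\eta(E_L-E-l)/l^2=O(N^{-10\epsilon})$). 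The total is a negative power of $N$, hence below $1/9$ for $N$ large on an event of probability $\geq 1-N^{-\Gamma}$, so $F\equiv 1$ there, which yields the upper bound.

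For the lower inequality $\E[F(N\int_{E-l}^{E_L}\Im m_N)]\leq \P(\lambda_N<E)+N^{-\Gamma}$, I would show $F\equiv 0$ on $\{\lambda_N\geq E\}$ intersected with the rigidity event $\{\lambda_N\leq 2+N^{-2/3+\epsilon/3}\}$, whose complement has probability $\leq N^{-\Gamma}$ by Theorem~\ref{rigidity}. On this event $\lambda_N-(E-l)\geq l$ and $E_L-\lambda_N\geq 3N^{-2/3+\epsilon}-N^{-2/3+\epsilon/3}\gg\eta$, so the two arctangents in the $j=N$ summand are evaluated at arguments of opposite sign, both much larger than $1$ in absolute value, producing a contribution $\pi+o(1)$. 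All other summands are nonnegative, so the total exceeds $2/9$ and $F=0$.

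The main obstacle is lining up the scales $\eta\ll l\ll E_L-E\ll 1$ together with the $N^{-2/3}$ rigidity scale of $\lambda_N$, so that the same buffer $l$ simultaneously (i) keeps every eigenvalue in the polynomial-decay regime of $\arctan$ on the relevant side of the cut (needed for the upper inequality), and (ii) stays small enough compared with $E_L-E$ that the $\lambda_N$ contribution remains close to $\pi$ when $\lambda_N\in[E,E_L]$ (needed for the lower inequality). The choice $l=N^{6\epsilon}\eta$ together with $\eta\leq N^{-2/3-\epsilon}$ and $|E-2|\leq N^{-2/3+\epsilon}$ achieves both, after which the analysis reduces to elementary $\arctan$ estimates combined with the eigenvalue rigidity content of Theorem~\ref{rigidity}.
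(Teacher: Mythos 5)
Your overall strategy -- rewriting $N\int\Im m_N$ as a sum of arctangent differences, and deciding $F=1$ or $F=0$ from the location of $\lambda_N$ using rigidity -- is exactly the standard mechanism behind this lemma (going back to Erd\H os--Yau--Yin and used verbatim in~\cite{Schnelli+Xu}). The lower-bound half of your argument is sound. However, the upper-bound half, where you must show the argument of $F$ is below $1/9$ on $\{\lambda_N<E\}$, contains two genuine gaps.

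First, your estimate for the edge eigenvalues is wrong. You bound each summand by $\eta(E_L-E-l)/l^2$ and assert this is $O(N^{-10\epsilon})$, but
\begin{equation*}
\frac{\eta(E_L-E-l)}{l^2}\ \lesssim\ \frac{\eta\, N^{-2/3+\epsilon}}{N^{12\epsilon}\eta^{2}}\ =\ \frac{N^{-2/3-11\epsilon}}{\eta},
\end{equation*}
which equals $N^{-10\epsilon}$ only if $\eta=N^{-2/3-\epsilon}$. The hypothesis is $\eta\le N^{-2/3-\epsilon}$, and in the paper the lemma is invoked with $\eta\approx N^{-1+\epsilon}$, where your bound becomes $\sim N^{1/3-12\epsilon}$ per eigenvalue; multiplied by the $O_\prec(N^{3\epsilon/2})$ edge count this is far above $1/9$. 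The correct elementary bound for these eigenvalues is not $(x-y)/y^{2}$ but $\arctan x-\arctan y\le 1/y-1/x\le 1/y=\eta/(E+l-\lambda_j)\le \eta/l=N^{-6\epsilon}$, uniformly in $\eta$; that salvages the estimate, giving a total edge contribution $O_\prec(N^{-9\epsilon/2})$.

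Second, your split into ``bulk at $\Theta(1)$ distance from $E$'' and ``edge in $[2-N^{-2/3+\epsilon},2]$'' is not exhaustive: eigenvalues with $2-\lambda_j$ of intermediate size, say between $N^{-2/3+\epsilon}$ and a constant, fall into neither class and are left unestimated. You need a dyadic decomposition over the scale of $2-\lambda_j$ (or, equivalently, a bound on $\sum_j(E+l-\lambda_j)^{-2}$ via the rigidity count $\mathcal N(2-\delta,2)\prec N\delta^{3/2}+1$) to handle them. Doing so gives a contribution of order $O_\prec(\eta N^{2/3+\epsilon/2})=O_\prec(N^{-\epsilon/2})$, which is in fact the dominant term in the upper-bound estimate -- not the $O(N^{-1/3})$ you report from the $\Theta(1)$-distance eigenvalues. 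Both contributions are still $o(1)$, so the lemma holds, but the decomposition as written is incomplete.
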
 

To prove the quantitative Tracy--Widom laws in Theorem \ref{thm_main_result}, we establish the following GFT with an improved error term of size nearly $N^{-1/3}$.
	Compared to previous GFTs near the edges in \eg \cite{BEY edge universality, rigidity, sparse,Schnelli+Xu}, we remove the second moment matching restriction. In the meanwhile, the spectral resolution parameter $\eta$ is chosen down to slightly above~$N^{-1}$, much smaller than the typical eigenvalue spacing $N^{-2/3}$.

 \begin{theorem}[Green function comparison theorem near the edges]\label{green_comparison}
	Consider a generalized Wigner matrix $H$ satisfying Assumption \ref{assumption_1}. Let $F$ be a smooth function with uniformly bounded derivatives. Fix a small $\epsilon>0$, constants $C_1,C_2>0$, and choose $E_1,E_2$ and $\eta$ such that $2-C_1 N^{-2/3}\leq E_1<E_2\leq 2+C_2  N^{-2/3+\epsilon}$ and $N^{-1+\epsilon} \leq \eta \leq N^{-2/3-\epsilon}$.  Then for any small $\tau>0$, we have
	\begin{align}\label{green_difference}
		\Big| \big(\E-\E^{\mathrm{G \beta E}}\big)  \Big[ F\Big(N  \int_{E_1}^{E_2} \Im  m_N(x+\ii \eta) \dd x\Big) \Big]\Big| \leq  N^{-\frac{1}{3}+\tau},
	\end{align}
	for sufficiently large $N \geq N_0(C_1,C_2,\epsilon, \tau)$. The results hold true for both the real symmetric and complex Hermitian symmetry class. At the lower spectral edge the corresponding results also holds true.
\end{theorem}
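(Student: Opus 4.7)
The plan is to build on the proof of Proposition~\ref{GCT_mn}, which handles the special case $F = \mathrm{id}$, and extend it to general smooth $F$ with uniformly bounded derivatives. First I would introduce a continuous interpolating Ornstein--Uhlenbeck matrix flow $H_t = \ee^{-t/2} H + \sqrt{1 - \ee^{-t}}\, W$, with $W$ an independent G$\beta$E matrix, so that $H_0 = H$ and the law of $H_t$ converges to the G$\beta$E as $t\to\infty$. The theorem then reduces to a time-integrated estimate $\int_0^\infty \big|\tfrac{d}{dt} \E[F(\mathcal{I}_N(t))]\big|\, dt \leq N^{-1/3+\tau}$ for the observable $\mathcal{I}_N(t) \deq N \int_{E_1}^{E_2} \Im m_N^{(t)}(x + \ii \eta)\, dx$, since at $t=\infty$ the left side of~(\ref{green_difference}) vanishes.

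Next I would compute the time derivative using It\^{o}'s formula (equivalently, a cumulant expansion after discretizing the flow). The chain rule produces factors of $F'(\mathcal{I}_N(t))$ and $F''(\mathcal{I}_N(t))$, which are uniformly bounded by assumption, multiplied by first- and second-order derivatives of $\mathcal{I}_N(t)$ with respect to the independent entries $h_{ij}(t)$; these derivatives are in turn polynomials in the Green function entries $G_{ij}^{(t)}(z)$. Cumulants of order three and higher can be controlled via the moment bound~(\ref{moment_condition}) together with the local law~(\ref{le law local}), yielding a contribution of size at most $N^{-1/3+o(1)}$ as in~\cite{Schnelli+Xu}. The leading residual contribution, coming from the mismatch between the second moments of $H$ and those of the G$\beta$E, takes the schematic form
\begin{align*}
\sum_{i,j} \bigl(S_{ij} - \tfrac{1}{N}\bigr)\, \E\bigl[F'(\mathcal{I}_N(t))\,\Gamma_{ij}(t) + F''(\mathcal{I}_N(t))\,\Xi_{ij}(t)\bigr],
\end{align*}
where $\Gamma_{ij}$ and $\Xi_{ij}$ are bilinear expressions in the Green function entries.

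The main obstacle, and the novelty of the work, is to control this residual term, which does not vanish without the second moment matching used in~\cite{rigidity, sparse, Schnelli+Xu}. Following the new expansion mechanism announced in the introduction, I would iterate the cumulant expansion on $\Gamma_{ij}$ and $\Xi_{ij}$; each step produces either a higher-order cumulant contribution (controlled as above) or a sum against a further power of $S$. Such sums would then be evaluated using the spectral decomposition of $S$: the top eigenvector $\mathbf{1}/\sqrt{N}$ lies in the common kernel of $S - S^{\mathrm{G\beta E}}$, so that the corresponding component contributes $0$, while on the orthogonal complement the spectral gap~(\ref{S_spectrum}) yields a uniform contraction factor that keeps the recursion from blowing up. A Gr\"onwall-type self-consistent argument, applied to the family of averaged Green-function quantities generated during the iteration, then closes the accumulated error at the required scale $N^{-1/3+o(1)}$, uniformly over the time integral above and for the full range of spectral parameters $\eta\gtrsim N^{-1}$.

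Finally, passing from $F = \mathrm{id}$ (Proposition~\ref{GCT_mn}) to a general smooth $F$ amounts to carrying along the additional factors of $F^{(k)}(\mathcal{I}_N(t))$ that appear through the Leibniz rule when differentiating $F(\mathcal{I}_N(t))$ in the matrix entries. Since these derivatives are uniformly bounded and $\mathcal{I}_N(t)$ is itself of order one by the rigidity estimate~(\ref{rigidity_estimate}), each such factor is essentially harmless and can be carried as a multiplicative constant through the cumulant/Gr\"onwall scheme. Every term in the resulting expansion can then be bounded by the same mechanism developed for Proposition~\ref{GCT_mn}, up to absolute combinatorial constants, delivering the final bound~(\ref{green_difference}).
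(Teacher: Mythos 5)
Your high-level picture---interpolating flow, cumulant expansion, identification of the second-moment residual as the key new term, spectral gap of $S$ to iterate it away---is correct and matches the strategy announced in the introduction. However, there are three structural mismatches with the actual proof, two of which are genuine gaps rather than mere differences in taste.

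\textbf{Single-step vs.\ two-step interpolation.} You interpolate directly from $H$ to $\mathrm{G\beta E}$ in one flow. The paper instead uses two flows: $H^{(1)}(t)=\ee^{-t/2}\mathrm{G\beta E}+\sqrt{1-\ee^{-t}}\,W^{\wt S}$ (Gaussian throughout) and $H^{(2)}(t)=\ee^{-t/2}W^{S}+\sqrt{1-\ee^{-t}}\,H$ (second moments matched throughout). This separation is not cosmetic. The iterative expansion that decouples the special index from the $T$-weight (equations~\eqref{example_step_1}--\eqref{example_step_5} and Case~1/Case~2 of Subsection~\ref{subsec:novel_expansion}) relies on the fact that in Step~1 the interpolating matrix has \emph{Gaussian} entries, so each cumulant expansion produces only the second-order term and the nested recursion closes cleanly with the time-dependent doubly stochastic matrix $S(t)$. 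Along your single flow, $H_t$ has non-Gaussian entries for all finite $t$, so every one of the $O(\log N)$ iteration steps used to lift the $T$-power to $K$ also spawns third- and fourth-cumulant contributions, themselves unmatched terms requiring their own iterative analysis. You acknowledge these but wave them through with ``controlled as above''; with the nested combinatorics of degree $D$ and $T$-power $K\sim\log N$, this requires a genuine double induction that you would need to set up explicitly.

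\textbf{The role of the $F=\mathrm{id}$ case.} You correctly say you plan to build on Proposition~\ref{GCT_mn}, but your later description of a ``Gr\"onwall-type self-consistent argument'' for general $F$ misidentifies where Gr\"onwall enters. In the paper, Gr\"onwall is applied \emph{only} to the scalar $\E[\Im m_N^{(i)}(t,z)]$ (Propositions~\ref{lemma_step_1} and~\ref{lemma_step_2}), precisely because that quantity self-closes under the time derivative. For general $F$, one cannot Gr\"onwall $\E[F(\X(t))]$, because its time derivative is not controlled by itself. The actual mechanism is a \emph{bootstrap}: first establish the a priori bound $\E[\Im m_N^{(i)}(t,z)]\leq CN^{-1/3}$ uniformly in $t$ and $z\in\mathcal{S}_{\mathrm{edge}}$ via Gr\"onwall (Propositions~\ref{GCT_mn_1} and~\ref{GCT_mn_2}), then \emph{feed this bound in} to show $\big|\frac{\dd}{\dd t}\E[F(\X^{(i)}(t))]\big|=\ee^{-t}O_\prec(N^{-1/3})$ pointwise (equations~\eqref{green_difference_int},~\eqref{green_difference_int_2}), and finally integrate in $t$. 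Without first nailing down the a priori $N^{-1/3}$ bound on $\E[\Im m_N]$, the index-coincidence errors ($B_3$-type terms) in the iterative expansion for general $F$ have no better estimate than $O_\prec(1)$.

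\textbf{The $F$-derivatives are not multiplicative constants.} You claim the extra factors $F^{(k)}(\X)$ can be carried as harmless constants. That fails once you iterate: differentiating $F^{(\alpha)}(\X)$ with respect to $h_{ab}$ produces $F^{(\alpha+1)}(\X)\cdot\partial\X/\partial h_{ab}$ where $\partial\X/\partial h_{ab}=-\tfrac{2}{1+\delta_{ab}}\Dim G_{ba}$, i.e.\ a \emph{new Green function factor}, not a constant. The combinatorial bookkeeping needed to track products of several $\Dim$-blocks each attached to a different derivative order of $F$ is exactly what Definition~\ref{def_form_F} is for, and the expansion steps (a), (b1), (b2) of Section~\ref{subsec:step1} split into cases depending on whether the distinguished index sits inside a single $\Dim$-block or straddles two of them. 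This is a nontrivial extension of the $F=\mathrm{id}$ argument, not a trivial ``carry the constants'' step.
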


Theorem~\ref{thm_main_result} is now obtained as follows. Using \eqref{F_approx} and~(\ref{green_difference}) together with the corresponding estimates for the Gaussian ensembles in \cite[Theorems 1 and 2]{gaussian_speed}, we obtain
\begin{align}
	\sup_{ r > r_0 }\Big|\P \Big( N^{2/3} (\lambda_N-2) < r \Big) - \mathrm{TW}_{\beta}(r) \Big| \leq	N^{-\frac{1}{3}+\tau}+CN^{\frac{2}{3}} l,
\end{align}
with $l=N^{6\epsilon} \eta$, where the first error term is from the Green function comparison in (\ref{green_difference}) and the second error term stems from the approximation for the distribution of the largest eigenvalue in (\ref{F_approx}). We hence prove Theorem \ref{thm_main_result} by choosing $\eta=N^{-1+\epsilon}$ with a small fixed $\epsilon<\omega/7$. Detailed arguments can be found in \cite[Theorem 1.3]{Schnelli+Xu} to prove quantitative Tracy--Widom laws for Wigner matrices; see also~\cite[Section~5]{Bourgade extreme}.

 Our convergence rate estimate thus depends on the approximation in~\eqref{F_approx} and the comparison estimate in~\eqref{green_difference} which both give comparable contributions. Whether the convergence rate can further be improved by including a spectral shift of the edge, similar to the GOE in~\eqref{gaussian}, remains an open question. Such a spectral shift would depend on the spectral properties of the variance matrix~$S$ and the fourth order cumulants of the matrix entries of $H$. Even for Gaussian matrices with inhomogeneous variance profiles the optimal convergence rate is unknown.

 We conclude this subsection with two comments on the assumed properties of the variance matrix $S$ in Assumption~\ref{assumption_1}. The summation condition in~\eqref{sum_S} insures that the limiting eigenvalue distribution is given by the semicircle law. For non-doubly stochastic variance matrices the limiting eigenvalue distribution is obtained via solutions to the (vector) Dyson equation~\cite{AEK17}. The edge universality for these type of models were obtained in~\cite{XXX}. We expect that the methods in the present paper are sufficiently robust to also derive quantitative Tracy--Widom laws for matrix models with such general variance profiles. Second, the lower bound in~\eqref{flat} insures a spectral gap of order one in~\eqref{S_spectrum}. Random band matrices are a prominent class of matrix models for which the spectral gap closes as $N$ increases. For sufficiently large bandwidths the fluctuations of the extremal eigenvalues are still given by the Tracy--Widom laws~\cite{So1}. Establishing convergence rates estimates for these models remains a challenging open problem.

\subsection{Cumulant expansions}

We end this section by recalling the following cumulant expansion formula which is another key tool of this paper, see \eg Lemma 3.1 in \cite{moment} for reference and Lemma 7.1 in there for the complex version.
\begin{lemma}\label{cumulant}
	Let $h$ be a real-valued random variable with finite moments. The $p$-th cumulant of $h$ is given by
	\begin{align}\label{cumulant_k}
		c^{(p)}(h):=(-\ii)^{p} \frac{\dd^p}{\dd t^p}\Big( \log \E \ee^{\ii t h} \Big)\Big|_{t=0}.
	\end{align}
	Let $f: \R \longrightarrow \C$ be a smooth function that has bounded derivatives and denote by $f^{(p)}$ its $p$-th derivative. Then for any fixed $l \in \N$, we have
	\begin{align}\label{le first cumulant formula_real}
		\E \big[h f(h)\big]=\sum_{p+1=1}^l \frac{1}{p!} c^{(p+1)}(h)\E[ f^{(p)}(h) ]+R_{l+1}\,,
	\end{align}
	where the error term satisfies
	\begin{align}\label{cumulant_error_real}
		|R_{l+1}| \leq C_l \E \big[ |h|^{l+1}\big] \sup_{|x| \leq M} |f^{(l)}(x)| +C_l \E \big[ |h|^{l+1} 1_{|h|>M}\big] \sup_{x \in \R} |f^{(l)}(x)|,
	\end{align}
	with $M>0$ being an arbitrary fixed cutoff.
\end{lemma}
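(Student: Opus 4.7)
The plan is to derive~\eqref{le first cumulant formula_real} by combining a finite Taylor expansion of $f$ at $0$ with the classical moment--cumulant recurrence, and then to package all leftover errors into the two-term bound~\eqref{cumulant_error_real} via a cutoff at level~$M$.

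The starting ingredient is the recurrence
\begin{equation*}
  \E[h^{n+1}] = \sum_{k=0}^{n} \binom{n}{k}\, c^{(k+1)}(h)\, \E[h^{n-k}],
\end{equation*}
which I would obtain by applying Leibniz's rule to the identity $\phi_h'(t) = \psi_h'(t)\phi_h(t)$ at $t=0$, with $\phi_h(t) := \E[\ee^{\ii t h}]$ and $\psi_h(t) := \log \phi_h(t)$, together with $\phi_h^{(j)}(0) = \ii^j \E[h^j]$ and $\psi_h^{(j)}(0) = \ii^j c^{(j)}(h)$. Taylor's theorem then gives $f(h) = \sum_{n=0}^{l-1} \frac{h^n}{n!} f^{(n)}(0) + r_l(h)$. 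Multiplying by $h$, taking expectation, substituting the recurrence into each $\E[h^{n+1}]$, interchanging the finite sums and relabelling $(p,j) := (k, n-k)$ produces the main term
\begin{equation*}
  \sum_{p=0}^{l-1} \frac{c^{(p+1)}(h)}{p!} \sum_{j=0}^{l-1-p} \frac{f^{(p+j)}(0)}{j!}\, \E[h^j].
\end{equation*}
The inner sum is exactly the order-$(l-p)$ Taylor polynomial of $f^{(p)}$ at $0$ integrated against the law of $h$, so replacing it by $\E[f^{(p)}(h)]$ yields the target $\sum_{p=0}^{l-1} \frac{c^{(p+1)}(h)}{p!} \E[f^{(p)}(h)]$ at the cost of one further Taylor remainder involving $f^{(l)}$.

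It remains to bring all collected remainders into the form~\eqref{cumulant_error_real}. Every such remainder is of the shape $\E[|h|^{l+1}\, |f^{(l)}(\zeta)|]$ for some mean-value point $\zeta$ between $0$ and $h$ (or a strictly analogous expression arising from the Taylor remainders of the functions $f^{(p)}$). Splitting the expectation as $\{|h|\le M\}\cup\{|h|>M\}$ produces, on the bulk event, the factor $\sup_{|x|\le M}|f^{(l)}(x)|$ paired with $\E[|h|^{l+1}]$, and on the tail event the factor $\sup_{x\in\R}|f^{(l)}(x)|$ paired with $\E[|h|^{l+1}\one_{|h|>M}]$. Combined with the elementary estimate $|c^{(p+1)}(h)| \le C_l\, \E[|h|^{p+1}]$ that follows from the moment--cumulant inversion over set partitions, this yields exactly~\eqref{cumulant_error_real}.

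I expect the main obstacle to be the bookkeeping rather than any individual step: Taylor remainders arise at each order $p = 0,\ldots,l-1$ and must be aggregated so that the final estimate has the stated two-term shape rather than a proliferation of separate error terms indexed by $p$. The complex version~\eqref{le first cumulant formula_real} follows from the same scheme with $h$ and $\bar h$ treated via Wirtinger derivatives; see Lemma~7.1 in~\cite{moment} for this variant.
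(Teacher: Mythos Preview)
The paper does not give its own proof of this lemma; it states the result and refers the reader to Lemma~3.1 (and Lemma~7.1 for the complex version) in~\cite{moment}. Your proposal is a correct and standard derivation of the cumulant expansion via the moment--cumulant recurrence combined with a finite Taylor expansion of $f$, so there is nothing to compare against here.
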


 The usefulness of cumulant expansions in random matrix theory was recognized in~\cite{KKP} and has widely been used since, \eg~\cite{BoutetdeMonvel,ErdoesKruegerSchroeder,moment,He+Knowles,sparse,LP}. Iterative cumulant expansions due to unmatched indices~\cite{Erdos+Knowles+Yau,rigidity} were used in~\cite{He+Knowles,HLY,sparse} and systematically developed in~\cite{Schnelli+Xu,SXsample}.

\section{Special case of Theorem \ref{green_comparison}: $F(x)=x$ and proof strategy}\label{sec:toy}

Before we give the proof of the Green function comparison in Theorem \ref{green_comparison}, we consider the special case $F(x)=x$ and obtain the following proposition. The proof of Proposition \ref{GCT_mn} is then extended in Section~\ref{sec:proof} to prove Theorem \ref{green_comparison} for a general $F$. In fact, the improved estimate in (\ref{img_wigner}) below turns out to be a key input in the proof of Theorem~\ref{green_comparison} for a general $F$. 
\begin{proposition}\label{GCT_mn}
	Consider a generalized Wigner matrix satisfying Assumption \ref{assumption_1}.	Fix a small $\epsilon>0$ and constants $C_1,C_2>0$, define the domain of the spectral parameter $z$ near the upper edge as
	\begin{align}\label{S_edge}
		{\mathcal S}_{\mathrm{edge}} =& {\mathcal S}_{\mathrm{edge}}(\epsilon,C_1,C_2)\nonumber\\
		:=&\{ z=E+\ii \eta \in \mathcal{S}: -C_1 N^{-2/3} \leq E-2 \leq C_2  N^{-2/3+\epsilon}, N^{-1+\epsilon} \leq \eta \leq N^{-2/3-\epsilon}\}\,,
	\end{align}
	with $\mathcal{S}$ the spectral domain given in (\ref{ddd}). Then there exists $C>0$ depending on $C_1,C_2$ such that
	\begin{align}\label{img_wigner}
	 \E[\Im m_N(z)] \leq C N^{-1/3},
	\end{align}
	uniformly in $z \in {\mathcal S}_{\mathrm{edge}}(\epsilon,C_1,C_2)$, for sufficiently large $N \geq N_0(C_1,C_2,\epsilon)$.
\end{proposition}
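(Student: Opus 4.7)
The target bound is beyond what the local law in Theorem \ref{locallawgene} directly provides: at $\eta \sim N^{-1+\epsilon}$ it gives $|m_N(z)-m_{sc}(z)|\prec (N\eta)^{-1} = N^{-\epsilon}$, which exceeds $N^{-1/3}$ for small $\epsilon$. Since Lemma \ref{prop_msc} yields $|\Im m_{sc}(z)|\le C(C_1,C_2) N^{-1/3}$ on $\mathcal{S}_{\mathrm{edge}}$ (both inside the bulk, where $\sqrt{\kappa+\eta}\le CN^{-1/3}$, and outside, where $\eta/\sqrt{\kappa+\eta}\le\sqrt{\eta}\le N^{-1/3}$), it suffices to establish the refined expectation bound $|\E[m_N(z)]-m_{sc}(z)|\le CN^{-1/3}$. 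My plan is to obtain this via a Green function flow comparison to a Gaussian reference ensemble whose corresponding bound is explicitly known from \cite{gaussian_speed}, with errors controlled by iterated cumulant expansions and a Gr\"onwall argument that exploits the spectral gap of $S$ in~(\ref{S_spectrum}).

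\emph{Flow derivative via cumulant expansion.} Let $H_t$ be an Ornstein--Uhlenbeck flow that preserves the variance profile $S$, so that $H_0 = H$ and $H_t$ becomes Gaussian with the same profile as $t$ grows. Setting $f(t) := \E[\Im m_N(z; H_t)]$ and differentiating, Lemma \ref{cumulant} expresses $f'(t)$ as a sum over matrix entries of terms $c^{(p+1)}(h_{ik})\cdot\E[\partial_{ik}^p \Im G_{ki}(z;H_t)]$ for $p\ge 2$, since the second-cumulant contributions are invariant along the flow. Each differentiation produces products of Green function entries, controlled via Theorem \ref{locallawgene} and Ward-type identities. On $\mathcal{S}_{\mathrm{edge}}$ the plain third- and fourth-cumulant contributions are of size $O(N^{-1/2})$ and $O(N^{-1})$ per term respectively, so after summation and integration in $t$ they contribute $O(N^{-1/3})$, in analogy with the Wigner treatment of \cite{Schnelli+Xu}.

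\emph{Variance-profile resummation, Gr\"onwall, and Gaussian endpoint.} The genuinely new difficulty is that these differentiations generate $S$-weighted sums such as $\sum_k S_{ik}\E[\Im G_{ii}\cdot G_{kk}]$, for which the local law alone is too weak since $\eta$ lies well below the typical eigenvalue spacing $N^{-2/3}$. My plan is to iterate cumulant expansions inside these sums, at each step decomposing $S$ into its rank-one projection onto the constant eigenvector (eigenvalue $1$, producing a term that feeds back into $f(t)$) and the complementary contracting part (spectrum in $[-1+c_-,1-c_+]$ by~(\ref{S_spectrum})). Invertibility of $1-m_{sc}(z)^2 S$ on the orthogonal complement of the constant vector, uniform in $z\in\mathcal{S}_{\mathrm{edge}}$, is the algebraic mechanism that lets each iteration extract a genuine $N^{-\delta}$ improvement so that the hierarchy closes into a differential inequality of the schematic form $|f'(t)|\le CN^{-\delta} f(t) + CN^{-1/3}$. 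A Gr\"onwall argument on the flow then gives $f(0)\le (1+o(1))f(T) + CN^{-1/3}$ for a suitable $T$, and at the endpoint a short Lindeberg-type reduction compares $H_T$ to GUE/GOE, for which \cite{gaussian_speed} directly yields $f(T)\le CN^{-1/3}$. I expect these two components to correspond to the announced Propositions \ref{GCT_mn_1} and \ref{GCT_mn_2}.

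\emph{Main obstacle.} The hardest step is the $S$-resummation: without any second-moment matching to a Gaussian reference, the inhomogeneities of $S$ do not algebraically cancel and must be tracked through many iterations of cumulant expansion. The accounting has to show that each iteration produces a true $N^{-\delta}$ gain rather than merely reshuffling the error, and that the cumulative loss from the $e^{C t}$ factor in Gr\"onwall does not destroy the target scale $N^{-1/3}$. The spectral gap (\ref{S_spectrum}) is the only mechanism preventing the resummation from diverging, and identifying the correct ``resonant'' direction (the constant eigenvector of $S$) and separating it from the contracting directions at every level of expansion is, I expect, the main technical novelty of the argument.
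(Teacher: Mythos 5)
Your reduction to controlling $\E[\Im m_N]$ via a Gr\"onwall argument on a Green function flow is the right architecture, and you correctly identify both the flow comparison to a Gaussian reference and the spectral gap of $S$ in~(\ref{S_spectrum}) as the essential ingredients (the operator $1-m_{sc}^2 S$ on the complement of the constant vector is indeed the stability operator that the paper exploits, in the form of the geometric decay $\|(S-\Pi)^k\|\le(1-c_0)^k$). However, your plan misallocates where the spectral-gap resummation is actually needed, and this is a genuine gap.

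Your flow preserves the variance profile $S$ throughout, so by your own (correct) observation the second-cumulant contributions cancel along the flow and only third- and fourth-order cumulants appear. This is precisely the paper's Proposition~\ref{GCT_mn_2}/Proposition~\ref{lemma_step_2}, and for that step the spectral gap of $S$ is \emph{not} needed: the third-order (unmatched) terms are killed using only the doubly-stochastic identity $\sum_j S_{aj}=1$ together with the unmatched-index iteration of~\cite{Schnelli+Xu}, yielding $O_\prec(N^{-1/2})$, and the fourth-order terms are bounded by $o(1)\,\E[\Im m_N]$ via the generalized Ward identity. No $T^k$-resummation enters. The resummation machinery you describe is in fact required in a different place, one that you dismiss as ``a short Lindeberg-type reduction'': the comparison between the Gaussian matrix $W^S$ with profile $S$ and the invariant ensemble $\mathrm{G\beta E}$ with profile $1/N$. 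There the second moments do \emph{not} match entry by entry, so a Lindeberg swap leaves behind second-order terms proportional to $S_{ab}-1/N$ whose naive size is $O(\Im m_N/\eta)$ — far too big at $\eta\sim N^{-1+\epsilon}$. This is exactly where the paper's Proposition~\ref{GCT_mn_1} performs the iterated cumulant expansions that decouple the special index, raise the power of $T=S-\Pi$, and use the spectral gap to make $T^K$ negligible for $K\sim\log N$; the paper emphasizes that this is the genuinely new step (``new even for Gaussian matrices''). In short: the step you diagnose as hard (your flow, second moments matching) is the one the existing Schnelli--Xu machinery already handles, and the step you assume is a short endpoint reduction is where all of the new work — and your correctly-identified spectral-gap mechanism — actually lives. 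Your proposal as stated therefore has no proof for the endpoint bound $f(T)\le CN^{-1/3}$, which is the heart of the matter.

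Two smaller points. First, you aim for the stronger bound $|\E[m_N]-m_{sc}|\le CN^{-1/3}$ including the real part; the paper only ever controls $\E[\Im m_N]$, and it is not clear the real part admits the same control by these methods. Since you ultimately pivot to a Gr\"onwall argument directly on $\E[\Im m_N]$, this is harmless, but the claimed reduction is stronger than necessary and possibly not achievable. Second, after integration the third/fourth-order terms contribute $O(N^{-1/2})$ and $o(1)\E[\Im m_N]$, not $O(N^{-1/3})$; the $N^{-1/3}$ target is inherited entirely from the Gaussian endpoint via Gr\"onwall, which again underscores that the endpoint bound is where the work is.
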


We first claim the corresponding estimate for the Gaussian invariant ensembles relying on the explicit formula for the eigenvalue statistics~\cite{Mehta}.
\begin{lemma}\label{img_gaussian}
	Consider the Gaussian ensembles. Then there exists $C'>0$ depending on $C_1,C_2$ such that
		\begin{align}\label{img_gaussian_eq}
	\E^{\mathrm{G \beta E}}[\Im m_N(z)] \leq C' N^{-1/3}
\end{align}
uniformly in $z \in {\mathcal S}_{\mathrm{edge}}(\epsilon,C_1,C_2)$, for sufficiently large $N \geq N_0(C_1,C_2,\epsilon)$.
\end{lemma}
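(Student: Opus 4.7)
The plan is to leverage the explicit formulas for Gaussian ensembles. Starting from
\begin{equation*}
\E^{\mathrm{G \beta E}}[\Im m_N(z)] = \frac{1}{N}\int_\R \frac{\eta\,\rho_N^{(1)}(x)}{(x-E)^2+\eta^2}\,\dd x,
\end{equation*}
where $\rho_N^{(1)}(x) = \E[\sum_j \delta_{\lambda_j}(x)]$ is the one-point correlation function, I would bound $\rho_N^{(1)}$ uniformly using the Christoffel--Darboux (for $\beta=2$) or Pfaffian skew-orthogonal (for $\beta=1$) representations in \cite{Mehta}. Concretely, the Plancherel--Rotach asymptotics of Hermite polynomials yield the uniform estimate
\begin{equation*}
\rho_N^{(1)}(x) \leq C\,N\,\Im m_{sc}\big(x+\ii N^{-2/3}\big) \qquad \text{for all } x\in\R.
\end{equation*}
This can be checked region-by-region: near the edge $\rho_N^{(1)}$ is $O(N^{2/3})$, matching $\Im m_{sc}$ at height $N^{-2/3}$; in the bulk it tracks $N\rho_{sc}$; above the spectrum it decays super-exponentially so the inequality is trivial.

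Next I would invoke the Poisson semigroup identity on the upper half-plane:
\begin{equation*}
\int_\R \frac{\eta}{(x-E)^2+\eta^2}\,\Im m_{sc}(x+\ii \eta_0)\,\dd x \;=\; \pi\,\Im m_{sc}\big(E+\ii(\eta+\eta_0)\big),\qquad \eta,\eta_0>0,
\end{equation*}
which follows from harmonicity of $\Im m_{sc}$ on $\C^+$ together with its $L^1(\dd x)$ boundary values. Plugging the density bound into the first display and applying this identity with $\eta_0 = N^{-2/3}$ gives
\begin{equation*}
\E^{\mathrm{G \beta E}}[\Im m_N(z)] \leq C\pi\,\Im m_{sc}\big(E+\ii(\eta+N^{-2/3})\big).
\end{equation*}

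To finish, on $\mathcal{S}_{\mathrm{edge}}$ one has $\eta+N^{-2/3}\asymp N^{-2/3}$ and $\kappa\deq|E-2|\leq (C_1\vee C_2)N^{-2/3+\epsilon}$. A direct case analysis from Lemma~\ref{prop_msc} gives the desired bound uniformly, without any $N^\epsilon$ loss: for $E\leq 2$ (where $\kappa\leq C_1 N^{-2/3}$) one has $\Im m_{sc}\sim\sqrt{\kappa+\eta+N^{-2/3}}\lesssim N^{-1/3}$, while for $E>2$ one has $\Im m_{sc}\sim(\eta+N^{-2/3})/\sqrt{\kappa+\eta+N^{-2/3}}\leq \sqrt{\eta+N^{-2/3}}\lesssim N^{-1/3}$.

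The main obstacle is the uniform density bound on $\rho_N^{(1)}$. For GUE ($\beta=2$) it is a direct application of Plancherel--Rotach to the Hermite CD kernel. For GOE ($\beta=1$) the formula for $\rho_N^{(1)}$ involves an additional rank-one correction from the skew-orthogonal Hermite structure that must be controlled separately; the same bound ultimately holds via the known edge asymptotics of the orthogonal one-point function.
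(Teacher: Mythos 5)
Your approach aligns with the paper's own (omitted) proof: the authors likewise reduce the lemma to the explicit one-point correlation function of the Gaussian ensembles and invoke the uniform correlation-kernel asymptotics of Deift--Gioev \cite[Theorem 1.1]{Deift}, following the computations in \cite{SXsample} done there for Laguerre ensembles. Your Poisson-semigroup identity is a clean device for converting an envelope bound on $\rho_N^{(1)}$ into a statement about $\Im m_{sc}$ evaluated at the boosted height $\eta+N^{-2/3}$, and the final case analysis on ${\mathcal S}_{\mathrm{edge}}$ via Lemma~\ref{prop_msc} is correct, including the observation that for $E>2$ one can drop $\kappa$ from the denominator rather than lose an $N^{\epsilon}$. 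The one substantive ingredient you assert but do not prove is the uniform envelope bound $\rho_N^{(1)}(x)\leq C\,N\,\Im m_{sc}(x+\ii N^{-2/3})$ for all $x\in\R$, which must hold with controlled constants through the edge crossover and, for $\beta=1$, after accounting for the rank-one correction in the Pfaffian kernel; this is precisely the content of the uniform asymptotics in \cite{Deift}, and it is the same step the paper delegates to that reference, so your gap mirrors the paper's omission rather than constituting an error. To make the write-up self-contained one would extract the envelope bound from the uniform kernel asymptotics instead of asserting it region by region.
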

We remark that a slightly different estimate was obtained in \cite[Lemma 5.4]{Schnelli+Xu} for $z$ in a broader regime than (\ref{S_edge}). In fact, we can prove Lemma \ref{img_gaussian} using the uniform convergence for the correlation kernel \cite[Theorem 1.1]{Deift} of Gaussian ensembles and similar arguments in \cite[Lemma 6.1]{SXsample} applied for Laguerre ensembles. We omit the proof details for brevity.

Next, we  extend the estimate for the Gaussian ensembles in Lemma \ref{img_gaussian} to generalized Wigner matrices as stated in Proposition \ref{GCT_mn}. Before we give the formal proof of Proposition \ref{GCT_mn}, we first outline the proof strategy which will also be used to prove Theorem \ref{green_comparison} for a general $F$.

\subsection{Proof strategy}\label{sec:strategy}
We consider a generalized Wigner matrix with a given variance profile matrix $S$ satisfying the conditions in (\ref{sum_S}) and (\ref{flat}). To compare this matrix ensemble with the Gaussian invariant ensemble of the same symmetry class, we divide the comparison into two steps. 

 \subsubsection{Step one:} In the first step, we consider a generalized Wigner matrix with independent Gaussian entries, denoted by $W^{S}$, with the given variance profile matrix $S$.  We perform the comparison between the Gaussian matrix $W^{S}$ and the corresponding Gaussian invariant ensemble $\mathrm{G \beta E}$ which is independent of $W^{S}$, via the interpolating matrix flow
	$$	H^{(1)}(t)=\mathrm{e}^{-\frac{t}{2}}\mathrm{G \beta E} +\sqrt{1-\mathrm{e}^{-t}} W^{S}\,,\qquad  t \in \R^+.$$
	The corresponding Green function and its normalized trace are denoted by $G^{(1)}=G^{(1)}(t,z)$ and $m_N^{(1)}(t,z)$. Though the variances of the matrix entries  of $H^{(1)}(t)$ vary with time in general, the good news is that these matrix entries remain Gaussian distributed. Hence the higher (than the second) order cumulants vanish automatically. So this step is mainly to estimate the contributions to the time derivative of $\E[m_N^{(1)}(t,z)]$ from the inhomogeneous variances of the entries of $W^S$. More precisely, taking the time derivative of $\E[m_N^{(1)}(t,z)]$, we obtain 
	\begin{align}\label{strategy}
		\frac{\dd}{\dd t}\E[\Im m^{(1)}_N(t,z)]=\frac{\ee^{-t}}{N} \sum_{v,a,b}  \big(S_{ab}-\frac{1}{N}\big)\E \Big[\Im \big( G^{(1)}_{vb} G^{(1)}_{bv} G^{(1)}_{aa}+G^{(1)}_{va} G^{(1)}_{ab} G^{(1)}_{bv} \big)(t,z)\Big].
	\end{align}
Note that if $S_{ab} = N^{-1}$, then the right side will vanish trivially. With inhomogeneous variances, using the local law in Theorem \ref{locallawgene} and the {\it Ward identity}
\begin{align}\label{ward_id}
 \sum_{j=1}^N |G_{ij}|^2=\frac{1}{\eta} \Im G_{ii},
\end{align}
we can bound (\ref{strategy}) by
$$	\frac{\dd}{\dd t}\E[\Im m^{(1)}_N(t,z)] =  O_\prec\Big(\frac{\ee^{-t}\E[\Im m^{(1)}_N(t,z)]}{\eta}\Big),$$
which is far from the target estimate leading to (\ref{img_wigner}). Since the variances of the matrix entries are only of size $O(N^{-1})$, these second order terms in (\ref{strategy}) are considerably harder to estimate than the third and fourth order terms for non-Gaussian matrices, which will be considered in {\it Step two} below.

If the row and column indices of the Green function entries in (\ref{strategy}) were not related to the index $a$ (or~$b$) appearing in the coefficient $S_{ab}$, then these terms in (\ref{strategy}) would vanish trivially since $S$ is a doubly stochastic matrix. Hence, splitting $G^{(1)}_{aa}=m_{sc}+(G^{(1)}_{aa}-m_{sc})$ from the local law in~\eqref{le law local}, we get
\begin{align}\label{le step 1}
\frac{1}{N} \sum_{v,a,b}  \big(S_{ab}-\frac{1}{N}\big)\E\Big[\Im \big( G^{(1)}_{vb} G^{(1)}_{bv} G^{(1)}_{aa}\big)\Big]&=
\frac{1}{N} \sum_{v,a,b}  \big(S_{ab}-\frac{1}{N}\big)\Im \E\Big[m_{sc}G^{(1)}_{vb} G^{(1)}_{bv}+G^{(1)}_{vb} G^{(1)}_{bv} \big( G^{(1)}_{aa}-m_{sc}\big)\Big]\nonumber\\
&=
\frac{1}{N}\sum_{v,a,b}  \big(S_{ab}-\frac{1}{N}\big)\Im \E\Big[G^{(1)}_{vb} G^{(1)}_{bv} \big( G^{(1)}_{aa}-m_{sc}\big)\Big]\,.
\end{align}

Next, we can further decouple the index $a$ on the right side of~\eqref{le step 1} using cumulant expansions. In order to study such terms in general, we introduce in Subsection \ref{subsec:abstract_form} an abstract expansion mechanism for any term of the form in (\ref{form}) below, a product of the entries of the Green function $G$ and the entries of the variance profile matrix $S$. When we apply the cumulant expansions to such a term by expanding a Green function entry with an index $a$, we obtain from the inhomogeneous variances an additional coefficient factor $S_{ak}$ with $k$ being a fresh summation index, see \eg (\ref{expand_index_a_step2}) below. As the result, the leading term is then given by a product of Green function entries that are free of the index $a$ (\ie with $a$ being replaced by the fresh index $k$) with the coefficient $S_{ab}S_{ak}$. Summing over $a$, the coefficient $S_{ab}S_{ak}$ is then given by $(S^2)_{kb}$. For example, we have (ignoring some irrelevant factor)
\begin{align}\label{strategy_term}
	&\frac{1}{N} \sum_{v,a,b}  \big(S_{ab}-\frac{1}{N}\big)\Im\E \Big[ G^{(1)}_{vb} G^{(1)}_{bv} \big(G^{(1)}_{aa}-m_{sc}\big)\Big]\nonumber\\
	&=\frac{1}{N} \sum_{v,b,k}  \big((S^2)_{kb}-\frac{1}{N}\big)\Im\E \Big[ G^{(1)}_{vb} G^{(1)}_{bv} \big(G^{(1)}_{kk}-m_{sc}\big)\Big]+\mbox{sub-leading terms}.
\end{align}
Note that the fresh index $k$ plays the same role as the original index $a$ and we can further expand this leading term to get even higher powers of $S$. Using the spectral property of $S$ in (\ref{S_spectrum}), $(S^k)_{ab}$ tends to be very close to $N^{-1}$ with sufficiently large exponents $k \sim \log N$. Hence the leading term in~(\ref{strategy_term}) will almost vanish and we end up with $O(\log N)$ subleading terms consisting of more off-diagonal Green function entries. Iteratively expanding these subleading terms in combination with the generalized Ward identity in~(\ref{ward_1})-(\ref{ward_2}) below, the second order terms in~(\ref{strategy}) can be bounded effectively using $\E[\Im m^{(1)}_N(t,z)]$, \ie
	\begin{align}\label{green_difference_2_step3}
	\Big|\frac{\dd \E[\Im m^{(1)}_N(t,z)]}{\dd t}\Big| \leq \ee^{-t} \Big( o(1) 
	\E[\Im m^{(1)}_N(t,z)]+ o\big(N^{-1/3}\big) \Big).
\end{align}

	Then using Gr\"onwall's inequality, we extend the estimate in (\ref{img_gaussian_eq}) for the initial Gaussian invariant ensemble to the terminal matrix ensemble $W^{S}$, \ie for any $t\in \R^+$,
	\begin{align}\label{green_difference_2_step2}
		\E[\Im m^{(1)}_N(t,z)] \leq \E[\Im m^{(1)}_N(0,z)] \exp\Big( o(1) \int_{0}^t  \ee^{-s}  \dd s \Big)+o\big(N^{-\frac{1}{3}}\big)=O(N^{-1/3}).
	\end{align}
We remark that getting the $o(1)$ factor in front of $\E[\Im m^{(1)}_N(t,z)]$ in (\ref{green_difference_2_step3}) is essential to apply Gr\"onwall's inequality in (\ref{green_difference_2_step2}).

\subsubsection{Step two:}	In the second step, we compare the Gaussian matrix~$W^{S}$ considered in {\it Step one} with a generic random matrix $H$ with the same variance profile matrix $S$, via the interpolating matrix flow
	$$	H^{(2)}(t)=\mathrm{e}^{-\frac{t}{2}} W^{S} +\sqrt{1-\mathrm{e}^{-t}}H \,,\qquad  t \in \R^+.$$
	Since the first two moments of the matrix entries of both $W^{S}$ and $H$ are the same,
	 this step is to estimate the contributions to the time derivative of $ \E[\Im m^{(2)}_N(t,z)]$ from higher order (\ie  third and fourth order) moments of the matrix entries of $H$; see (\ref{sde_im}) below. The proof of this step is in the same spirit of \cite[Theorem 1.4]{Schnelli+Xu} for Wigner matrices. The only difference is that the variances of matrix entries are no longer identical and we need to extend the arguments in \cite{Schnelli+Xu} to the inhomogeneous cases.

	More precisely, for the third order terms in (\ref{sde_im}) with the so-called unmatched indices (see Definition~\ref{unmatch_def}), we adapt the expansion mechanism in \cite[Section 6]{Schnelli+Xu} to generalized Wigner matrices with inhomogeneous variances. By performing the expansions iteratively, we show that these third order terms with unmatched indices can be bounded by $O(N^{-1/2})$. Moreover, using the generalized Ward identity in~(\ref{ward_1})-(\ref{ward_2}), the remaining fourth order terms in (\ref{sde_im}) can be bounded by $o(1)\E[\Im m^{(2)}_N(t,z)]$ effectively. Therefore, we obtain
		\begin{align}\label{green_difference_2_step4}
		\Big|\frac{\dd \E[\Im m^{(2)}_N(t,z)]}{\dd t}\Big| \leq \ee^{-t} \Big( o(1) 
		\E[\Im m^{(1)}_N(t,z)]+ O\big(N^{-1/2}\big) \Big).
	\end{align}
	Then using Gr\"onwall's inequality as in (\ref{green_difference_2_step2}), we can extend the estimate for the initial Gaussian matrix~$W^{S}$ obtained in (\ref{green_difference_2_step2}) to the generalized Wigner matrix~$H$. This Gr\"onwall argument shortens our proof compared to the recursive comparison arguments used in \cite[Section 5]{Schnelli+Xu}.

\subsection{Proof of Proposition \ref{GCT_mn}}\label{sec:proof_prop}
To present the proof, we will consider only real symmetric generalized Wigner matrices for notational simplicity, though the real cases are theoretically heavier than the complex cases.

\begin{proof}[Proof of Proposition \ref{GCT_mn}]

 As explained in Section \ref{sec:strategy}, the proof is divided into two steps. 
 
 In the first step, we perform the comparison between the GOE matrix estimated in Lemma \ref{img_gaussian} and an independent generalized Wigner matrix $W^{S}$ with Gaussian entries and a given variance profile matrix $S$ satisfying (\ref{sum_S}) and (\ref{flat}). To accommodate the $\frac{1}{1+\delta_{ab}}$ factor in the differentiation rule in (\ref{dH}) below for the real case, we instead replace $W^{S}$ with a slightly different Gaussian matrix ${W}^{\wt S}$ whose variance profile matrix is given by
 \begin{align}\label{tilde_S}
 	(\wt S)_{ab} =(S)_{ab} (1+\delta_{ab}).
 \end{align}
We remark that in the complex case this step is not necessary. We claim that such modifications on the variances of diagonal entries does not influence the statement in Proposition \ref{GCT_mn}.
 \begin{lemma}\label{lemma_third_step}
 	We will use $\E^{W^{S}}$ and $\E^{W^{\wt S}}$ to denoted the corresponding expectation for the Gaussian matrix with variance profile matrix $S$ and $\wt S$ respectively. Then $\E^{W^{\wt S}}[\Im m_N(z)]=O(N^{-1/3})$ will imply 
 	\begin{align}\label{will_prove}
 		\big| \E^{W^{ S}}[\Im m_N(z)] \big| =O(N^{-1/3}),
 	\end{align}
 and vice versa.
 \end{lemma}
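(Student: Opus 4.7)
The key observation is that $W^S$ and $W^{\wt S}$ share all off-diagonal distributions, differing only in the variance of their diagonal entries: $\mathrm{Var}(W^{\wt S}_{aa})=2S_{aa}$ while $\mathrm{Var}(W^S_{aa})=S_{aa}$. Consequently, letting $Z$ be an independent diagonal Gaussian matrix with $Z_{aa}\sim \bs{N}(0,S_{aa})$, we have $W^{\wt S}\stackrel{\mathrm{d}}{=}W^S+Z$. The plan is to use the simple Gaussian interpolation
$$
H(t):=W^S+\sqrt{t}\,Z,\qquad t\in[0,1],
$$
so that $H(0)=W^S$ and $H(1)\stackrel{\mathrm{d}}{=}W^{\wt S}$. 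The variance profile of $H(t)$ coincides with $S$ off-diagonal and equals $(1+t)S_{aa}$ on the diagonal, hence its row sums are $1+tS_{aa}=1+O(N^{-1})$, a harmless deviation from the doubly-stochastic condition so that the local law of Theorem~\ref{locallawgene} applies uniformly in $t\in[0,1]$.

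Next I would take the time derivative, using $\frac{\dd H}{\dd t}=\frac{Z}{2\sqrt{t}}$ and $\frac{\dd G}{\dd t}=-G\frac{\dd H}{\dd t}G$, to obtain $\frac{\dd}{\dd t}\E[m_N]=-\frac{1}{2\sqrt{t}\,N}\sum_a \E[Z_{aa}(G^2)_{aa}]$. Gaussian integration by parts applied to each independent $Z_{aa}$, together with $\partial_{Z_{aa}}(G^2)_{aa}=-2\sqrt{t}\,G_{aa}(G^2)_{aa}$ coming from $\partial_{Z_{aa}}H=\sqrt{t}\,E_{aa}$, cancels the factor of $\sqrt{t}$ and produces the clean identity
$$
\frac{\dd}{\dd t}\E[m_N(H(t),z)]=\frac{1}{N}\sum_a S_{aa}\,\E\big[G_{aa}(G^2)_{aa}\big].
$$
Combining $|G_{aa}|\prec 1$ from the local law, the Ward identity $|(G^2)_{aa}|\le \sum_b|G_{ab}|^2=\Im G_{aa}/\eta$, and $S_{aa}=O(N^{-1})$, this yields $|\dd \E[\Im m_N]/\dd t|\prec \E[\Im m_N]/(N\eta)$. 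Since $N\eta\ge N^\epsilon$ on $\mathcal{S}_{\mathrm{edge}}$, a Gr\"onwall argument on $t\in[0,1]$ gives $\E^{W^{\wt S}}[\Im m_N(z)]=(1+O(N^{-\epsilon}))\,\E^{W^S}[\Im m_N(z)]$, so either one is $O(N^{-1/3})$ if and only if the other is.

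The only mildly delicate point will be verifying that the local law of Theorem~\ref{locallawgene} applies uniformly along the interpolation, since $H(t)$ is not exactly doubly-stochastic. However, the deviation is only an $O(N^{-1})$ shift on the diagonal, and both the spectral gap condition~\eqref{S_spectrum} and the flatness bounds~\eqref{flat} are stable under such perturbations; no new ingredients beyond a trivial modification of the hypotheses are required. All other steps are routine Gaussian cumulant expansions combined with the Ward identity.
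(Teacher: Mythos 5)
Your proposal is correct and follows essentially the same route as the paper: Gaussian interpolation between $W^S$ and $W^{\widetilde S}$, integration by parts in the diagonal Gaussian variables, the Ward identity and the local law giving $|\dd\E[\Im m_N]/\dd t|\prec \E[\Im m_N]/(N\eta)$, and then a Gr\"onwall argument. The paper uses the exponential flow $\mathrm{e}^{-t/2}W^{\widetilde S}+\sqrt{1-\mathrm{e}^{-t}}\,W^{S}$ and lets the cancellation $S_{ab}-\widetilde S_{ab}=0$ for $a\ne b$ do the work, whereas you isolate the diagonal Gaussian increment $Z$ explicitly on the finite interval $[0,1]$, but these are cosmetic reparametrizations of the same argument; you are also slightly more careful than the paper in flagging the $O(N^{-1})$ departure from the doubly-stochastic row sums along the interpolation, which both treatments (reasonably) absorb into the robustness of the local law.
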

The proof of Lemma \ref{lemma_third_step} is postponed to the appendix. We then consider the following modified matrix interpolating flow
\begin{equation}\label{sum_2}
	H^{(1)}(t):=\mathrm{e}^{-\frac{t}{2}}W +\sqrt{1-\mathrm{e}^{-t}} W^{\wt S}\,,\qquad  t \in \R^+,
\end{equation}
where $W$ is the standard GOE matrix which is independent of $W^{\wt S}$. The Green function of $H^{(1)}(t)$ and its normalized trace are denoted by $G^{(1)} = G^{(1)}(t,z)$ and $m_N^{(1)}(t,z)$. We will use a Gr\"onwall argument in Section \ref{sec:step1} to show that $\E[\Im m^{(1)}_N(t, z)]$ has a similar upper bound as the initial Gaussian invariant ensemble in (\ref{img_gaussian_eq}).

\begin{proposition}\label{GCT_mn_1}
There exists $C>0$ depending on $C_1,C_2$ such that
	\begin{align}\label{img_wigner_1}
		\E[\Im m^{(1)}_N(t, z)] \leq C N^{-1/3}.
	\end{align}
	uniformly in $z \in {\mathcal S}_{\mathrm{edge}}(\epsilon,C_1,C_2)$ in (\ref{S_edge}) and $t \geq 0$, for sufficiently large $N \geq N'_0(C_1,C_2,\epsilon)$. In particular, we have $\E^{W^{\wt S}}[\Im m_N(z)]=O(N^{-1/3})$, which will further imply that
	\begin{align}\label{middle}
		\E^{W^{S}}[\Im m_N(z)]=O(N^{-1/3}).
	\end{align}
\end{proposition}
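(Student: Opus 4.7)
The plan is to set up a differential inequality for $\E[\Im m_N^{(1)}(t,z)]$ starting from the explicit identity (\ref{strategy}) — which by Gaussianity of both endpoints only involves the second-order cumulant — and conclude via a Grönwall argument anchored at $t=0$ by Lemma \ref{img_gaussian}. Concretely, the target estimate is
\begin{equation*}
\Big| \tfrac{\dd}{\dd t} \E[\Im m_N^{(1)}(t,z)] \Big| \;\leq\; \ee^{-t}\Bigl( o(1)\,\E[\Im m_N^{(1)}(t,z)] + o(N^{-1/3}) \Bigr),
\end{equation*}
which then yields $\E[\Im m_N^{(1)}(t,z)] \leq C N^{-1/3}$ uniformly in $t \geq 0$. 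Passing $t \to \infty$ gives $\E^{W^{\wt S}}[\Im m_N(z)] = O(N^{-1/3})$, and (\ref{middle}) follows from Lemma \ref{lemma_third_step}.

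The first reduction is to expand $G^{(1)}_{aa} = m_{sc} + (G^{(1)}_{aa} - m_{sc})$ in (\ref{strategy}); the constant piece drops because $\sum_b (S_{ab} - N^{-1}) = 0$, leaving the structure displayed in (\ref{le step 1}). A naive application of the local law (\ref{le law local}) only loses a factor $(N\eta)^{-1/2}$, which combined with the Ward identity (\ref{ward_id}) would yield a constant (not $o(1)$) prefactor in front of $\E[\Im m_N^{(1)}]$, insufficient for Grönwall. The key step, and the main obstacle, is therefore to decouple the index $a$ by applying a cumulant expansion to $(G^{(1)}_{aa} - m_{sc})$: differentiating along an entry $h^{(1)}_{ak}$ produces the second-cumulant factor $\wt S_{ak}$, so that after summation over $a$ the composite coefficient $(S_{ab} - N^{-1})\wt S_{ak}$ aggregates into an entry of $S^2 - \tfrac{1}{N}\mathbf{1}\mathbf{1}^T$ (up to harmless diagonal corrections from the discrepancy between $S$ and $\wt S$), with the fresh index $k$ playing the structural role previously played by $a$. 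Iterating this expansion $k \sim \log N$ times replaces the coefficient by an entry of $S^k - \tfrac{1}{N}\mathbf{1}\mathbf{1}^T$, and the spectral gap (\ref{S_spectrum}) ensures geometric decay $\|S^k - \tfrac{1}{N}\mathbf{1}\mathbf{1}^T\|_{\max} \leq (1 - c)^k$, making the principal term at depth $\log N$ smaller than any polynomial in $N^{-1}$. The subleading terms generated at each round contain additional off-diagonal Green function entries but free of the summation index $a$; we close these using the Ward identity (\ref{ward_id}) together with its generalized form to rewrite $\sum_j |G_{ij}|^2$ contributions as $\eta^{-1} \Im G_{ii}$, producing precisely the $\E[\Im m_N^{(1)}]$ structure with an arbitrarily small prefactor. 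To organize this iteration systematically, I would invoke the abstract expansion mechanism for products of Green function entries weighted by entries of $S$ developed in Subsection \ref{subsec:abstract_form}.

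The main difficulty is bookkeeping of the iterative expansion: each round yields $O(\log N)$ subleading terms of new structural types, and one must verify that the Ward-identity closure applies uniformly across them and that the prefactors, after $\log N$ rounds, still combine into a genuinely $o(1)$ coefficient of $\E[\Im m_N^{(1)}(t,z)]$ plus a $o(N^{-1/3})$ remainder. The spectral gap of $S$ is essential here — without the contraction $(1-c)^k$ the iterated leading term would not be suppressed. Once the differential inequality is established, Grönwall applied against the $\ee^{-t}$ integrable weight preserves the $O(N^{-1/3})$ bound uniformly in $t \geq 0$, completing the proof of (\ref{img_wigner_1}) and hence (\ref{middle}) via Lemma \ref{lemma_third_step}.
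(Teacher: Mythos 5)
Your proposal matches the paper's proof essentially step by step: the reduction to a differential inequality (Proposition \ref{lemma_step_1}), the expansion $G^{(1)}_{aa}=m_{sc}+(G^{(1)}_{aa}-m_{sc})$ to exploit $\sum_b(S_{ab}-N^{-1})=0$, the iterative cumulant expansion raising the power of $T=S-\Pi$ until the spectral gap kills the leading term after $\sim\log N$ rounds, the Ward-identity closure of subleading terms, the Gr\"onwall step anchored by Lemma \ref{img_gaussian}, and the final transfer from $W^{\wt S}$ to $W^{S}$ via Lemma \ref{lemma_third_step}. The only cosmetic differences are that the paper replaces the $t\to\infty$ limit by a finite cutoff $T_0=10\log N$ plus the perturbation bound (\ref{approxxxx}), and your description of the naive bound as a ``constant'' prefactor understates it (it is actually $\eta^{-1}\gg 1$), but neither affects the argument.
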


In the second step, we perform the comparison between the generalized Gaussian matrix $W^{S}$ with the original variance profile $S$ and an independent generalized Wigner matrix $H$ with the same variance profile via the interpolating flow
\begin{equation}\label{sum_1}
	H^{(2)}(t):=\mathrm{e}^{-\frac{t}{2}}W^{S}  +\sqrt{1-\mathrm{e}^{-t}}H \,,\qquad  t \in \R^+.
\end{equation}
Using a Gr\"onwall argument, we will show in Section \ref{sec:step2} that $\E[\Im m^{(2)}_N(t, z)]$ has a similar upper bound as the initial generalized Gaussian matrix $W^S$ estimated in (\ref{middle}).
\begin{proposition}\label{GCT_mn_2}
	There exists $C'>0$ depending on $C_1,C_2$ such that
	\begin{align}\label{img_wigner_2}
		\E[\Im m^{(2)}_N(t, z)] \leq C' N^{-1/3},
	\end{align}
	uniformly in $z \in {\mathcal S}_{\mathrm{edge}}(\epsilon,C_1,C_2)$ in (\ref{S_edge}) and $t \geq 0$, for sufficiently large $N \geq N'_0(C_1,C_2,\epsilon)$. In particular, for the generalized Wigner matrix $H$ in (\ref{sum_1}), we have
		\begin{align}
		\E^{H}[\Im m_N(z)]=O(N^{-1/3}).
	\end{align}
\end{proposition}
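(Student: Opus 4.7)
\emph{Setup and cumulant expansion.} The plan is to derive a Gronwall-type differential inequality for $\E[\Im m_N^{(2)}(t,z)]$ along the flow $H^{(2)}(t) = e^{-t/2}W^S + \sqrt{1-e^{-t}}\,H$. Differentiating in $t$, I would apply Gaussian integration by parts to the $W^S$-contribution and the cumulant expansion of Lemma~\ref{cumulant} to the $H$-contribution. Because $\E|W^S_{ab}|^2 = \E|H_{ab}|^2 = S_{ab}$, the second-cumulant contributions from the two sides cancel exactly, leaving only cumulants of order $\geq 3$ in an expression of the schematic form
\[
\frac{\dd}{\dd t}\E[\Im m_N^{(2)}(t,z)] = \sum_{p\geq 2}\sum_{a\leq b}  e^{-t}(1-e^{-t})^{(p-1)/2}\,\frac{\kappa^{(p+1)}(H_{ab})}{p!}\,\E\bigl[\partial^{p+1}_{(ab)} \Im m_N^{(2)}(t,z)\bigr] + \text{error},
\]
where $\partial_{(ab)}$ denotes differentiation in the $(a,b)$-entry of $H^{(2)}(t)$. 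The moment bound~\eqref{moment_condition} gives $\kappa^{(p+1)}(H_{ab}) = O(N^{-(p+1)/2})$, so all terms with $p \geq 4$ contribute at most $O(N^{-1/2})$; the real work lies in the third- ($p=2$) and fourth-cumulant ($p=3$) terms.

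\emph{Fourth-cumulant terms.} Here $\kappa^{(4)}(H_{ab}) = O(N^{-2})$, and $\partial^4_{(ab)}\Im m_N^{(2)}$ unfolds into a sum of products of five Green function entries. I would use the local law~\eqref{le law local} to replace diagonal factors by $m_{sc}$ up to a small error, and the Ward identity~\eqref{ward_id} repeatedly on off-diagonal factors to convert pairs $\sum_a|G_{ia}|^2 = \eta^{-1}\Im G_{ii}$, thereby producing an additional $\Im m_N^{(2)}$ factor after summation. This bounds the total fourth-cumulant contribution by $o(1)\cdot \E[\Im m_N^{(2)}(t,z)]$, paralleling \cite[Section~5]{Schnelli+Xu}. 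The inhomogeneity enters only through $\sum_{a\leq b}S_{ab} = N$, which plays precisely the same combinatorial role as $\sum_{a\leq b}N^{-1}= N$ in the Wigner case, so no new difficulty appears.

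\emph{Third-cumulant terms.} This is the main obstacle. Here $\kappa^{(3)}(H_{ab}) = O(N^{-3/2})$ and $\partial^3_{(ab)}\Im m_N^{(2)}$ produces products of Green function entries with \emph{unmatched} indices $a$ and $b$ (each appearing an odd number of times among the entries); a direct application of the local law yields only a polynomial-order bound, which is insufficient. To reach the target $O(N^{-1/2})$ I would adapt the iterative cumulant-expansion scheme of \cite[Section~6]{Schnelli+Xu}, successively expanding unmatched indices until enough smallness is extracted. The new feature, relative to the homogeneous case, is that each expansion step now generates a factor $S_{jk}$ in place of $N^{-1}$; however, $\sum_a S_{ab}=1$ together with the uniform bound $NS_{ab}\sim 1$ from \eqref{flat} implies that, after each summation, the emerging products of $S$-entries behave essentially like powers of $N^{-1}$, so the power-counting of \cite{Schnelli+Xu} survives up to constants. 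The outcome is that the third-cumulant contribution is also $O(N^{-1/2})$. Note that this step is substantially easier than the corresponding argument in Step~1: we already have the intrinsic smallness $\kappa^{(3)}_{ab} = O(N^{-3/2})$ available, rather than having to extract it from $S_{ab}-N^{-1}$ via the spectral gap~\eqref{S_spectrum}.

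\emph{Conclusion via Gronwall.} Combining the two bounds yields the differential inequality
\[
\Bigl|\frac{\dd}{\dd t}\E[\Im m_N^{(2)}(t,z)]\Bigr| \leq e^{-t}\bigl(o(1)\cdot \E[\Im m_N^{(2)}(t,z)] + O(N^{-1/2})\bigr),
\]
uniformly in $z \in \mathcal{S}_{\mathrm{edge}}$. Integrating by Gronwall's inequality with the initial value $\E[\Im m_N^{(2)}(0,z)] = \E^{W^S}[\Im m_N(z)] = O(N^{-1/3})$ from Proposition~\ref{GCT_mn_1} gives $\E[\Im m_N^{(2)}(t,z)] \leq C'N^{-1/3}$ for all $t \geq 0$. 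Sending $t \to \infty$ (where $H^{(2)}(t) \to H$) yields $\E^{H}[\Im m_N(z)] = O(N^{-1/3})$, completing the proof of Proposition~\ref{GCT_mn_2}.
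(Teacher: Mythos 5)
Your proposal matches the paper's proof of Proposition~\ref{GCT_mn_2} in essentially every respect: the paper's Section~\ref{sec:step2} likewise differentiates $\E[\Im m_N^{(2)}(t,z)]$ along the flow, observes that the second-order cumulant contributions cancel, bounds the fourth-order ($k+1=4$) terms by $O_\prec(\E[\Im m_N^{(2)}]/(N\eta))$ via the generalized Ward identity~(\ref{ward_1})--(\ref{ward_2}), bounds the third-order unmatched terms by $O_\prec(N^{-1/2})$ via the iterated-expansion machinery adapted from \cite[Section~6]{Schnelli+Xu} (Proposition~\ref{unmatch_lemma}), and then closes with Gr\"onwall's inequality starting from the estimate (\ref{middle}) for $W^S$. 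Your remark that this step is easier than Step~1 because the smallness $\kappa^{(3)}=O(N^{-3/2})$ is already intrinsic, rather than having to be extracted from $S_{ab}-N^{-1}$ via the spectral gap, is also precisely the paper's point.
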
 
 Hence we finish the proof of Proposition \ref{GCT_mn}
\end{proof}

\section{From GOE to Gaussian matrix with a variance profile: Proof of Proposition \ref{GCT_mn_1}}\label{sec:step1}
Given the matrix flow $H^{(1)}(t)$ in (\ref{sum_2}), in order to prove Proposition \ref{GCT_mn_1}, it suffices to prove the following lemma on the time derivative of $\E[m_N^{(1)}(t,z)]$.
\begin{proposition}\label{lemma_step_1}
	For any $t\geq 0$ and $z \in {\mathcal S}_{\mathrm{edge}}$, we have
	$$\Big|\frac{\dd \E[\Im m^{(1)}_N(t,z)]}{\dd t}\Big| \prec  \ee^{-t}\Big(
	N^{-\epsilon/4}\E[\Im m^{(1)}_N(t,z)]+N^{-1/3-\epsilon/4}\Big).$$
\end{proposition}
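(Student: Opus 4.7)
The plan is to compute $\dd\E[\Im m_N^{(1)}(t,z)]/\dd t$ by Gaussian integration by parts and then to exploit both the doubly stochastic structure of $S$ and the spectral gap~\eqref{S_spectrum} through an iterated expansion.

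\emph{Step 1 (Derivative formula).} Differentiating $H^{(1)}(t)$ from~\eqref{sum_2} in $t$ and applying Gaussian integration by parts to each independent entry of $W$ and $W^{\wt S}$, only second-order cumulants survive (since both ensembles are Gaussian), and the two contributions combine to produce the weight $\ee^{-t}(S_{ab}-N^{-1})$. The definition $\wt S_{aa}=2S_{aa}$ in~\eqref{tilde_S} is arranged exactly to cancel the $(1+\delta_{ab})^{-1}$ factor from the symmetric differentiation rule. This reproduces the identity~\eqref{strategy}.

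\emph{Step 2 (Doubly stochastic cancellation).} In the first summand of~\eqref{strategy} I split $G^{(1)}_{aa}=m_{sc}+(G^{(1)}_{aa}-m_{sc})$; the $m_{sc}$ piece vanishes because $\sum_a(S_{ab}-N^{-1})=0$. The second summand $G^{(1)}_{va}G^{(1)}_{ab}G^{(1)}_{bv}$ generically carries two off-diagonal factors, and its sum over $v,a,b$ is collapsed by the Ward identity~\eqref{ward_id} and the local law~\eqref{le law local} to a bound of the desired form.

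\emph{Step 3 (Iterated expansion).} To handle the remaining
\[
\frac{\ee^{-t}}{N}\sum_{v,a,b}(S_{ab}-N^{-1})\,\Im\E\bigl[G^{(1)}_{vb}G^{(1)}_{bv}(G^{(1)}_{aa}-m_{sc})\bigr],
\]
I perform an iterated Gaussian integration by parts on the index $a$. Each expansion of an $a$-indexed Green function entry brings a fresh summation index $k$ weighted by $\wt S_{ak}$; summing over $a$ right-multiplies the outer coefficient $(S-N^{-1}\mathbf{1}\mathbf{1}^\top)$ by (essentially) $S$. Iterating $M\sim\lceil\epsilon^{-1}\log N\rceil$ times and invoking~\eqref{S_spectrum} leaves a leading coefficient of size $O(N^{-D})$ for any prescribed $D$, rendering it negligible. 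Each step of the expansion also generates subleading terms containing strictly more off-diagonal entries of $G^{(1)}$; these are collapsed by iterated Ward contractions into multiples of $\Im m_N^{(1)}(t,z)$ carrying a small prefactor coming from the structure of the $(S-N^{-1})$-type coefficients. Collecting the contributions yields
\[
\Bigl|\frac{\dd\E[\Im m_N^{(1)}(t,z)]}{\dd t}\Bigr|\prec\ee^{-t}\bigl(N^{-\epsilon/4}\E[\Im m_N^{(1)}(t,z)]+N^{-1/3-\epsilon/4}\bigr).
\]

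The hard part is the bookkeeping of the iterated expansion: at each step several distinct subleading terms appear, and one must verify that every Ward contraction (which costs a factor $\eta^{-1}$) is compensated by a matching $\Im m_N^{(1)}$ and that the coefficient accumulated from the iterated $S$-sums shrinks rapidly enough to absorb the $O(\log N)$ iteration count without losing a factor $\eta^{-1}\sim N^{1-\epsilon}$ anywhere. Encoding this combinatorics in a uniform way is precisely the purpose of the abstract expansion mechanism for products of Green function entries and $S$-entries outlined in Subsection~\ref{sec:strategy}.
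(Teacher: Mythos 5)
Your Steps~1 and~3 track the paper's approach, but Step~2 contains a genuine gap: the claim that the second summand
\[
\frac{\ee^{-t}}{N}\sum_{v,a,b}\bigl(S_{ab}-\tfrac1N\bigr)\,\Im\E\bigl[G^{(1)}_{va}G^{(1)}_{ab}G^{(1)}_{bv}\bigr]
\]
``is collapsed by the Ward identity and the local law to a bound of the desired form'' is false. First, for generic distinct $v,a,b$ this term carries \emph{three} off-diagonal entries, not two. Second, and more importantly, a direct Ward/local-law estimate comes nowhere near the target. Writing $\sum_v G_{va}G_{bv}=(G^2)_{ba}$ and using delocalization gives $|(G^2)_{ba}|\prec\Im m_N/\eta$; combining this with $|T_{ab}|\le C_0/N$, $\sum_{a,b}|G_{ab}|\prec N^{3/2}\eta^{-1/2}(\Im m_N)^{1/2}$, one obtains at best a bound of order $N^{-1/2}\eta^{-3/2}(\Im m_N)^{3/2}$, which at $\eta\sim N^{-1+\epsilon}$, $\Im m_N\prec N^{-1/3+\epsilon}$ is of order $N^{1/2}$ — vastly larger than $N^{-1/3}$. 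The paper makes exactly this point right after~\eqref{strategy}: applying the Ward identity naively to the full derivative only produces $O(\eta^{-1}\E[\Im m_N])$, which ``is far from the target.'' In fact the paper subjects \emph{both} terms in~\eqref{target} to the same abstract iterated expansion; the purely off-diagonal summand is expanded via the off-diagonal case (b) of the mechanism (see~\eqref{result_2}), and the factors $\E[\Im m_N]$ appearing in the final bound arise specifically from the index-coincidence terms ($B_3$ in~\eqref{expand_index_b_step1}) generated during that expansion, not from a one-shot Ward contraction.

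Two smaller imprecisions in Step~3: the paper uses two distinct iteration parameters — $K\sim\log N$ to raise the power of $T$ until $(T^K)_{ab}$ is $O(N^{-10})$ via the spectral gap, and a fixed constant $D\gtrsim 4/\epsilon$ to raise the degree until the naive local-law bound $N(C_0\Psi)^D$ becomes negligible — and you conflate these into a single $M\sim\epsilon^{-1}\log N$. Also, the small prefactor $N^{-\epsilon/2}$ multiplying $\E[\Im m_N]$ in the paper does \emph{not} come from the $(S-N^{-1})$ coefficients; it comes from the local-law factor $\Psi\le N^{-\epsilon}$ attached to the extra off-diagonal entries remaining after the Ward contraction in the index-coincidence terms.
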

Admitting Proposition \ref{lemma_step_1}, using Gr\"onwall's inequality in combination with the corresponding estimate in Lemma \ref{img_gaussian} for the initial GOE matrix, we have
\begin{align}\label{conclusion_1}
	\E[\Im m^{(1)}_N(t,z)] \leq C \E[\Im m^{(1)}_N(0,z)]+O_{\prec}(N^{-1/3-\epsilon/4})=O(N^{-1/3}),
\end{align}
uniformly in $z \in {\mathcal S}_{\mathrm{edge}}$ and $t \in \R^+$.  Hence we have proved the first statement (\ref{img_wigner_1}) of Proposition \ref{GCT_mn_1}. 

Choose $T_0:=10\log N$. In view of the matrix flow in (\ref{sum_2}), one shows that $G^{(1)}(T_0,z)$ is very close to $G^{(1)}(\infty,z):=(W^{\wt S}-z)^{-1}$, \ie
\begin{align}\label{approxxxx}
	\|G^{(1)}(T_0,z) -G^{(1)}(\infty,z)\|_{\mathrm{max}} \leq& \|G^{(1)}(T_0,z)(W^{\wt S}-H^{(1)}(T_0))G^{(1)}(\infty,z)\|\nonumber\\
	\leq& \frac{N}{\eta^2} \|W^{\wt S}-H^{(1)}(T_0))\|_{\mathrm{max}} \prec N^{-2},
\end{align}
where we used the inequality $\|A\|_{\mathrm{max}} \leq \|A\| \leq N \|A\|_{\mathrm{max}}$, the second resolvent identity and  that $\|G(z)\| \leq \frac{1}{\eta}$. 
Combining (\ref{conclusion_1}) with (\ref{approxxxx}), we have
$$\E^{\wt S}[\Im m_N(z)]=O(N^{-1/3}).$$
Thus the second statement (\ref{middle}) of Proposition \ref{GCT_mn_1} follows directly from Lemma \ref{lemma_third_step}.

The remaining part of this section is devoted to proving Proposition \ref{lemma_step_1}. Throughout the remaining part of this section, we often ignore the superscript $(1)$ and the dependence on $t \in \R^+, z \in \C^+$ and set
\begin{equation}\label{le time dependent G_1}
	H(t)= H^{(1)}(t); \quad	G = G^{(1)}(t,z)=\frac{1}{ H^{(1)}(t)-z}; \quad  m_N = m^{(1)}_N(t,z)=\frac{1}{N} \Tr G^{(1)}(t,z).
\end{equation}
Since $H(t)$ is real symmetric, the corresponding Green function satisfies
\begin{align}\label{symmetric}
	G_{ij}=G_{ji}, \qquad\qquad  1\leq i<j\leq N.
\end{align}
From the local law in Theorem \ref{locallawgene} and Lemma \ref{prop_msc}, we obtain a similar local law for $G(t,z)$, \ie
\begin{equation}\label{G}
	\max_{i,j} | G_{ij}(t,z) -\delta_{ij}  m_{sc} (z) |   \prec \Psi  :=\frac{1}{N\eta}, \qquad  N^{-\frac{1}{3}+\epsilon} \leq \Psi\leq N^{-\epsilon},
\end{equation}
uniformly in $t\in \R^+$ and $z \in {\mathcal S}_{\mathrm{edge}}$ given in (\ref{S_edge}). Indeed, for any $t\in [0, 10 \log N]$, the above time-dependent local law follows from Theorem \ref{locallawgene} and that $G(t,z)$ is stochastically Lipschitz continuous with time using a grid argument. For any large $t\geq 10 \log N$, the time-dependent local law can be obtained using a standard matrix perturbation theory. The estimates throughout this section hold uniformly for any $t\in \R^+, z \in {\mathcal S}_{\mathrm{edge}}$ without specific mentioning. 

Using the spectral decomposition of $H(t)$ and the analogous eigenvector delocalization estimates in (\ref{delocal_vector}), we have the following estimate for the diagonal Green function entries, \ie for any $1\leq a\leq N$,
\begin{align}\label{ward_1}
	\Im G_{aa}(t,z)=\sum_{j=1}^N \frac{ \eta |\langle \mathbf{e}_a, \mathbf{u}_j(t) \rangle|^2 }{|\lambda_j(t)-z|^2} \prec \frac{\eta}{N}\sum_{j=1}^N \frac{ 1}{|\lambda_j(t)-z|^2} =\Im m_N(t,z),
\end{align}
where $\{\lambda_j(t)\}$ are the eigenvalues of $H(t)$ and $\{\mathbf{u}_j(t)\}$ are the corresponding eigenvectors. As discussed below (\ref{G}), the estimate in (\ref{ward_1}) holds true for any $t\in \R^+$ and $z \in \mathcal{S}$ given in (\ref{ddd}). Using Young's inequality, the Ward identity in (\ref{ward_id}) and the estimate in (\ref{ward_1}),  
for any $1\leq a,c\leq N$, we have the following {\it generalized Ward identity}, 
\begin{align}\label{ward_2}
	\frac{1}{N} \sum_{b=1}^N |G_{ab}G_{bc}(t,z)|^2\prec \frac{\Im m_N(t,z)}{N\eta}.
\end{align}
We also remark that, since $\|G(z)\| \leq \frac{1}{\eta}$ which implies that $ \max_{i,j} |G_{ij}(z)|\leq N^{1-\epsilon}$ for any $z \in {\mathcal S}$, the condition in the third statement in Lemma \ref{dominant} is always satisfied to estimate the expectations of finite products of Green function entries without specific mentioning.

\subsection{Proof of Proposition \ref{lemma_step_1}}
Now we are ready to prove Proposition \ref{lemma_step_1}.

\begin{proof}[Proof of Proposition \ref{lemma_step_1}]
	Recall the matrix flow $H^{(1)}(t)= (h_{ab})_{1\leq a,b\leq N}$ in (\ref{sum_2}), \ie
	\begin{align}\label{sum_22}
		H^{(1)}(t)=\mathrm{e}^{-\frac{t}{2}}W +\sqrt{1-\mathrm{e}^{-t}} W^{\wt S}, \qquad (\wt S)_{ab} =(S)_{ab} (1+\delta_{ab}),
	\end{align}
	where $W=(w_{ab})_{1\leq a,b\leq N}$ is the GOE matrix and $W^{\wt S}=(w^{(\wt s)}_{ab})_{1\leq a,b\leq N}$ is an independent real symmetric Gaussian matrix with the modified variance profile $\wt S$. 
	
	Taking the time derivative of the expectation of $\Im m^{(1)}_N(t,z)$, we have
\begin{align}\label{intial_step}
	\frac{\dd}{\dd t}\E[\Im m^{(1)}_N(t,z)]=\frac{1}{N}\sum_{v=1}^N\E\Big[\Im \frac{\dd G_{vv}}{\dd t} \Big]=-\frac{1}{N}\sum_{v=1}^N\sum_{a, b=1}^{N} \E\Big[\Im \big( G_{va} G_{bv}\frac{\dd h_{ab}}{\dd t} \big)\Big],
\end{align}
where in the first step we interchanged the derivative and the expectation due to that $\|G(z)\| \leq \frac{1}{\eta}$, and in the second step we used (\ref{symmetric}) and the following differentiation rule for the Green function entries
\begin{align}\label{dH}
	\frac{\partial G_{ij}}{\partial h_{ab}}=-\frac{G_{ia}G_{bj}+G_{ib}G_{aj}}{1+\delta_{ab}}.
\end{align}
From (\ref{sum_22}), we have
\begin{align}\label{compute_deri}
	\frac{\dd h_{ab}}{\dd t}=-\frac{\ee^{-\frac{t}{2}}}{2} w_{ab}+\frac{\ee^{-t}}{2\sqrt{1-\ee^{-t}}} w^{(\wt s)}_{ab}.
\end{align}
Note that the $k$-th cumulants $(k\geq 2)$ of these Gaussian random variables are given by
\begin{align}\label{cumulant_exact}
       c^{(2)}(w_{ab}) = \frac{1+\delta_{ab}}{N},\quad c^{(2)}(w^{(\wt s)}_{ab}) =S_{ab}(1+\delta_{ab}),    \quad 	c^{(k)}(w_{ab}) =  c^{(k)}(w^{(\wt s)}_{ab}) = 0, \qquad  k\geq 3.
\end{align} 
Plugging (\ref{compute_deri}) in (\ref{intial_step}), using the cumulant expansion formula in Lemma \ref{cumulant} on $\{w_{ab}\}$ and $\{w^{(\wt s)}_{ab}\}$ respectively, we have
\begin{align}\label{sde_im_2}
	\frac{\dd}{\dd t}\E[\Im m^{(1)}_N(t,z)]=&-\frac{\ee^{-t}}{2} \frac{1}{N}\sum_{v,a,b=1}^N \big( S_{ab}-\frac{1}{N}\big) (1+\delta_{ab})\E \Big[ \frac{\partial \Im(G_{va}G_{bv})}{\partial h_{ab}}\Big]\nonumber\\
	=&\frac{\ee^{-t}}{N} \sum_{v,a,b}  T_{ab}\E \Big[\Im \big( G_{vb} G_{bv} G_{aa}+G_{va} G_{ab} G_{bv} \big)\Big],
\end{align}
where we defined a new matrix $T=(T_{ab})_{1\leq a,b\leq N}$
\begin{align}\label{T_ab}
	T:=S-\Pi,\qquad \qquad   (\Pi)_{ab} := \frac{1}{N}, \qquad 1\leq a,b\leq N.
\end{align} 
Here we used the chain rule and (\ref{cumulant_exact}) in the first step of (\ref{sde_im_2}), and in the second step we used the differentiation rule in (\ref{dH}), (\ref{symmetric}) and that $\partial/\partial h_{ab}$ commutes with $\Im$.

 It is easy to check that $T$ is a real symmetric matrix and commutes with $S$. Moreover, for any $k\geq 1$
 \begin{align}\label{relation_TS}
 	T^{k}=S^{k}-\Pi, \qquad \qquad  TS^{k}=T^{k+1}.
 \end{align}
From the conditions in (\ref{sum_S}), (\ref{flat}) and the spectral property of the matrix $S$ in (\ref{S_spectrum}), we have
\begin{lemma}\label{lemma_prop_T}
	For any $k\geq 1$, we have the following properties
	\begin{align}\label{prop_T}
		\sum_{b=1}^N(T^{k})_{ab} = 0, \qquad 	\|T^{k}\|_{\max} \leq \frac{C_0}{N},
	\end{align}
	where $C_0=2(C_{\mathrm{sup}}+1)$ with $C_{\mathrm{sup}} \geq C_{\mathrm{inf}}>0$ given in (\ref{flat}). Furthermore, there exists a constant $c_0$ with $C_{\mathrm{inf}}\leq c_0 \leq 1$ such that 
	\begin{align}\label{high_power_T}
	 \|T^{k}\|_{\max}\leq \|T^{k}\| \leq (1-c_0)^{k}.
	\end{align} 
\end{lemma}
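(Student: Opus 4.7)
The plan is to exploit the algebraic interplay between $S$ and the rank-one projection $\Pi$. First I would note that $\Pi$ is the orthogonal projection onto the one-dimensional subspace spanned by the all-ones vector $\mathbf{1}=(1,\ldots,1)^\top$, so $\Pi^2=\Pi$. Since $S$ is doubly stochastic, $S\mathbf{1}=\mathbf{1}$ and $\mathbf{1}^\top S=\mathbf{1}^\top$, and therefore $S\Pi=\Pi S=\Pi$. Expanding $(S-\Pi)^k$ and using $\Pi^2=\Pi$ gives $T^k=S^k-\Pi$ for every $k\geq 1$, which also confirms $TS^k=T^{k+1}$ as recorded in (\ref{relation_TS}). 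The first identity $\sum_b (T^k)_{ab}=0$ then follows immediately, since $T\mathbf{1}=\mathbf{1}-\mathbf{1}=0$ implies $T^k\mathbf{1}=0$.

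For the entrywise bound $\|T^k\|_{\max}\leq C_0/N$, the representation $T^k=S^k-\Pi$ reduces the task to controlling $(S^k)_{ab}$. Using $S_{cb}\leq C_{\sup}/N$ from (\ref{flat}) together with $\sum_c (S^{k-1})_{ac}=1$ (a consequence of double stochasticity applied iteratively), one obtains, for every $k\geq 1$,
\begin{align*}
(S^k)_{ab}=\sum_c (S^{k-1})_{ac}\,S_{cb}\leq \frac{C_{\sup}}{N}\sum_c (S^{k-1})_{ac}=\frac{C_{\sup}}{N},
\end{align*}
and consequently $|(T^k)_{ab}|\leq (S^k)_{ab}+1/N\leq (C_{\sup}+1)/N\leq C_0/N$.

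For the operator-norm estimate I would invoke the orthogonal decomposition $\R^N=\mathrm{span}(\mathbf{1})\oplus\mathbf{1}^\perp$. Both subspaces are invariant under $S$ (by double stochasticity) and under $\Pi$. On $\mathrm{span}(\mathbf{1})$, $S$ and $\Pi$ both act as the identity, so $T=0$ there; on $\mathbf{1}^\perp$, $\Pi=0$ and, by the spectral gap (\ref{S_spectrum}), $\mathrm{Spec}(S|_{\mathbf{1}^\perp})\subset[-1+c_-,1-c_+]$. Setting $c_0\deq\min(c_-,c_+)$, which satisfies $C_{\inf}\leq c_0\leq 1$ (the upper bound being automatic, since the interval in (\ref{S_spectrum}) would be empty otherwise, forcing $S=\Pi$ and $T=0$), this yields $\|T\|=\|T|_{\mathbf{1}^\perp}\|\leq 1-c_0$, whence $\|T^k\|\leq(1-c_0)^k$. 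The trivial inequality $\|T^k\|_{\max}\leq\|T^k\|$ then follows from $|A_{ij}|=|\langle\mathbf{e}_i,A\mathbf{e}_j\rangle|\leq\|A\|$.

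I anticipate no genuine obstacle: the whole argument is linear-algebraic, and the key structural identity $T^k=S^k-\Pi$ does all the heavy lifting. The only points requiring a little care are the simultaneous use of the entrywise upper bound on $S$ and the row-sum condition when iterating to control $(S^k)_{ab}$ uniformly in $k$, and the identification of $c_0$ so that both $C_{\inf}\leq c_0$ (from the lower bounds on $c_\pm$ stated after (\ref{S_spectrum})) and $c_0\leq 1$ are guaranteed.
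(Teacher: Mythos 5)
Your proof is correct and uses exactly the ingredients the paper invokes (the paper states the lemma as an immediate consequence of \eqref{sum_S}, \eqref{flat}, \eqref{S_spectrum} and the algebraic identity $T^k=S^k-\Pi$ from \eqref{relation_TS}, without supplying a separate proof). The only blemish is cosmetic: in justifying $c_0\le 1$ you write that an empty spectral interval would force $S=\Pi$, whereas it would force $\mathrm{Spec}(S)=\{1\}$ and hence $S=I$ (by symmetry), which is equally impossible under the strict positivity of the entries in \eqref{flat}; the conclusion $c_0\le 1$ stands regardless, e.g.\ because the $N-1$ eigenvalues on $\mathbf{1}^\perp$ force $-1+c_-\le 1-c_+$, so $\min(c_-,c_+)\le 1$.
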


Using the summation property of the matrix $T$ in (\ref{prop_T}), we then write (\ref{sde_im_2}) as
\begin{align}\label{goal}
	\frac{\dd}{\dd t}\E[\Im m^{(1)}_N(t,z)]=&\ee^{-t}\frac{1}{N} \sum_{v,a,b}  T_{ab}\E \Big[\Im \big( G_{vb} G_{bv} (G_{aa}-m_{sc})+G_{va} G_{ab} G_{bv} \big)\Big].
\end{align}
Note that if we replace the row and column index $a$ (or the index $b$) in the product of Green function entries in (\ref{goal}) with a fresh summation index, say $k$, then the resulting term will vanish due to the summation property of $T_{ab}$ in (\ref{prop_T}). Such replacements can be realized using cumulant expansions. 

For example, we look at the first term on the left side of (\ref{goal}). We ignore the imaginary part since it does not play an essential role here. Let
\begin{align}\label{example_step_0}
I_1:=\frac{1}{N} \sum_{v,a,b}  T_{ab}\E \big[ G_{vb} G_{bv} (G_{aa}-m_{sc}) \big]=O_\prec(N\Psi^3),
\end{align}
where the estimate follows naively from the local law in (\ref{G}). We will next perform cumulant expansions in Lemma \ref{cumulant} via the index $a$ to improve the naive estimate to
\begin{align}\label{small_goal}
I_1=O_\prec((\log N)N\Psi^4).
\end{align}
The corresponding expansions via the index $b$ are similar but slightly more complicated. 

Using the relation $-\frac{1}{m_{sc}}=z+m_{sc}$ (see (\ref{msc})), the resolvent identity $zG_{aa}=(HG)_{aa}-1$ and invoking cumulant expansions, we have
\begin{align}\label{example_step_1}
	-\frac{1}{m_{sc}} I_1=&\frac{1}{N} \sum_{v,a,b}  T_{ab}\E \Big[ G_{vb} G_{bv}\big( \sum_{k} h_{ak} G_{ka} \big)\Big]  +\frac{m_{sc}}{N} \sum_{v,a,b}  T_{ab}\E \big[ G_{vb} G_{bv} (G_{aa}-m_{sc}) \big]\nonumber\\
	=&\frac{1}{N} \sum_{v,a,b,k}  T_{ab} \times c_{ak}^{(2)}(t)   \E \big[  \frac{\partial G_{ka} G_{vb} G_{bv}  }{\partial h_{ak}}\big]+\frac{m_{sc}}{N} \sum_{v,a,b}  T_{ab}\E \big[ G_{vb} G_{bv} (G_{aa}-m_{sc}) \big],
\end{align}
where $c_{ak}^{(2)}(t)$ is the variance of the time dependent Gaussian entry $h_{ak}$, \ie
$$c_{ak}^{(2)}(t)=c^{(2)}(h_{ak})=\Big(\frac{\ee^{-t}}{N}+(1-\ee^{-t}) S_{ak}\Big)(1+\delta_{ak})$$ 
which follows from (\ref{cumulant_exact}). In combination with the $\frac{1}{1+\delta_{ak}}$ factor in the differentiation rule in (\ref{dH}), we define a real symmetric time-dependent matrix, \ie
\begin{align}\label{S_ab}
	(S(t))_{ak}	:=\frac{c_{ak}^{(2)}(t)}{1+\delta_{ak}}=\frac{\ee^{-t}}{N}+(1-\ee^{-t}) S_{ak}, \qquad 1\leq a,k\leq N.
\end{align}
It is easy to check that $S(t)$ is a symmetric and doubly stochastic matrix. More precisely, we have
\begin{align}\label{TS_prop}
	S(t)= (1-\ee^{-t})T+\Pi, \qquad 	S(t)T=TS(t)=(1-\ee^{-t}) T^{2}.
\end{align}
 Using the differentiation rule in (\ref{dH}), we have from (\ref{example_step_1}) that
\begin{align}\label{example_step_2}
	-\frac{1}{m_{sc}}I_1=&-\frac{1}{N} \sum_{v,a,b,k}  T_{ab} (S(t))_{ka}  \E \big[  G_{aa}G_{kk}G_{vb} G_{bv} \big]+\frac{m_{sc}}{N} \sum_{v,a,b}  T_{ab}\E \big[ G_{vb} G_{bv} (G_{aa}-m_{sc}) \big]\nonumber\\
	&-\frac{1}{N} \sum_{v,a,b,k}  T_{ab} (S(t))_{ka}  \E \big[ G_{ka}G_{ka}G_{vb} G_{bv}+G_{ka}G_{va}G_{kb} G_{bv} +G_{ka}G_{vk}G_{ab} G_{bv}\nonumber\\
	&\qquad \qquad\qquad\qquad\qquad+G_{ka}G_{vb}G_{ba} G_{kv}+G_{ka}G_{vb}G_{bk} G_{av}\big],
\end{align}
where the leading term on the right side is obtained from acting $\partial/\partial h_{ak}$ on $G_{ka}$, and the remaining terms from differentiating $\partial/\partial h_{ak}$ are presented on the last two lines. Each term on the last two lines of (\ref{example_step_2}) contains four off-diagonal Green function entries which can be bounded by $O_{\prec}(N \Psi^4)$ using the local law in (\ref{G}). Since $\sum_{k}(S(t))_{ka}= 1$ from (\ref{TS_prop}), we observe a cancellation on the first line of (\ref{example_step_2}), \ie 
\begin{align}\label{example_step_3}
\mbox{first line of the r.h.s. (\ref{example_step_2})}=&-\frac{1}{N} \sum_{v,a,b,k}  T_{ab} (S(t))_{ka}  \E \big[  G_{aa}(G_{kk}-m_{sc})G_{vb} G_{bv}\big]\nonumber\\
	=&- \frac{m_{sc}}{N}  \sum_{v,a,b,k}  T_{ab} (S(t))_{ka}  \E \big[  (G_{kk}-m_{sc})G_{vb} G_{bv}\big] \nonumber\\
	&-\frac{1}{N} \sum_{v,a,b,k}  T_{ab} (S(t))_{ka}  \E \big[  (G_{aa}-m_{sc})(G_{kk}-m_{sc})G_{vb} G_{bv}\big],
\end{align}
where the term on the last line can be bounded by $O_{\prec}(N\Psi^4)$ using the local law in (\ref{G}), while the leading term can only be bounded by $O_{\prec}(N\Psi^3)$ without any improvement. However for such leading term in (\ref{example_step_3}), we observe that the index $a$ no longer shows up in the Green function entries as the row or the column index. Using the second relation in (\ref{TS_prop}), we rewrite this leading term as
\begin{align}\label{leading_term}
	- \frac{m_{sc}}{N}  \sum_{v,a,b,k}  T_{ab} (S(t))_{ka}  \E \big[  (G_{kk}-m_{sc})G_{vb} &G_{bv}\big]=- \frac{m_{sc}}{N}  \sum_{v,b,k}   (S(t)T)_{kb}  \E \big[  (G_{kk}-m_{sc})G_{vb} G_{bv}\big]\nonumber\\
	=&- \frac{m_{sc}(1-\ee^{-t})}{N}  \sum_{v,b,a}   (T^2)_{ab}  \E \big[  (G_{aa}-m_{sc})G_{vb} G_{bv}\big],
\end{align}
where we replaced the summation index $k$ with the original index $a$ without loss of generality. Combining (\ref{example_step_3}), (\ref{leading_term}) with (\ref{example_step_2}) and multiplying $-m_{sc}$ on both sides of (\ref{example_step_2}), we conclude that
\begin{align}\label{example_step_4}
		 I_1=&\frac{m^2_{sc}(1-\ee^{-t})}{N}  \sum_{v,a,b}   (T^2)_{ab}  \E \big[  G_{vb} G_{bv}(G_{aa}-m_{sc})\big]+O_{\prec}(N\Psi^4),
	 \end{align}
where the last error term is from the terms on the last two lines of (\ref{example_step_2}) and the last line of (\ref{example_step_3}) using the local law in (\ref{G}). Note that the leading term on the right side of (\ref{example_step_4}) is the same as the original term $I_1$ in (\ref{example_step_0}) with the power of $T$ increased to $2$ up to a deterministic factor $m^2_{sc}(1-\ee^{-t})$. For this leading term we can repeat the above expanding procedure to further increase the power of $T$, \ie for any $k\geq 1$,
\begin{align}\label{example_step_5}
		\frac{1}{N} &\sum_{v,a,b}  (T^k)_{ab}\E \big[ G_{vb} G_{bv} (G_{aa}-m_{sc}) \big]\nonumber\\
		=&\frac{m^{2}_{sc}(1-\ee^{-t})}{N}  \sum_{v,a,b}   (T^{k+1})_{ab}  \E \big[  G_{vb} G_{bv}(G_{aa}-m_{sc})\big]+O_{\prec}(N\Psi^4).
 \end{align}
Note that the deterministic factor $m^2_{sc}(1-\ee^{-t})$ is not harmful in iterations since $|m_{sc}| \leq 1$ and $t\geq 0$. We stop the iterations until the power of $T$ is increased to sufficiently large, say
\begin{align}\label{K_choice}
	 K:=\lceil-\frac{10}{\log(1-c_0)} \log N\rceil,
\end{align}
with the constant $0<c_0<1$ given in Lemma \ref{lemma_prop_T}. From (\ref{high_power_T}), we have
$$\max_{a,b} \big|(T^K)_{ab} \big| \leq  (1-c_0)^{K} \leq N^{-10}.$$
Thus we obtain that
\begin{align}
	\Big|\frac{1}{N}  \sum_{v,a,b}   (T^K)_{ab}  \E \big[  G_{vb} G_{bv}(G_{aa}-m_{sc})\big]\Big|=O_\prec(N^{-8}),
\end{align}
where we used that $|G_{ij}| \prec 1$.  Moreover, there are at most $O(\log N)$ subleading terms consisting of four Green function entries generated in repeating (\ref{example_step_5}), each of which contributes $O_\prec(N\Psi^4)$ using the local law in (\ref{G}). In this way, we have improved the estimate as claimed in (\ref{small_goal}).

We remark that each term that contributes $O_\prec( N\Psi^4)$ generated in the above procedure shares a similar structure as the example term $I_1$ in (\ref{example_step_0}). Hence as in (\ref{example_step_2}), we can apply cumulant expansions  to these terms via the new summation index $k$ instead and improve the estimate iteratively. To see this, we introduce an abstract form of averaged products of shifted Green function entries \ie $G_{ij}-\delta_{ij} m_{sc}$ and entries of the matrix powers of $T$, which can be expanded as in (\ref{example_step_2}).

\subsection{Abstract form of products of Green function entries}\label{subsec:abstract_form}

	We will use the general letters $j_1, j_2,\ldots$ to denote the summation indices $\eg v,a,b$ in (\ref{example_step_0}) that run from 1 to $N$. We use the letters $x_i$, $y_i$ or $w_l$ to denote in general the row and column index of a Green function entry $G_{x_iy_i}$ or a shifted diagonal Green function entry $G_{w_lw_l}-m_{sc}$, and each $x_i,y_i,w_l$ represents some summation index in $j_1, j_2, \ldots$. We use $\equiv$ to denote such representation, \eg we write $x_i\equiv j_1$ if the row index $x_i$ represents the summation index $j_1$. 
	
	In order to avoid confusion, we clarify that $x_i \equiv j_1, ~y_i \equiv j_1$ means that both $x_i$ and $y_i$ represent the same summation index~$j_1$ and we write $x_i=y_i$ and $G_{x_iy_i}$ is a {\it diagonal} entry. If~$x_i$ and~$y_i$ represent two distinct summation indices, \eg $x_i \equiv j_1$ and $y_i \equiv j_2$, then we say $x_i \neq y_i$ and $G_{x_iy_i}$ is an {\it off-diagonal} entry. We remark that the value of $x_i$ could coincide with $y_i$ as the summation indices $j_1$ and $j_2$ run from $1$ to $N$. 
	
\begin{definition}\label{def_form}
	For any $d \in \N$ with $d\geq 2$, we use $\mathcal{J}_d:=(j_1,j_2,\ldots, j_{d})$ to denote a set of $d$ ordered summation indices ranging from $1$ to $N$. For any $2\leq m_0 \leq d$, the first $m_0$ summation indices $j_1,\ldots,j_{m_0}$ have the non-uniform weights from $\prod_{p=1}^{m_0-1}(T^{k_p})_{j_{p}j_{p+1}}~(k_p\geq 1)$, and each of the remaining $d-m_0$ indices $j_{m_0+1},\ldots,j_{d}$ has a uniform weight $N^{-1}$ in the summation.

	We split the summation index set $\mathcal{J}_d$ into two disjoint subsets $\mathcal{J}^{(\mathrm{off})}_d$ and $\mathcal{J}^{(\mathrm{diag})}_d$, with $\# \mathcal{J}^{(\mathrm{off})}_d=m_1$, $\#\mathcal{J}^{(\mathrm{diag})}_d=m_2$ and $d=m_1+m_2$. We use $\prod_{i=1}^{m_1} G_{x_i y_i}$ to denote a product of $m_1$ off-diagonal Green function entries, where each row and column index $x_i, y_i$ represents an element in $\mathcal{J}^{(\mathrm{off})}_d$ with $x_i \neq y_i$, and each element in $\mathcal{J}^{(\mathrm{off})}_d$ appears exactly twice in $\{x_i,y_i\}_{i=1}^{m_1}$. Moreover, we use $\prod_{l=1}^{m_2} \big(G_{w_lw_l}-m_{sc}\big)$ to denote a product of $m_2$ shifted diagonal Green function entries where each $w_l$ represents a different element in $\mathcal{J}^{(\mathrm{diag})}_d$.

	Then we consider the abstract form of $d$ (shifted) Green function entries
	\begin{align}\label{form}
			\frac{1}{N^{d-m_0}} \sum_{j_1, j_2, \ldots, j_{d}} \prod_{p=1}^{m_0-1} (T^{k_p})_{j_{p}j_{p+1}}  \Big( \prod_{i=1}^{m_1} G_{x_i y_i} \prod_{l=1}^{m_2} \big(G_{w_lw_l}-m_{sc}\big)(t,z) \Big),
	\end{align}
for any $z \in {\mathcal S}_{\mathrm{edge}}$ given in (\ref{S_edge}) and $t \geq 0$. The number $d$ is referred to the {\it degree} of such a term. A term of the form in (\ref{form}) with degree $d$ will be denoted by $P_{d} \equiv P_{d}(t,z)$ in general.  The collection of all the terms of the form in (\ref{form}) with degree $d$ is denoted by $\mathcal{P}_{d} \equiv \mathcal{P}_{d}(t,z)$.
\end{definition}

Below are some example terms of the form in (\ref{form}): 
\begin{align}\label{term}
&\sum_{a,b}  \E \big[(T^2)_{ab} G_{ab} G_{ba} \big] \in \mathcal{P}_{2}; \qquad 	\frac{1}{N} \sum_{a,b,v}  \E \Big[T_{ab} G_{vb} G_{bv} \big(G_{aa}-m_{sc}\big) \Big],~\frac{1}{N} \sum_{a,b,v}  \E \Big[T_{ab}G_{va} G_{ab} G_{bv}  \Big] \in \mathcal{P}_{3};\nonumber\\
&  	\frac{1}{N} \sum_{a,b,k,v}  \E \Big[T_{ab} (T^2)_{ka}G_{vb} G_{bv} \big(G_{aa}-m_{sc}) \big(G_{kk}-m_{sc}\big)  \Big],~\frac{1}{N^2} \sum_{a,b,k,v}    \E \big[(T^4)_{ab} G_{ka}G_{ka}G_{vb} G_{bv}\Big] \in \mathcal{P}_{4}.
\end{align}
where the terms in $\mathcal{P}_{3}$ are indeed the target terms to be estimated in (\ref{goal}) with the index $v$ having a uniform weight in the summation. 

Given a general term $P_d$ of the form in (\ref{form}), using the local law in (\ref{G}) and the max norm of the matrix $T^{k_p}$ in (\ref{prop_T}), we obtain 
$$|P_d| \prec \frac{1}{N^{d-m_0}}  \Big(\frac{C_0}{N}\Big)^{m_0-1} (N\Psi)^{m_2} \sum_{\mathcal{J}^{(\mathrm{off})}_d} \prod_{i=1}^{m_1} |G_{x_i y_i}|,$$
with $C_0$ given in (\ref{prop_T}). When all the $m_1$ indices in $\mathcal{J}^{(\mathrm{off})}_d$ have distinct values in the summation, the product of Green function entries can be bounded by $\Psi^{m_1}$ since $x_i$ and $y_i$ represent different summation indices. If two indices in $\mathcal{J}^{(\mathrm{off})}_d$ coincide in the summation, then the resulting product of Green function entries may be bounded by $\Psi^{m_1-2}$ only, however we gain an additional $N^{-1}$ from the index coincidence. Since $N^{-\frac{1}{3}+\epsilon} \leq \Psi \leq N^{-\epsilon}$, the terms with index coincidences are much smaller. Using that $d=m_1+m_2$, $2\leq m_0\leq d$, we obtain a naive estimate for any $P_{d} \in \mathcal{P}_{d}$, \ie
\begin{align}\label{initial}
	|P_{d}| \prec N (C_0\Psi)^d \ll N  \big( \frac{1}{N^{\epsilon/2}}\big)^{d}.
\end{align}
Moreover, if $\max_{1\leq p\leq m_0-1} \{k_p\} \geq K$ with $K$ given in (\ref{K_choice}), using \eqref{high_power_T} in Lemma \ref{lemma_prop_T}, we have
\begin{align}\label{large_K_term}
	|P_{d}| \prec \frac{1}{N^{d-m_0}} (1-c_0)^{K} \Big(\frac{C_0}{N}\Big)^{m_0-2} (N\Psi)^{m_2} \sum_{\mathcal{J}^{(\mathrm{off})}_d} \prod_{i=1}^{m_1} |G_{x_i y_i}| \prec (1-c_0)^{K} N^2 \leq N^{-8}.
\end{align}

Next we will apply cumulant expansions repeatedly in combination with (\ref{large_K_term}) to improve the naive estimate of $P_d$ in (\ref{initial}). We first note that both the summation index $j_1$ and $j_{m_0}$ in (\ref{form}) are special, since they appear only once in the non-uniform weight function $\prod_{p=1}^{m_0-1} (T^{k_p})_{j_{p}j_{p+1}}$.  If the product of Green function entries was independent of the special summation index $j_1$ or $j_{m_0}$, then the resulting term vanishes due to the summation property in (\ref{prop_T}).  Such decoupling can be realized using cumulant expansions via the index $j_1$ or $j_{m_0}$ as in (\ref{example_step_2}).

Now we introduce an expansion mechanism using either of the special index $j_1$ or $j_{m_0}$. Since both the matrix $T$ and $S$ are symmetric and they commute, we choose $j_1$ conventionally to perform cumulant expansions. We split the discussion into the following two cases, whose proofs are postponed till the next subsection. 
\begin{enumerate}
\item[(a)] If the special summation index $j_1$ appears in a shifted diagonal Green function entry, \ie $G_{j_1j_1}-m_{sc}$, then we have
\begin{align}\label{expand_case_diagonal}
	\E[P_d]=&\sum_{P_{d'} \in \mathcal{P}_{d'}, d'= d+1} \E[P_{d'}]+O_\prec(N^{-8}),
\end{align}
where the first group of terms contains at most $4Kd$ terms of the form in (\ref{form}), denoted by $P_{d'}$ in general, with higher degrees $d+1$ (\ie each term consists of $d+1$ shifted Green function entries) with possible factors $m_{sc}$ and $1-\ee^{-t}$. The last error $O_\prec(N^{-8})$ is from (\ref{large_K_term}) after iterative expansions. 

\item[(b)] If the special summation index $j_1$ appears in two different off-diagonal Green function entries, say $G_{x_1 y_1}$ and $G_{x_2 y_2}$ with $x_1,x_2 \equiv j_1$ and $y_1,y_2 \not\equiv j_1$ from (\ref{symmetric}), then we have
\begin{align}\label{expand_case_offdiagonal}
		\Im	\E[P_d]	=&\sum_{P_{d''} \in \mathcal{P}_{d''}, d''= d+1} \Im \E[P_{d''}]+O_{\prec}(N^{-8})\nonumber\\
		&+O_{\prec}\Big( K \big( ( C_0\Psi)^{d-3} \wedge 1\big)  \big(\E[\Im m_N]+\Im m_{sc} \big) \Big) \one_{m_1= 2}+O_\prec(K N^{-\frac{\epsilon}{2}} \E[\Im m_N]),
\end{align}
	with $m_1$ the number of off-diagonal Green function factors in the original $P_d$ in (\ref{form}). Here the first group of terms on the right side of (\ref{expand_case_offdiagonal}) contains at most $4Kd$ terms of the form in (\ref{form}), denoted by $P_{d''}$ in general, with higher degrees $d+1$. The first error term $O(N^{-8})$ is from (\ref{large_K_term}), and the errors on the last line of (\ref{expand_case_offdiagonal}) are from the cases with index coincidences; see the last term with $\delta_{j_1 y_1}$ in (\ref{expand_index_b_step1}) later. To estimate such terms with index coincidences in general, it is necessary to consider the imaginary part of $P_d$ as discussed in (\ref{im_estiamte_2}) below.
\end{enumerate}

We remark that the error $O_\prec(N^{-8})$ in (\ref{expand_case_diagonal}) and (\ref{expand_case_offdiagonal}) can be made arbitrary small $N^{-C}$ depending on where we stop our iterations, \ie the choice of $K$ given in (\ref{K_choice}).

Now we are ready to prove (\ref{goal}) by looking at 
\begin{align}\label{target}
	\frac{1}{N} \sum_{v,a,b}  T_{ab}\E \Big[\Im \big( G_{vb} G_{bv} (G_{aa}-m_{sc}) \big)\Big]\in \mathcal{P}_{3}, \qquad \frac{1}{N} \sum_{v,a,b}  T_{ab}\E \Big[\Im \big(G_{va} G_{ab} G_{bv} \big)\Big]\in \mathcal{P}_{3}.
\end{align}
For the first term in (\ref{target}), using the first expansion in (\ref{expand_case_diagonal}) via the index $a$, we obtain
\begin{align}\label{result_1}
	\frac{1}{N} \sum_{v,a,b}  T_{ab}\E \Big[\Im\big(  G_{vb} G_{bv} (G_{aa}-m_{sc}) \big)\Big]=\sum_{P_{d} \in \mathcal{P}_{4}} \Im \E[P_{d}]+O_{\prec}(N^{-8}),
\end{align}
where the first group of terms consists of at most $12K$ terms of the form in (\ref{form}) with degrees four. Similarly, we use the second expansion in (\ref{expand_case_offdiagonal}) to expand the second term in (\ref{target}) via the index $a$. Since the number of off-diagonal Green function entries, $m_1$, is three, we have
\begin{align}\label{result_2}
	\frac{1}{N} \sum_{v,a,b}  T_{ab}\E \Big[\Im \big(G_{va} G_{ab} G_{bv} \big)\Big]=\sum_{P_{d} \in \mathcal{P}_{4}} \Im \E[P_{d}]+O_{\prec}(N^{-8})+O_\prec(K N^{-\frac{\epsilon}{2}} \E[\Im m_N]),
\end{align}
where the last error is from the index coincidence \ie $v=a$ or $b=a$.

Next we improve the estimate of $\Im \E[P_d]$ for $d\geq 4$ using iterative expansions. For a general term $P_d$ of the form in (\ref{form}) with degree $d \geq 4$, combining with (\ref{expand_case_diagonal}) and (\ref{expand_case_offdiagonal}), we have
\begin{align}\label{expand_mechanism}
	\Im	\E[P_d]	=&\sum_{P_{d_1} \in \mathcal{P}_{d_1}, d_1= d+1} \Im \E[P_{d_1}] +O_\prec\Big( K N^{-\frac{\epsilon}{2}} \big(\E[\Im m_N]+\Im m_{sc} \big) \Big)+O_{\prec}(N^{-8}),
\end{align}
where the first group of terms is a linear combination of at most $4Kd$ terms with degrees $d+1$, denoted by $\Im \E[P_{d_1}]$ in general for the first iteration step.  We further expand each resulting $\Im \E[P_{d_1}]$ and obtain from (\ref{expand_mechanism}) that
\begin{align}\label{expand_mechanism_second}
	\Im	\E[P_d]	=&\sum_{P_{d_2} \in \mathcal{P}_{d_2}, d_2= d+2} \Im \E[P_{d_2}] +O_\prec(4K^2d N^{-\frac{\epsilon}{2}}) \big(\E[\Im m_N]+\Im m_{sc} \big)+O_{\prec}(4Kd N^{-8}),
\end{align}
where the first group of terms is a linear combination of at most $(4K)^2 d(d+1)$ terms of the form in (\ref{form}) with degrees $d+2$, which are denoted by $\Im \E[P_{d_2}]$ in general for the second step. Iterating the above process for $D-d$ times, we then obtain that
\begin{align}\label{expand_mechanism_third}
	\Im	\E[P_d]	=&\sum_{P_{d'} \in \mathcal{P}_{d'}, d'= D} \Im \E[P_{d'}] +O_\prec\Big((4KD)^{D} N^{-\frac{\epsilon}{2}}\Big) \big(\E[\Im m_N]+\Im m_{sc} \big)+O_{\prec}\big((4KD)^{D}N^{-8}\big),
\end{align}
where the first group of terms contains at most $(4KD)^D$ terms, denoted by $\Im \E[P_{d'}]$ in general, of the form in (\ref{form}) with degrees $D$. We now choose a sufficiently large but fixed $D$ depending only on $\epsilon$ such that $D \geq \frac{4}{\epsilon}$. Using the naive estimate of $P_d$ in (\ref{initial}), the estimate of $\Im m_{sc}$ in (\ref{22}) and that $K=O(\log N)$, for any $d\geq 4$ we have,
\begin{align}\label{four}
\Im	\E[P_d]=&O_{\prec}\Big( (4KD)^{D} \big(N (C_0\Psi)^D+N^{-8}\big)\Big)+O_\prec\Big((4KD)^{D} N^{-\frac{\epsilon}{2}}\Big) \big(\E[\Im m_N]+\Im m_{sc} \big)\nonumber\\
	=&O_{\prec}\big( N^{-\frac{\epsilon}{4}} \E[\Im m_N] \big)+O_\prec(N^{-\frac{1}{3}-\frac{\epsilon}{4}}),
\end{align}
uniformly for any $t\in \R^+$ and $z \in {\mathcal S}_{\mathrm{edge}}$.

Combining the improved estimates in (\ref{four}) for $d\geq 4$ with (\ref{result_1}) and (\ref{result_2}), we hence prove the estimate in (\ref{goal}) and finish the proof of Proposition \ref{lemma_step_1}. 
\end{proof}

\subsection{Proof of the expansions in (\ref{expand_case_diagonal}) and (\ref{expand_case_offdiagonal})}\label{subsec:novel_expansion}

We first prove the expansion in (\ref{expand_case_diagonal}) in Case 1 and next show the expansion in (\ref{expand_case_offdiagonal}) in Case 2.

\textbf{Case 1: index $j_1$ appearing in $G_{j_1j_1}-m_{sc}$.} Given a term in (\ref{form}), without loss of generality, we may assume that $w_1 \equiv j_1$.  Using the relation $-\frac{1}{m_{sc}}=z+m_{sc}$ and the resolvent identity $zG_{j_1j_1}=(HG)_{j_1j_1}-1$, we have \cf (\ref{example_step_1}),
\begin{align}\label{expand_index_a_step1}
	-\frac{1}{m_{sc}}\E[P_d]
	=&\E \Big[	\frac{1}{N^{d-m_0}} \sum_{j_1, \ldots, j_d} \prod_{p=1}^{m_0-1} (T^{k_p})_{j_{p}j_{p+1}} \sum_{k} h_{j_1k}G_{kj_1}  \prod_{i=1}^{m_1} G_{x_i y_i} \prod_{l=2}^{m_2} \big(G_{w_lw_l}-m_{sc}\big)\Big] \nonumber\\
	&+m_{sc}\E \Big[	\frac{1}{N^{d-m_0}} \sum_{j_1, \ldots, j_d} \prod_{p=1}^{m_0-1} (T^{k_p})_{j_{p}j_{p+1}} G_{j_1j_1}  \prod_{i=1}^{m_1} G_{x_i y_i} \prod_{l=2}^{m_2} \big(G_{w_lw_l}-m_{sc}\big)\Big].
\end{align}
Using the differentiation rule in (\ref{dH}) and the definition of $S_{j_1k}(t)$ given in (\ref{S_ab}), we apply cumulant expansions to the first line of (\ref{expand_index_a_step1}) and obtain
\begin{align}\label{expand_index_a_step2}
	-\frac{1}{m_{sc}}\E[P_d]	=&-\frac{1}{N^{d-m_0}} \sum_{j_1, \ldots, j_d,k} \prod_{p=1}^{m_0-1} (T^{k_p})_{j_{p}j_{p+1}} S_{j_1 k}(t) \E \Big[  G_{j_1j_1}G_{kk}  \prod_{i=1}^{m_1} G_{x_i y_i} \prod_{l=2}^{m_2} \big(G_{w_lw_l}-m_{sc}\big)\Big]\nonumber\\
	&-\frac{1}{N^{d-m_0}} \sum_{j_1, \ldots, j_d,k} \prod_{p=1}^{m_0-1} (T^{k_p})_{j_{p}j_{p+1}}S_{j_1 k}(t) \E \Big[  G_{kj_1}G_{kj_1}  \prod_{i=1}^{m_1} G_{x_i y_i} \prod_{l=2}^{m_2} \big(G_{w_lw_l}-m_{sc}\big)\Big]\nonumber\\
	&+\frac{1}{N^{d-m_0}} \sum_{j_1, \ldots, j_d,k} \prod_{p=1}^{m_0-1} (T^{k_p})_{j_{p}j_{p+1}} S_{j_1 k}(t)\E \Big[ G_{kj_1} \frac{\partial   \prod_{i=1}^{m_1} G_{x_i y_i} \prod_{l=2}^{m_2} \big(G_{w_lw_l}-m_{sc}\big)}{\partial h_{j_1 k}}\Big]\nonumber\\
	&+m_{sc}\E \Big[	\frac{1}{N^{d-m_0}} \sum_{j_1, \ldots, j_d} \prod_{p=1}^{m_0-1} (T^{k_p})_{j_{p}j_{p+1}} G_{j_1j_1}  \prod_{i=1}^{m_1} G_{x_i y_i} \prod_{l=2}^{m_2} \big(G_{w_lw_l}-m_{sc}\big)\Big]\nonumber\\
	&=:A_{1}+A_{2}+A_{3}+A_4,
\end{align}
where the terms on the first two lines, \ie $A_1$ and $A_2$ are from acting $\partial/\partial h_{j_1k}$ on $G_{kj_1}$ using (\ref{dH}). 

We first consider the term $A_2$ on the second line above. Using the first relation in (\ref{TS_prop}), the term $A_2$ can be split into two subterms, \ie
\begin{align}\label{split}
	A_2=&-(1-\ee^{-t})\frac{1}{N^{d-m_0}} \sum_{j_1, \ldots, j_d,k} \prod_{p=1}^{m_0-1} (T^{k_p})_{j_{p}j_{p+1}}T_{k j_1 } \E \Big[  G_{kj_1}G_{kj_1}  \prod_{i=1}^{m_1} G_{x_i y_i} \prod_{l=2}^{m_2} \big(G_{w_lw_l}-m_{sc}\big)\Big]\nonumber\\
	&-\frac{1}{N^{d-m_0+1}} \sum_{j_1, \ldots, j_d,k} \prod_{p=1}^{m_0-1} (T^{k_p})_{j_{p}j_{p+1}} \E \Big[  G_{kj_1}G_{kj_1}  \prod_{i=1}^{m_1} G_{x_i y_i} \prod_{l=2}^{m_2} \big(G_{w_lw_l}-m_{sc}\big)\Big],
\end{align}  
where the second subterm is clearly of the form in (\ref{form}) with degree $d+1$, and after properly renaming and rearranging the summation indices, the first subterm is also of the form in (\ref{form}) with degree $d+1$ and a factor $1-\ee^{-t}$. 

We next look at the third line of (\ref{expand_index_a_step2}), denoted by $A_3$. Similarly as in (\ref{split}), each resulting term in $A_3$ using the differentiation rule in (\ref{dH}) can be split into two subterms of the form in (\ref{form}) with degree $d+1$, since both the fresh index $k$ and the index $j_1$ do not appear in $\{x_i,y_i\}_{i=1}^{m_1} \cup \{w_l\}_{l=2}^{m_2}$. Therefore, $A_2+A_3$ is indeed a linear combination of at most $4d-2$ terms of the form in (\ref{form}) with higher degrees $d+1$.

Next, we observe a cancellation between the terms on the first and fourth line of (\ref{expand_index_a_step2}), denoted by $A_1$ and $A_4$ respectively. Since $S(t)$ is doubly stochastic,  $A_{1}+A_4$ is given by
\begin{align}\label{leading_term_expand}
	-\frac{1}{N^{d-m_0}} \sum_{j_1, \ldots, j_d,k} \prod_{p=1}^{m_0-1} (T^{k_p})_{j_{p}j_{p+1}} S_{k j_1}(t)\E \Big[  G_{j_1j_1} (G_{kk}-m_{sc})  \prod_{i=1}^{m_1} G_{x_i y_i} \prod_{l=2}^{m_2} \big(G_{w_lw_l}-m_{sc}\big)\Big].
\end{align}
We further split this term into two terms using $G_{j_1j_1}=m_{sc}+\big( G_{j_1j_1}-m_{sc}\big)$. The resulting term corresponding to $G_{j_1j_1}-m_{sc}$ has a higher degree $d+1$ and can be split into two subterms of the form in (\ref{form}) as in (\ref{split}). Together with all the terms from $A_2+A_3$, we obtain at most $4d$ terms of the form in (\ref{form}) with higher degrees $d+1$, in general denoted by
\begin{align}\label{higher_term}
	\sum_{P_{d_1} \in \mathcal{P}_{d_1}, d_1 = d+1} \E[P_{d_1}],
\end{align}
where the subscript $1$ in $d_1$ indicates the first step of expansion, and each term $\E[P_{d_1}]$ comes with possible factors $m_{sc}$ and $1-\ee^{-t}$. We often ignored these coefficients for notational simplicity.

We next estimate the leading term corresponding to replacing $G_{j_1j_1}$ in (\ref{leading_term_expand}) with $m_{sc}$, \ie
\begin{align}\label{cf_leading_2}
	&-\frac{m_{sc}}{N^{d-m_0}} \sum_{j_1, \ldots, j_d,k} \prod_{p=1}^{m_0-1} (T^{k_p})_{j_{p}j_{p+1}} S_{k j_1}(t)\E \Big[  (G_{kk}-m_{sc})  \prod_{i=1}^{m_1} G_{x_i y_i} \prod_{l=2}^{m_2} \big(G_{w_lw_l}-m_{sc}\big)\Big]\nonumber\\
	=&-\frac{m_{sc}}{N^{d-m_0}} \sum_{k,j_2, \ldots, j_d} (S(t) T^{k_1})_{kj_{2}} \prod_{p=2}^{m_0-1} (T^{k_p})_{j_{p}j_{p+1}}\E \Big[  (G_{kk}-m_{sc})  \prod_{i=1}^{m_1} G_{x_i y_i} \prod_{l=2}^{m_2} \big(G_{w_lw_l}-m_{sc}\big)\Big]\nonumber\\
=&-\frac{m_{sc}(1-\ee^{-t})}{N^{d-m_0}} \sum_{j_1,j_2, \ldots, j_d}  (T^{k_1+1})_{j_1 j_2}\prod_{p=2}^{m_0-1} (T^{k_p})_{j_{p}j_{p+1}} \E \Big[   (G_{kk}-m_{sc})  \prod_{i=1}^{m_1} G_{x_i y_i} \prod_{l=2}^{m_2} \big(G_{w_lw_l}-m_{sc}\big)\Big]
\end{align} 
where in the last step we used the second relation in (\ref{TS_prop}) and replaced the index $k$ with the original $j_1$. Compared to the original term $P_d$, the matrix power $k_1$ has been increased to $k_1+1$. We denote this term by $P_d(k_1 \rightarrow k_1+1)$.

Combining (\ref{higher_term}) and (\ref{cf_leading_2}) with (\ref{expand_index_a_step2}) and multiplying $-m_{sc}$ on both sides of (\ref{expand_index_a_step2}), we obtain
\begin{align}\label{cf_expand}
	\E[P_d]=&m^2_{sc}(1-\ee^{-t}) \E[P_d(k_1 \rightarrow k_1+1)]+\sum_{P_{d_1} \in \mathcal{P}_{d_1}, d_1 = d+1} \E[P_{d_1}],
\end{align}
where the second group of terms contains at most $4d$ terms of the form in (\ref{form}) with higher degrees $d+1$ with possible factors $m_{sc}$ and $1-\ee^{-t}$. These deterministic factors are not harmful, since $|1-\ee^{-t}|\leq 1,~|m_{sc}(z)|\leq 1$ uniformly in $t\in \R^+$ and $z\in {\mathcal S}_{\mathrm{edge}}$.

We continue to expand the leading term $P_d(k_1\rightarrow k_1+1)$ in the same way and obtain from (\ref{cf_expand}) that
\begin{align}
	\E[P_d]=&m^4_{sc}(1-\ee^{-t})^2\E[P_d(k_1 \rightarrow k_1+2)]+\sum_{P_{d_2} \in \mathcal{P}_{d_1}, d_2= d+1}  \E[P_{d_2}],
\end{align}
where the second group of terms contains at most $8d$ terms of the form in (\ref{form}) with higher degrees $d+1$, together with the terms $P_{d_1}$ from the first step in (\ref{cf_expand}). These terms are denoted by $P_{d_2}$ in general for the second step of expansion. We iterate the above process for $K-k_1$ times until the matrix power $k_1$ is increased to sufficiently large $K$ chosen in (\ref{K_choice}). Note that from (\ref{large_K_term}), we have
$$\E[ P_d(k_1 \rightarrow K)]=O_\prec(N^{-8}).$$
Hence we arrive at
\begin{align}\label{expand_case_1}
	\E[P_d]=&\sum_{P_{d_{K-k_1}} \in \mathcal{P}_{d_{K-k_1}}, d_{K-k_1}= d+1} \E[P_{d_{K-k_1}}]+O_\prec(N^{-8}),
\end{align}
where the first group of terms is a linear combination of at most $4(K-k_1)d$ terms generated in the iterations that are of the form in (\ref{form}) with higher degrees $d+1$. This proves the expansion in (\ref{expand_case_diagonal}).

\textbf{Case 2: index $j_1$ appearing in two off-diagonal Green function entries.}  Given a term in (\ref{form}), without loss of generality, we may assume that $x_1,x_2 \equiv j_1$ and $y_1,y_2 \not\equiv j_1$ from (\ref{symmetric}).  Using the relation $-\frac{1}{m_{sc}}=z+m_{sc}$ and the resolvent identity $zG_{j_1y_1}=(HG)_{j_1y_1}-\delta_{j_1 y_1}$, we have
\begin{align}\label{expand_index_b_step1}
	-\frac{1}{m_{sc}}\E[P_d]=&\E \Big[	\frac{1}{N^{d-m_0}} \sum_{j_1, \ldots, j_d} \prod_{p=1}^{m_0-1} (T^{k_p})_{j_{p}j_{p+1}} \sum_{k} h_{j_1k} G_{ky_1} G_{j_1y_2} \prod_{i=3}^{m_1} G_{x_i y_i} \prod_{l=1}^{m_2} \big(G_{w_lw_l}-m_{sc}\big)\Big]\nonumber\\
	&+m_{sc}\E \Big[	\frac{1}{N^{d-m_0}} \sum_{j_1, \ldots, j_d} \prod_{p=1}^{m_0-1} (T^{k_p})_{j_{p}j_{p+1}} G_{j_1y_1} G_{j_1y_2} \prod_{i=3}^{m_1} G_{x_i y_i} \prod_{l=1}^{m_2} \big(G_{w_lw_l}-m_{sc}\big)\Big] \nonumber\\
	& +\E \Big[	\frac{1}{N^{d-m_0}} \sum_{j_1, \ldots, j_d} \prod_{p=1}^{m_0-1} (T^{k_p})_{j_{p}j_{p+1}} \delta_{j_1y_1} G_{j_1y_2} \prod_{i=3}^{m_1} G_{x_i y_i} \prod_{l=1}^{m_2} \big(G_{w_lw_l}-m_{sc}\big)\Big]\nonumber\\
	=:&B_1+B_2+B_3.
\end{align}

Applying cumulant expansions to $B_1$, using the differentiation rule in (\ref{dH}) and the definition of $S(t)$ in (\ref{S_ab}), we have as in (\ref{expand_index_a_step2}) that
\begin{align}\label{temp_111}
B_1
=&	-\frac{1}{N^{d-m_0}} \sum_{j_1, \ldots, j_d,k} \prod_{p=1}^{m_0-1} (T^{k_p})_{j_{p}j_{p+1}} S_{j_1 k}(t) \E \Big[  G_{j_1y_1}G_{kk} G_{j_1y_2} \prod_{i=3}^{m_1} G_{x_i y_i}  \prod_{l=1}^{m_2} \big(G_{w_lw_l}-m_{sc}\big)\Big]\nonumber\\
&-\frac{1}{N^{d-m_0}} \sum_{j_1, \ldots, j_d,k} \prod_{p=1}^{m_0-1} (T^{k_p})_{j_{p}j_{p+1}}S_{j_1 k}(t)  \E \Big[  G_{ky_1}G_{j_1j_1}  G_{ky_2} \prod_{i=3}^{m_1} G_{x_i y_i} \prod_{l=1}^{m_2} \big(G_{w_lw_l}-m_{sc}\big)\Big]\nonumber\\
&+\mbox{remaining terms}\nonumber\\
=:&B^{(1)}_{1}+B^{(2)}_{1}+\mbox{remaining terms},
\end{align}
where the first two leading terms $B^{(1)}_{1}$ and $B^{(2)}_{1}$ are from acting $\frac{\partial}{\partial h_{j_1k}}$ on the off-diagonal Green function entries $G_{k y_1}$ and $G_{j_1 y_2}$ respectively, and all the remaining terms have higher degrees $d+1$ since the index $j_1$ and $k$ do not appear in $\{x_i,y_i\}_{i=3}^{m_1} \cup \{w_l\}_{l=1}^{m_2}$. Moreover, each of these remaining terms can be split into two subterms of the form in (\ref{form}) as in (\ref{split}).

Since $S(t)$ is a doubly stochastic matrix, there is a cancellation between the first leading term $B^{(1)}_{1}$ in (\ref{temp_111}) and the second term $B_2$ in (\ref{expand_index_b_step1}), \ie
\begin{align}\label{B_11+B_2}
	B^{(1)}_{1}+B_2=&-\frac{1}{N^{d-m_0}} \sum_{j_1, \ldots, j_d,k} \prod_{p=1}^{m_0-1} (T^{k_p})_{j_{p}j_{p+1}} S_{j_1 k}(t) \E \Big[  G_{j_1y_1} (G_{kk}-m_{sc}) G_{j_1y_2} \prod_{i=3}^{m_1} G_{x_i y_i}  \prod_{l=1}^{m_2} \big(G_{w_lw_l}-m_{sc}\big)\Big],
\end{align}
where the resulting term can be further split into two subterms of the form in (\ref{form}) with higher degree $d+1$ as in (\ref{split}).

We next look at the other leading term $B^{(2)}_{1}$ in (\ref{temp_111}). As explained below (\ref{leading_term_expand}) and in (\ref{cf_leading_2}), using the second relation in (\ref{TS_prop}), this term can be written as 
\begin{align}\label{B_3}
	B^{(1)}_{2}=&-\frac{m_{sc}(1-\ee^{-t})}{N^{d-m_0}} \sum_{j_2, \ldots, j_d,k} (T^{k_1+1})_{kj_{2}} \prod_{p=2}^{m_0-1} (T^{k_p})_{j_{p}j_{p+1}} \E \Big[  G_{ky_1} G_{ky_2} \prod_{i=3}^{m_1} G_{x_i y_i} \prod_{l=1}^{m_2} \big(G_{w_lw_l}-m_{sc}\big)\Big]\nonumber\\
	&-\frac{1}{N^{d-m_0+1}} \sum_{j_1, \ldots, j_d,k} \prod_{p=1}^{m_0-1} (T^{k_p})_{j_{p}j_{p+1}} \E \Big[  G_{ky_1} (G_{j_1j_1}-m_{sc})  G_{ky_2} \prod_{i=3}^{m_1} G_{x_i y_i} \prod_{l=1}^{m_2} \big(G_{w_lw_l}-m_{sc}\big)\Big].
\end{align}
 Note that after replacing the fresh index $k$ with the original $j_1$, the leading term on the right side of (\ref{B_3}) is actually the original term $P_d$ with the matrix power $k_1$ increased to $k_1+1$, denoted by $P_d(k_1 \rightarrow k_1+1)$ up to a factor $-m_{sc}(1-\ee^{-t})$. Moreover, the remaining term on the last line of (\ref{B_3}) can be split into two subterms of the form in (\ref{form}) with higher degree $d+1$ as in (\ref{split}). The collection of all the terms of the form in (\ref{form}) with higher degrees $d+1$ from the remaining terms in (\ref{temp_111}), the term in (\ref{B_11+B_2}) and the last line of (\ref{B_3}), is then in general denoted by
\begin{align}\label{higher_term2}
	\sum_{P_{d_1} \in \mathcal{P}_{d_1}, d_1 = d+1} \E[P_{d_1}],
\end{align}
where the sum contains at most $4d$ terms of the form in (\ref{form}), and we ignored the possible uniformly bounded factors $m_{sc}$ and $1-\ee^{-t}$ for notational simplicity. 

To sum up, multiplying $-m_{sc}$ on both sides of (\ref{expand_index_b_step1}), we obtain
\begin{align}\label{some_step}
	\E[P_d]=m^2_{sc}(1-\ee^{-t})\E[P_d(k_1 \rightarrow k_1+1)]+\sum_{P_{d_1} \in \mathcal{P}_{d_1}, d_1 = d+1} \E[P_{d_1}]-m_{sc} B_3,
\end{align}
where $B_3$ is the last term with the index coincidence $\delta_{j_1y_1 }$ on the right side of (\ref{expand_index_b_step1}). The rest of this subsection is devoted to estimating $B_3$, \ie
$$B_3=\E \Big[	\frac{1}{N^{d-m_0}} \sum_{j_1, \ldots, j_d} \prod_{p=1}^{m_0-1} (T^{k_p})_{j_{p}j_{p+1}} \delta_{j_1y_1} G_{j_1y_2} \prod_{i=3}^{m_1} G_{x_i y_i} \prod_{l=1}^{m_2} \big(G_{w_lw_l}-m_{sc}\big)\Big].$$

We assume $y_1$ represents the summation index $j_{q}~(2 \leq q \leq d)$, \ie $y_1 \equiv j_q$ and then $\delta_{j_1y_1}=\delta_{j_1j_q}$. Using the local law in (\ref{G}) and the max norm of $T^{k_p}$ in (\ref{prop_T}), we have
\begin{align}\label{diagona_estimate}
	|B_3| \prec \frac{1}{N^{d-m_0}} \Big( \frac{C_0}{N}\Big)^{m_0-1} \Psi^{m_2} \sum_{j_2,\ldots, j_d} \big|G_{j_q y_2}\big|\prod_{i=3}^{m_1} \big|G_{x_i y_i}\big|.
\end{align}
We split the discussion into three cases:
\begin{enumerate}
	\item If $m_1=2$ which implies that $y_1,y_2 \equiv j_q$, then we have from (\ref{diagona_estimate})
		\begin{align}\label{im_estiamte_1}
			|B_3| \prec 	\frac{1}{N^{d-m_0}} \Big( \frac{C_0}{N}\Big)^{m_0-1} \Psi^{m_2} \sum_{j_2,\ldots, j_d} \big|G_{j_q j_q}\big| \prec \big(C_0\Psi\big)^{d-2},
		\end{align}
	which is in general not enough to reach (\ref{expand_case_offdiagonal}). However after taking the imaginary part, we have
	\begin{align}\label{im_estiamte_2}
		\Im B_3 = &\E \Big[	\frac{1}{N^{d-m_0}} \sum_{j_2, \ldots, j_d} \prod_{p=1}^{m_0-1} (T^{k_p})_{j_{p}j_{p+1}}  \Im \Big(G_{j_q j_q}  \prod_{l=1}^{m_2} (G_{w_lw_l}-m_{sc})\Big)\Big]\nonumber\\
		\prec & \begin{cases}
			\E[\Im m_N], & d=2, \\
			\big(C_0\Psi\big)^{d-3} \big(\E[\Im m_N]+\Im m_{sc} \big), & d\geq 3\,,
		\end{cases} \quad \qquad d=2+m_2,
	\end{align}
	where we used the local law in (\ref{G}) and the estimate of $\Im G_{aa}$ in (\ref{ward_1}). We remark that the imaginary part is necessary for a general $P_d$, \eg the first example term of $P_d$ in (\ref{term})
	
	\item If $m_1 = 3$, then there exists a different summation index than $j_q$, say $j_{q'}$ such that $y_2 \equiv j_{q'}$ and $\prod_{i=1}^{3}G_{x_iy_i}=G_{j_1j_q}G_{j_1 j_{q'}} G_{j_{q} j_{q'}}$ from (\ref{symmetric}). Using the generalized Ward identity in (\ref{ward_2}) and that $d=3+m_2$, we have
		\begin{align}\label{im_estiamte_3}
				|B_3| \prec 	\frac{1}{N^{d-m_0}} \Big( \frac{C_0}{N}\Big)^{m_0-1} \Psi^{m_2} \sum_{j_2,\ldots, j_d} \E \big|G_{j_q j_{q'}}\big|^2 \prec \big(C_0\Psi\big)^{d-2} \E[\Im m_N] \leq N^{-\frac{\epsilon}{2}}\E[\Im m_N].
			\end{align}
	\item If $m_1 \geq 4$, then there exists at least $m_1-2$ off-diagonal Green function entries in (\ref{diagona_estimate}). Using the generalized Ward identity in (\ref{ward_2}) and that $d=4+m_2$, we have
	\begin{align}\label{im_estiamte_4}
		|B_3| \prec \big(C_0\Psi\big)^{d-3} \E[\Im m_N] \leq N^{-\frac{\epsilon}{2}}\E[\Im m_N].
	\end{align}
\end{enumerate}

We now return to (\ref{some_step}). Taking the imaginary part and using the estimates of $B_3$ in (\ref{im_estiamte_1})-(\ref{im_estiamte_4}), we obtain 
\begin{align}\label{expand_case_2}
	\Im \E[P_d]=&(1-\ee^{-t}) \Im\big( m^2_{sc} \E[P_d(k_1 \rightarrow k_1+1)]\big)+\sum_{{P_{d_1} \in \mathcal{P}_{d_1},d_1 = d+1}} \Im \E[P_{d_1}]\nonumber\\
	&+ O_{\prec}\Big( \big( ( C_0\Psi)^{d-3} \wedge 1\big)  \big(\E[\Im m_N]+\Im m_{sc} \big) \Big)\one_{m_1=2}+O_\prec(N^{-\frac{\epsilon}{2}} \E[\Im m_N]),
\end{align}
where the second group of terms contains at most $4d$ terms of the form in (\ref{form}).

Iterating the above process for $K-k_1$ times until the power $k_1$ is raised to sufficiently large $K$ chosen in (\ref{K_choice}) and using the estimate in (\ref{large_K_term}), we have
\begin{align}\label{expand_case_22}
\Im	\E[P_d]	=&\sum_{P_{d_{K-k_1}} \in \mathcal{P}_{d_{K-k_1}}, d_{K-k_1}= d+1} \Im \E[P_{d_{K-k_1}}]+O_{\prec}(N^{-8})\nonumber\\
&+O_{\prec}\Big( K \big( ( C_0\Psi)^{d-3} \wedge 1\big)  \big(\E[\Im m_N]+\Im m_{sc} \big) \Big)\one_{m_1= 2}+O_\prec(K N^{-\frac{\epsilon}{2}} \E[\Im m_N]),
\end{align}
where the first group of terms contains at most $4(K-k_1)d$ terms of the form in (\ref{form}) with higher degrees $d+1$, and the errors on the last line are from the term $B_3$ in (\ref{temp_111}) with the index coincidences.  This proves the expansion in (\ref{expand_case_offdiagonal}).

\section{From Gaussian to Wigner with the same variance profile: Proof of Proposition \ref{GCT_mn_2}}\label{sec:step2}

In this section, we consider the matrix flow $H^{(2)}(t)$ given in (\ref{sum_1}) that interpolates between the generalized Gaussian matrix $W^{S}$ studied in Section \ref{sec:step1} and any generalized Wigner matrix with the same variance profile matrix $S$. To prove Proposition \ref{GCT_mn_2}, using Gr\"onwall's inequality in combination with the estimate (\ref{middle}) for the initial matrix $W^{S}$, it suffices to show
\begin{proposition}\label{lemma_step_2}
	For any $t\geq 0$ and $z \in {\mathcal S}_{\mathrm{edge}}$, we have
	$$		\Big|\frac{\dd \E[\Im m^{(2)}_N(t,z)]}{\dd t}\Big|\prec \ee^{-t} \big(N^{-\epsilon}\E[\Im m^{(2)}_N(t,z)]+N^{-1/2}\big).$$
\end{proposition}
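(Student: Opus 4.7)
The plan is to mirror the scheme of Section~\ref{sec:step1}, but with the ``unmatched'' contributions arising from third and fourth cumulants of the entries of $H$ rather than from the second-moment difference $S_{ab}-1/N$. First, I compute $\frac{\dd}{\dd t}\E[\Im m_N^{(2)}(t,z)]$ as in \eqref{intial_step}: the chain rule yields
$\frac{\dd}{\dd t}\E[\Im m_N^{(2)}]=-\frac{1}{N}\sum_{v,a,b}\E[\Im(G_{va}G_{bv})\,\frac{\dd h^{(2)}_{ab}}{\dd t}]$,
where $\frac{\dd h^{(2)}_{ab}}{\dd t}=-\frac{\ee^{-t/2}}{2}w^{(S)}_{ab}+\frac{\ee^{-t}}{2\sqrt{1-\ee^{-t}}}h_{ab}$. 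I then apply the cumulant expansion of Lemma~\ref{cumulant} separately to the two independent sources $w^{(S)}_{ab}$ (Gaussian) and $h_{ab}$, truncating at order $\ell\ge 4$. The crucial point is the moment-matching cancellation: since $\E|w^{(S)}_{ab}|^2=\E|h_{ab}|^2=S_{ab}$, the $p=1$ contributions from the two expansions cancel exactly (this is analogous to how \eqref{sde_im_2} reduced to an expression with a single $T_{ab}$ coefficient; here the analogue yields no coefficient at all). The remainder $R_{\ell+1}$ is of size $O(N^{-C})$ for any $C$, using the high-moment bound \eqref{moment_condition} together with $\|G\|\le\eta^{-1}\le N$.

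Second, I handle the fourth-order contributions, which have the schematic form
$\frac{\ee^{-t}}{N}\sum_{v,a,b} c^{(4)}(h_{ab})\,\E[\Im(\text{product of five Green function entries with indices in }\{v,a,b\})]$
with $|c^{(4)}(h_{ab})|\le CN^{-2}$. In every such monomial some index among $\{v,a,b\}$ appears in two off-diagonal factors, so the generalized Ward identity \eqref{ward_2} reduces a pair of off-diagonal entries to $\Im m_N/(N\eta)$. Power counting then gives the overall bound $O_\prec(\eta^{-1}N^{-1})\E[\Im m_N]=O_\prec(N^{-\epsilon})\E[\Im m_N]$ on the $\eta$-regime of $\mathcal S_{\mathrm{edge}}$, which is absorbed into the first term on the right-hand side of the target inequality.

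Third, I address the third-order contributions, which form the main obstacle. They take the schematic form
$\frac{\ee^{-t}\sqrt{1-\ee^{-t}}}{N}\sum_{v,a,b} c^{(3)}(h_{ab})\,\E[\Im(\text{product of four Green function entries})]$
with $|c^{(3)}(h_{ab})|\le CN^{-3/2}$. The naive local-law bound gives only $O(1)$, and in general the factors of $G$ contain ``unmatched'' indices (occurring an odd number of times) that preclude a direct Ward reduction. To improve this, I adapt the iterative cumulant expansion developed for the Gaussian flow in Subsection~\ref{subsec:novel_expansion} to the present setting. One carries out a cumulant expansion through one of the unmatched indices (say $a$), producing new factors of the second cumulant $S_{ak}$ from the differentiation rule~\eqref{dH} which combine with the original coefficient $c^{(3)}_{ab}$; after summation over $a$ the leading term reproduces an expression of the same abstract form as in Definition~\ref{def_form} but with the coefficient matrix $c^{(3)}_{ab}$ multiplied by $S$ on one index. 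Iterating $O(\log N)$ times and invoking the spectral-gap property \eqref{high_power_T} of $S$ (which carries over to the matrices $c^{(3)} S^k$ by H\"older/operator-norm bounds combined with $\|c^{(3)}\|_{\max}\le CN^{-3/2}$) drives the leading term below any polynomial threshold, and each of the $O(\log N)$ subleading terms acquires an extra off-diagonal Green function factor, hence an extra factor of $\Psi$, so after sufficiently many iterations they sum to $O_\prec(N^{-1/2})$. The main technical obstacle is precisely this adaptation: whereas in Section~\ref{sec:step1} one iterates with the self-reproducing matrix $T=S-\Pi$, here one has to iterate with the hybrid object ``$c^{(3)}$ times a growing power of $S$'' and verify that the spectral gap effectively kills the leading iterate.

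Combining these three estimates yields the claimed bound $|\frac{\dd}{\dd t}\E[\Im m_N^{(2)}(t,z)]|\prec \ee^{-t}(N^{-\epsilon}\E[\Im m_N^{(2)}]+N^{-1/2})$, and Gr\"onwall's inequality together with the initial estimate \eqref{middle} concludes Proposition~\ref{GCT_mn_2}.
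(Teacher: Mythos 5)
Your first two parts are correct and match the paper: the time derivative is computed via cumulant expansion, the second‐order contributions from $W^{S}$ and $H$ cancel because the variances agree, and the fourth‐order terms $K_4$ are bounded by $O_\prec\big(\E[\Im m^{(2)}_N]/(N\eta)\big) \le O_\prec(N^{-\epsilon})\E[\Im m^{(2)}_N]$ via the estimates~\eqref{ward_1}--\eqref{ward_2}, exactly as in~\eqref{fourth_estimate}. The problem lies in your treatment of the third‐order terms $K_3$, where the mechanism you propose does not work.

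You claim that expanding via an unmatched index $a$ produces a leading term whose coefficient is ``$c^{(3)}$ multiplied by $S$ on one index,'' and that iterating $O(\log N)$ times and invoking the spectral gap~\eqref{high_power_T} kills the leading iterate because $\|c^{(3)}S^k\|$ becomes tiny. This fails for two reasons. First, the spectral gap of $S$ is useless once the top eigenvector of $S$ (the constant vector) is present in the coefficient: $\|S^k\|=1$ for every $k$, so $\|c^{(3)}S^k\|\lesssim N^{-1/2}$ and never improves with $k$. The gap in Section~\ref{sec:step1} only worked because the second‐order coefficient was $T=S-\Pi$, which is orthogonal to the constant mode; after the cumulant expansion via $a$ the second cumulants produce $S(t)$, not $T$, and those contributions that lie along $\Pi$ are not killed by the gap. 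Second, the cumulant matrix $c^{(3)}_{ab}$ has no structural orthogonality to the constant vector, so there is no analogue of~\eqref{prop_T} or~\eqref{high_power_T} for $c^{(3)}S^k$.

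What the paper actually does (Proposition~\ref{unmatch_lemma} and its proof in the appendix) is an iteration on the \emph{number of appearances} $\nn(a)$ of the unmatched index, not on matrix powers. In Case~1 (diagonal $G_{aa}$) and Case~2 (a pair of off‐diagonal entries containing $a$), the cancellation $\sum_j S_{aj}=1$ eliminates the order‐$m_{sc}\E[Q_d]$ term, and the surviving leading term of the same degree is $Q^o_d$ with a bounded coefficient $c_{\mathcal I_m}$ in which $\nn(a)$ has been reduced by two, see~\eqref{case1}--\eqref{case2}. Because parity is preserved, $a$ stays unmatched, and one iterates until $\nn(a)=1$; at that point there are no degree‐$d$ leading terms at all and every resulting term has degree $\ge d+1$. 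After $D-d$ further iterations the degree is $\ge D$, and the bound~\eqref{localaw_0} gives $O_\prec(\Psi^D + N^{-1})$ with the $N^{-1}$ coming from index coincidences. This is the estimate that yields $|K_3| = O_\prec(N^{-1/2})$ in~\eqref{third_estimate}. Your plan needs to replace the spectral‐gap argument for $K_3$ by this $\nn(a)$‐reduction/degree‐increase iteration.
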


We remark that the proof of Proposition \ref{lemma_step_2} is in the same spirit of \cite{Schnelli+Xu} to compare standard Wigner matrices with the corresponding Gaussian ensemble with the second moment matching.

\begin{proof}[Proof of Proposition \ref{lemma_step_2}]

Recall the matrix flow $H^{(2)}(t)= (h_{ab})_{1\leq a,b\leq N}$ in (\ref{sum_1}), \ie
\begin{align}
	H^{(2)}(t)=\mathrm{e}^{-\frac{t}{2}}W^{S} +\sqrt{1-\mathrm{e}^{-t}} H^{S},
\end{align}
where $W^{S}=(w^{(s)}_{ab})_{1\leq a,b\leq N}$ is a Gaussian matrix with the variance profile matrix $S$, and $H^{S}=(h^{(s)}_{ab})_{1\leq a,b\leq N}$ is any generalized matrix which is independent from $W^{S}$ with the same variance profile.

We remark that the local law in (\ref{G}), the estimate of $\Im G_{aa}$ in (\ref{ward_1}) and the generalized Ward identity in (\ref{ward_2}) also hold true for the resolvent of $H^{(2)}(t)$, denoted by $G^{(2)}(t,z)$. For notational simplicity we often ignore in this section the superscript $(2)$ and the dependence on $t \in \R^+, ~z \in \C^+$ in $G^{(2)}(t,z)$.

Taking the time derivative of  $\E[\Im m^{(2)}_N(t,z)]$, using the differentiation rules in (\ref{dH}) and cumulant expansions as in (\ref{intial_step}) and (\ref{sde_im_2}), we obtain
\begin{align}\label{sde_im}
	\frac{\dd}{\dd t}\E[\Im m^{(2)}_N(t,z)]
	=&-\frac{1}{N}\sum_{v=1}^N\sum_{a,b=1}^N \E\Big[\Im \Big( (G_{va} G_{bv}) \big(-\frac{\ee^{-\frac{t}{2}}}{2} w^{(s)}_{ab}+\frac{\ee^{-t}}{2\sqrt{1-\ee^{-t}}} h^{(s)}_{ab} \big)\Big) \Big]\nonumber\\
=&-\frac{\ee^{-t}}{2}\frac{1}{N} \sum_{k+1=3}^{4} \frac{1}{k!} \frac{s^{(k+1)}_{ab}(t)}{N^{\frac{k+1}{2}}}  \sum_{v,a,b} \E \Big[\Im \frac{\partial^k G_{va}G_{bv}}{\partial h_{ab}^k}\Big]+O_{\prec}(\frac{1}{\sqrt{N}}),
\end{align}
where $s^{(k+1)}_{ab}(t)$ is of the constant order and given by
\begin{align}\label{s_ab_t}
   s^{(k+1)}_{ab}(t)= (1-\ee^{-t})^{\frac{k-1}{2}} c^{(k+1)}(\sqrt{N}h^{(s)}_{ab}), \qquad k+1\geq 3,
\end{align}
with $c^{(k+1)}(\sqrt{N}h^{(s)}_{ab})$ defined in (\ref{cumulant_k}). By direct computations, the second order terms for $k+1=2$ vanish in (\ref{sde_im}) since the variance of $w^{(s)}_{ab}$ coincides with the variance of $h^{(s)}_{ab}$. The last error $O_\prec(N^{-1/2})$ stems from truncating the expansions at the fourth order, using the local law in (\ref{G}) and the moment condition in (\ref{moment_condition}). 

It then suffices to estimate the third and fourth order term in (\ref{sde_im}), \ie
\begin{align}\label{third+fourth_term}
	K_3:= \frac{1}{N^{\frac{5}{2}}} \sum_{v,a,b} s_{ab}^{(3)}(t)\E \Big[\Im \frac{\partial^2 G_{va}G_{bv}}{\partial h_{ab}^2}\Big]; \qquad 	K_4:=\frac{1}{N^{3}} \sum_{v,a,b} s_{ab}^{(4)}(t)\E \Big[\Im \frac{\partial^3 G_{va}G_{bv}}{\partial h_{ab}^3}\Big].
\end{align}

We first look at the fourth order term $K_4$. Using the differentiation rule in (\ref{dH}) and (\ref{symmetric}), 
 $K_4$ can be written as a linear combination of the following terms:
\begin{align}\label{some_term}
	\frac{1}{N^{3}} \sum_{v,a,b} s_{ab}^{(4)}(t)\E \big[\Im (G_{va}  G_{aa}G_{bb}G_{bb}G_{av})\big]; \qquad 	\frac{1}{N^{3}} \sum_{v,a,b} s_{ab}^{(4)}(t)\E \big[\Im (G_{va}  G_{aa} G_{bb} G_{ab} G_{bv} )\big];\nonumber\\
	\frac{1}{N^{3}} \sum_{v,a,b} s_{ab}^{(4)}(t)\E \big[\Im (G_{va}  G_{ab}G_{ab}G_{bb} G_{av})\big]; \qquad 	\frac{1}{N^{3}} \sum_{v,a,b} s_{ab}^{(4)}(t)\E \big[\Im (G_{va} G_{ab} G_{ab} G_{ab} G_{bv})\big].
\end{align}
 Using the local law in (\ref{G}) and the generalized Ward identity in~(\ref{ward_1})-(\ref{ward_2}), we have 
\begin{align}\label{fourth_estimate}
	|K_4|=O_{\prec}\Big(\frac{\E[\Im m^{(2)}_N(t,z)]}{N\eta}\Big).
\end{align}

We next estimate the third order term $K_3$ in (\ref{third+fourth_term}), which can be written into a linear combination of the following terms:
\begin{align}\label{fourth_term}
		\frac{\sqrt{N}}{N^{3}} \sum_{v,a,b} s_{ab}^{(3)}(t)\E \big[\Im &(G_{va}  G_{aa}G_{bb}G_{bv})\big]; \qquad 	\frac{\sqrt{N}}{N^{3}} \sum_{v,a,b} s_{ab}^{(3)}(t)\E \big[\Im (G_{va} G_{bb} G_{ab} G_{av} )\big];\nonumber\\
		& 	\frac{\sqrt{N}}{N^{3}} \sum_{v,a,b} s_{ab}^{(3)}(t)\E \big[\Im (G_{va}  G_{ab}G_{ab}G_{bv})\big].
\end{align}
These third order terms cannot be estimated as in (\ref{fourth_estimate}) since we are off by $\sqrt{N}$ from the third order cumulants. As observed in \cite{Schnelli+Xu}, these third order terms are the so-called unmatched terms (up to a factor $\sqrt{N}$) with unmatched indices $a$ and $b$; see Definition \ref{unmatch_def} below.

We can extend the expansion mechanism introduced in \cite[Section 6]{Schnelli+Xu} to generalized Wigner matrices with inhomogeneous variances. Due to the existence of unmatched indices, each time we perform the expansions via an unmatched index, we gain one more off-diagonal Green function entry in the leading terms, which will contribute additional $\Psi\leq N^{-\epsilon}$ from the local law. By invoking the expansions iteratively for $D$ times with $D>0$ being sufficiently large depending only on $\epsilon$, we prove in Proposition~\ref{unmatch_lemma} below that these unmatched terms in (\ref{fourth_term}) (without the factor $\sqrt{N}$) can be bounded by $O_\prec(N^{-1})$, where the error $N^{-1}$ is from the index coincidences, \eg $a=b$. 

Before we give the formal statement, we define the following abstract form of averaged products of Green function entries and possible shifted diagonal Green function factors $G_{vv}-m_{sc}$ that will be generated in the expansions as explained later. We remark that the abstract form below is slightly different than in \cite[Definition 4.2]{Schnelli+Xu} to adapt to generalized Wigner matrices with inhomogeneous variances.

\begin{definition}
	For any fixed $m \in \N$, we use  $\mathcal{I}_m:=\{v_1,\ldots, v_{m}\}$ to denote a set of $m$ summation indices ranging from $1$ to $N$ which is split into two disjoint subsets $\mathcal{I}_{m_1}^{(1)}$ and $\mathcal{I}_{m_2}^{(2)}$ with $m_1+m_2=m$. 
For any $n_1 \in \N$, we use $\prod^{n_1}_{i=1} G_{x_{i} y_{i}}$ to denote a product of $n_1$ (not necessarily off-diagonal) Green function entries, where each row and column index $x_i, y_i$ represents an element in the first subset $\mathcal{I}_{m_1}^{(1)}$. We also use $\prod_{l=1}^{n_2} (G_{w_{l} w_{l}}-m_{sc})$ to denote a product of shifted diagonal Green function entries where each $w_l$ represents a different element in $\mathcal{I}_{m_1}^{(2)}$. In particular, we have $n_2=m_2$. Then we define
\begin{align}\label{degree_0}
		n:=n_1+n_2, \qquad d:=\#\{ 1 \leq i\leq n_1: x_i \neq y_i\}+n_2,
\end{align}
where the number $n$ is the total number of (shifted) Green function entries in the product, and the number $d~(\leq n)$ is the number of off-diagonal Green function entries plus the number of shifted diagonal Green function entries, which is also referred to the degree of a term in (\ref{product}) below. Then we define an averaged product of (shifted) Green function entries of degree $d$, \ie
\begin{align}\label{product}
		\frac{1}{N^{m}}  &\sum_{v_1, \ldots,v_m=1}^N   c_{v_1, \ldots, v_m}(t,z)  \prod^{n_1}_{i=1} G_{x_{i} y_{i}} (t,z)\prod_{l=1}^{n_2} (G_{w_{l} w_{l}}-m_{sc}) (t,z) \nonumber\\
	=&:\frac{1}{N^{m}}\sum_{\mathcal{I}
	_m} c_{\mathcal{I}_m} \prod^{n_1}_{i=1} G_{x_i y_i} \prod_{l=1}^{n_2} (G_{w_{l} w_{l}}-m_{sc}), \qquad t \in \R^+,  z \in \C^+,
\end{align}
where $c_{\mathcal{I}_m} \equiv c_{v_1, \ldots, v_m}(t,z)$ is a uniformly bounded and deterministic function. A term in (\ref{product}) of degree $d$ is denoted by $Q_d \equiv Q_d(t,z)$ in general. We use $\mathcal{Q}_d \equiv \mathcal{Q}_d(t,z)$ to denote the collection of the terms of the form in (\ref{product}) of degree $d$. 
\end{definition}

For any $Q_d \in \mathcal{Q}_d$, it is clear from the local law in (\ref{G}) that 
\begin{align}\label{localaw_0}
	|Q_d(t,z)| \prec \Psi^d+N^{-1},
\end{align}
uniformly in $z \in {\mathcal S}_{\mathrm{edge}}$ given in (\ref{S_edge}) and $t \geq 0$, where the last error $N^{-1}$ is from the cases when at least two summation indices coincide in the summation.

Next we follow \cite[Definition 4.2]{Schnelli+Xu} to define unmatched terms of the form in (\ref{product}). Due to (\ref{symmetric}) for the real case, we do not distinguish between the row and column indices. The modifications for unmatched terms in the complex case can be found in \cite[Definition 4.2]{Schnelli+Xu}.

\begin{definition}[Unmatched term, unmatched index]\label{unmatch_def}
Let $Q_d$ be a general term of the form in (\ref{product}). For any summation index ${v}_j \in  \mathcal{I}_m$, let $\nn({v}_j)$ be the number of appearances of the index ${v}_j$ as the row or column index in the product of the (shifted) Green function entries. In particular, for any ${v}_j \in \mathcal{I}_{m_1}^{(1)} \subset \mathcal{I}_m$,
	\begin{align}\label{nu_number}
		\nn({v}_j):=
			\#\{ 1 \leq i \leq n_1: x_i \equiv {v}_j \}+\#\{ 1 \leq i \leq n_1: y_i \equiv {v}_j \}.
	\end{align}
	If $\nn(v_j)$ is odd, then we say that the summation index ${v}_j$ is unmatched. Otherwise, $v_j$ is matched.	Moreover, for any summation index $v_j \in \mathcal{I}_{m_2}^{(2)}\subset \mathcal{I}_m$, $\nn(v_j)=2$ hence $v_j$ is matched .

	 If there exists at least one (equivalently two) unmatched summation index in $\mathcal{I}_{m_1}^{(1)}$, then we say $Q_d$ is an unmatched term, denoted by $Q^o_d$ in general. The collection of unmatched terms of the form in (\ref{form}) with degree $d$ is denoted by $\mathcal{Q}_d^o \subset \mathcal{Q}_d$. 
\end{definition}

Following \cite[Proposition 4.3]{Schnelli+Xu}, the proof of the proposition below is presented in the appendix.
\begin{proposition}\label{unmatch_lemma}
	Consider an unmatched term $Q^o_d \in \mathcal{Q}_d^o$ of the form in (\ref{form}) with fixed $n \in \N$ given in (\ref{degree_0}). Then we have
	\begin{align}\label{unmatch_lemma equation}
		|\E[Q^o_d(t,z)]|=O_{\prec}\big(N^{-1}\big)\,,
	\end{align}
	uniformly in $t \in \R^+$ and $z \in {\mathcal S}_{\mathrm{edge}}$ given in (\ref{S_edge}). 
\end{proposition}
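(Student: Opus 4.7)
The plan is to extend the strategy of \cite[Proposition 4.3]{Schnelli+Xu} from Wigner matrices to the generalized Wigner setting, where the homogeneous variance $1/N$ is replaced by the inhomogeneous weight $S_{v_*k}$, comparable to $1/N$ by \eqref{flat}. Starting from an unmatched term $Q^o_d$, I would pick an unmatched summation index $v_* \in \mathcal{I}_{m_1}^{(1)}$ with $\nn(v_*)$ odd, and single out one off-diagonal Green function entry $G_{v_*y_1}$ in which $v_*$ appears as an index. Applying the resolvent identity $z G_{v_*y_1} = \sum_k h_{v_*k} G_{ky_1} - \delta_{v_*y_1}$ together with the self-consistent equation $z + m_{sc} = -1/m_{sc}$ rewrites $G_{v_*y_1}$ in terms of a sum over a fresh index $k$ plus a coincidence term; applying the cumulant expansion of Lemma \ref{cumulant} to $h_{v_*k}$ then produces a second-cumulant contribution with coefficient $S_{v_*k}(t)$ summed over $k$, together with higher-order cumulant contributions carrying additional factors of $N^{-(p-1)/2}$ that become negligible after iteration.

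A parity bookkeeping ensures that the unmatched structure is preserved by one step of this expansion: removing $G_{v_*y_1}$ via the resolvent identity reduces the appearances of $v_*$ to $\nn(v_*) - 1$ (even), and each Leibniz term in the differentiation $\partial/\partial h_{v_*k}$ reintroduces $v_*$ exactly once in one of the two new Green function entries produced by \eqref{dH}, so $v_*$ still appears $\nn(v_*)$ times in the new product and remains unmatched, while the fresh index $k$ appears exactly twice (once from the resolvent, once from the differentiation) and is matched. The leading contributions are organized into terms of degree at least $d+1$, because each differentiation replaces a single Green function entry by two new ones; boundary contributions coming from $\delta_{v_*y_1}$ or from other index coincidences caused by the differentiation restrict one summation and hence carry an explicit $N^{-1}$ factor, so they are bounded by $O_\prec(N^{-1})$ using the local law \eqref{G} and, where off-diagonal entries survive, the generalized Ward identity \eqref{ward_2}.

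Iterating this expansion a fixed number $D := \lceil 2/\epsilon \rceil$ of times yields a finite tree of terms comprising: unmatched terms of degree at least $d + D$; boundary coincidence terms, each of size $O_\prec(N^{-1})$; and higher-cumulant remainders bounded by $O_\prec(N^{-D/2})$, where the truncation error from Lemma \ref{cumulant} is handled using \eqref{moment_condition} exactly as in the derivation of \eqref{sde_im}. By the a priori bound \eqref{localaw_0} together with $\Psi \leq N^{-\epsilon}$ from \eqref{G}, the surviving high-degree unmatched terms satisfy $|Q^o_{d+D}| \prec \Psi^{d+D} + N^{-1} \leq N^{-1}$, yielding the claim $|\E[Q^o_d]| \prec N^{-1}$ after taking expectations and summing the contributions of the tree.

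The main obstacle is the combinatorial bookkeeping of the expansion tree: one must verify uniformly across all nodes that the unmatched character persists (or else that the resulting matched term is a boundary term of size $N^{-1}$), and one must check that the inhomogeneous weight $S_{v_*k}$ behaves no worse than $1/N$ in the estimates imported from \cite{Schnelli+Xu}. For the latter, the crucial input is the uniform upper bound $N S_{v_*k} \leq C_{\sup}$ from \eqref{flat} together with the doubly stochastic property $\sum_k S_{v_*k} = 1$, which ensure that summing a fresh index against the weight $S_{v_*k}$ contributes at most an $O(1)$ factor, as in the Wigner case. Unlike the argument in Section \ref{sec:step1}, the spectral gap of $S$ from \eqref{S_spectrum} is not needed here, since the unmatched structure by itself already drives the bound down to $N^{-1}$ through the parity and coincidence mechanisms described above.
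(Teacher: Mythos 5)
Your proposal correctly identifies the expansion mechanism (resolvent identity plus cumulant expansion via an unmatched index), correctly observes that the parity of $\nn(v_*)$ is preserved so $v_*$ stays unmatched and the fresh index is matched, and correctly notes that the inhomogeneous weight $S_{v_*k}$ is harmless because $NS_{v_*k}$ is uniformly bounded and can be absorbed into the coefficient $c_{\mathcal I_m}$ of the form (\ref{product}). However, there is a genuine gap in the step where you claim that ``the leading contributions are organized into terms of degree at least $d+1$, because each differentiation replaces a single Green function entry by two new ones.'' This conflates the total number of Green function factors $n$ with the degree $d$ from (\ref{degree_0}), which counts only off-diagonal entries and \emph{shifted} diagonals $(G_{ww}-m_{sc})$. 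When the differentiation $\partial/\partial h_{v_*k}$ lands on a second Green function entry carrying the index $v_*$, say $G_{v_*y_i}$, the Leibniz rule produces $G_{v_*v_*}G_{ky_i}$. Here $G_{v_*v_*}$ is an \emph{unshifted} diagonal entry, contributing $0$ to the degree, and there is no cancellation against $m_{sc}\E[Q_d]$ for this term (the double-stochastic cancellation only applies to the fresh-index diagonal $G_{kk}$, not to $G_{v_*v_*}$). The net degree change is $0$, so you obtain a leading term of the \emph{same} degree $d$. Consequently, iterating $D$ times does not yield degree $\geq d+D$.

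The paper deals with exactly this obstruction via a two-case, two-level iteration (see (\ref{case1})--(\ref{case2}) and the discussion around them): when $\nn(a)\geq 3$ the expansion produces degree-$d$ leading terms in which $\nn(a)$ has decreased by $2$ (either by replacing $G_{aa}\to m_{sc}$ when a diagonal factor is present, or by replacing a pair of $a$-indices by a fresh matched index $j$), and only once $\nn(a)$ has been driven down to $1$ does the expansion via (\ref{case2}) produce no degree-$d$ leading term, forcing the degree to genuinely increase. The total number of expansion steps before a single unit of degree gain is therefore $O(\nn(a))$, not one, and the tree must be iterated $O(Dn)$ times rather than $D$ times. To repair your argument you would need to replace the single-step degree-increase claim with this $\nn(a)$-reduction bookkeeping, essentially reproducing the paper's inner iteration, and then re-derive the combinatorial bounds on the tree size accordingly.
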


Note that the third order terms in (\ref{fourth_term}) are unmatched terms of the form in (\ref{product}) with a factor $\sqrt{N}$, since $\nn(a)=\nn(b)=3$. Therefore using Proposition \ref{unmatch_lemma}, these third order terms can be bounded by
\begin{align}\label{third_estimate}
	|K_3|=O_{\prec}(N^{-1/2}).
\end{align}
Combining (\ref{fourth_estimate}) and (\ref{third_estimate}) with (\ref{sde_im}), we have
\begin{align}\label{final}
	\Big|\frac{\dd }{\dd t}\E [ \Im m_N(t,z)]\Big| =\ee^{-t} \Big( O_{\prec}\big(\frac{\E[\Im m_N(t,z)]}{N\eta}\big)+O_{\prec}\big(N^{-1/2}\big) \Big)\,,
\end{align}
uniformly in $z \in {\mathcal S}_{\mathrm{edge}}$ and $t\in \R^+$. This proves Proposition \ref{lemma_step_2}.
\end{proof}

\section{Proof of Theorem \ref{green_comparison}}\label{sec:proof}
In this section, we extend the proof of Proposition \ref{GCT_mn} to prove Theorem \ref{green_comparison} for a general function~$F$. The proof will follow the same strategy outlined in Subsection \ref{sec:strategy} and we will address only the modifications needed for a general $F$. More precisely, we divide the proof into two steps:
\begin{enumerate}
	\item	Given the matrix interpolating flow $H^{(1)}$ in (\ref{sum_2}), we iteratively use analogous expansions in Section \ref{sec:step1} for products of Green function entries of the form in (\ref{form}) with a general $F$, in combination with the spectral property of the variance matrix $S$ in (\ref{S_spectrum}) and the improved estimate of $\E[\Im m^{(1)}_N(t,z)]$ in Proposition \ref{GCT_mn_1} to prove that	
	\begin{align}\label{green_difference_1_final}
		\Big| \big(\E^{W^{S}}-\E^{\mathrm{G \beta E}}\big)  \Big[ F\Big( N \int_{E_1}^{E_2} \Im m_N(y+\ii \eta) \dd y\Big) \Big]\Big| =O_\prec(N^{-1/3}).
	\end{align}
	\item	Given the matrix interpolating flow $H^{(2)}$ in (\ref{sum_1}), we adapt the expansions of Green function entries in Section \ref{sec:step2} to estimate the third order (unmatched) terms with a general $F$. Moreover, we use the generalized Ward identity in~(\ref{ward_1})-(\ref{ward_2}) and the improved estimate of $\E[\Im m^{(2)}_N(t,z)]$ in Proposition \ref{GCT_mn_2} to bound the fourth order (matched) terms with a general $F$, \ie
	\begin{align}\label{green_difference_2_final}
		\Big| \big(\E^{H}-\E^{W^{S}}\big)  \Big[ F\Big( N \int_{E_1}^{E_2} \Im m_N(y+\ii \eta) \dd y\Big) \Big]\Big| =O_\prec(N^{-1/3}).
	\end{align}
\end{enumerate}
Combining (\ref{green_difference_1_final}) and (\ref{green_difference_2_final}), we hence finish the proof of Theorem \ref{green_comparison}.

\subsection{Proof of the estimate in (\ref{green_difference_1_final})}\label{subsec:step1}

	In the first step, we consider the modified matrix interpolating flow $H^{(1)}$ in (\ref{sum_2}). For any $E_1,E_2$ such that $2-C_1 N^{-\frac{2}{3}}\leq E_1<E_2\leq 2+C_2  N^{-2/3+\epsilon}$ and $N^{-1+\epsilon} \leq \eta \leq N^{-2/3-\epsilon}$, we define 
	\begin{align}\label{X_1}
			\X^{(1)} = \X^{(1)}(t):=N \int_{E_1}^{E_2}\Im m_N^{(1)}(t,x+\ii \eta) \dd x, \qquad t\in \R^+.
		\end{align}
	Taking the time derivative of $\E[F(\X^{(1)})]$ as the analogue of (\ref{intial_step}), we have 
	\begin{align}\label{sde_im_F_0}
		\frac{\dd}{\dd t}\E[F(\X^{(1)})]=&-\sum_{a, b=1}^{N} \E\Big[F'(\X^{(1)}) \int_{E_1}^{E_2} \frac{\dd h^{(1)}_{ab}}{\dd t} \Im \big( \sum_{v=1}^N G^{(1)}_{va} G^{(1)}_{bv} (t,x+\ii \eta)\big)  \dd x \Big]\nonumber\\
		=&-\sum_{a, b=1}^{N} \E\Big[F'(\X^{(1)})  \frac{\dd h^{(1)}_{ab}}{\dd t} \Dim \big(  G^{(1)}_{ba} \big)  \Big]
	\end{align}
	where we used that $(G^2)(z)=\frac{\dd G(z)}{\dd z}$, and introduced the following short hand notation $\Dim$: For any function $P(t,z): \R^+ \times \C^+ \rightarrow \C^+$, we define 
	\begin{align}\label{dim}
		 \Dim P :=\Im P(t,E_2+\ii \eta)-\Im P(t,E_1+\ii \eta).
	\end{align} 
In the following, we will omit the superscript $(1)$ of $G^{(1)}$ and $\X^{(1)}$ for notational simplicity. Note that from (\ref{dH}), we have a new differentiation rule for a general $F$, \ie
\begin{align}\label{dH_F}
	\frac{\partial \X}{\partial h_{ab}}
	=&-\frac{2}{1+\delta_{ab}} \Im \int_{E_1}^{E_2} (G^2)_{ba} (t,x+\ii\eta) \dd x=-\frac{2}{1+\delta_{ab}}\Dim G_{ba}.
\end{align}
Performing cumulant expansions as in \eqref{sde_im_2}, we obtain from (\ref{sde_im_F_0}) that
	\begin{align}\label{sde_im_F_2}
		\frac{\dd}{\dd t}\E[F(\X^{(1)})]=&-\frac{\ee^{-t}}{2}\sum_{a,b} \big( S_{ab}-\frac{1}{N}\big)(1+\delta_{ab})\E \Big[ \frac{\partial F'(\X) \Dim G_{ba}}{\partial h_{ab}}\Big]\nonumber\\
		=&\ee^{-t}\sum_{a,b}  T_{ab} \E \Big[ F'(\X) \Dim \big(G_{aa} G_{bb} \big) \Big]+\ee^{-t}\sum_{a,b}  T_{ab} \E \Big[ F'(\X) \Dim \big(G_{ab} G_{ba}\big) \Big]\nonumber\\
		&+\ee^{-t}\sum_{a,b}  T_{ab} \E \Big[ F''(\X) \Dim G_{ab} \Dim G_{ba} \Big],
	\end{align}
	where we used (\ref{dH}), (\ref{dH_F}) and that $\partial/\partial h_{ab}$ commutes with $\Dim$ in (\ref{dim}). Then it suffices to prove
\begin{align}\label{green_difference_int}
	\Big|\frac{\dd}{\dd t}\E[F(\X^{(1)}(t))]\Big|=\ee^{-t} O_\prec(N^{-1/3}).
\end{align}
Integrating (\ref{green_difference_int}) over $t \in [0,T_0]$ with $T_0=10\log N$ and using (\ref{approxxxx}), we obtain 
\begin{align}\label{green_difference_2}
	\Big| \big(\E^{W^{\wt S}}-\E^{\mathrm{G \beta E}}\big)  \Big[ F\Big( N \int_{E_1}^{E_2} \Im m_N(y+\ii \eta) \dd y\Big) \Big]\Big| =O_\prec(N^{-1/3}).
\end{align}
Similar to Lemma \ref{lemma_third_step}, the slight modifications on the variance profile matrix $S$ in (\ref{tilde_S}) will not effect the estimate in (\ref{green_difference_2}) up to an error $O_\prec(N^{-1/3})$, and we hence conclude with (\ref{green_difference_1_final}).

The rest of this subsection is devoted to proving (\ref{green_difference_int}). Since the matrix $T$ satisfies the summation property in (\ref{prop_T}), the first term on the right side of (\ref{sde_im_F_2}) is then given by
$$\sum_{a,b}  T_{ab}\E \Big[ F'(\X) \Dim \big(G_{aa} G_{bb}\big) \Big]=\sum_{a,b}  T_{ab}\E \Big[ F'(\X) \Dim \big((G_{aa}-m_{sc}) (G_{bb}-m_{sc})\big) \Big].$$
Thus all the second order terms in (\ref{sde_im_F_2}) can be written as terms of the form in (\ref{form}) with additional derivatives of $F$ and the abbreviations $\Dim$ in front of the (shifted) Green function entries. The naive size of these terms is $O_\prec(N\Psi^2)$ and we next use iterative expansions and the estimate in (\ref{img_wigner_1}) to improve the upper bound to $O_\prec(N^{-1/3})$. 

To study these terms in general, we introduce the following abstract form analogues to (\ref{form}) for a general function $F$.
\begin{definition}[General function $F$, \cf Definition \ref{def_form}]\label{def_form_F}
	For any $d \in \N$ with $d\geq 2$, we use $\mathcal{J}_d:=(j_1,\ldots, j_{d})$ to denote a set of $d$ ordered summation indices ranging from $1$ to $N$. For any $2\leq m_0 \leq d$, the first $m_0$ summation indices $j_1,\ldots,j_{m_0}$ have the non-uniform weights from $\prod_{p=1}^{m_0-1}(T^{k_p})_{j_{p}j_{p+1}}$, and each of the remaining $d-m_0$ indices $j_{m_0+1},\ldots,j_{d}$ has a uniform weight $N^{-1}$ in the summation. Then we consider the abstract form of $d$ (shifted) Green function entries,
	\begin{align}\label{form_F}
		\frac{1}{N^{d-m_0}} \sum_{j_1, \ldots, j_{d}} \prod_{p=1}^{m_0-1} (T^{k_p})_{j_{p}j_{p+1}} \E \Big[F^{(\alpha)}(\X)  \prod_{q=1}^{\alpha}    \Dim\Big( (m_{sc})^{c_q} \prod_{i=1}^{m^{(q)}_1}G_{x^{(q)}_i y^{(q)}_i} \prod_{l=1}^{m^{(q)}_2} \big(G_{w^{(q)}_lw^{(q)}_l}-m_{sc}\big) \Big) \Big], 
	\end{align}
	with $\{k_p\},\{c_q\} \in \N$, where $F^{(\alpha)}$ is the $\alpha$-th derivative of the smooth function $F$,  each row and column index of the (shifted) Green function entries $x^{(q)}_i$, $y^{(q)}_i$ and $w^{(q)}_l$ represents an element in $\mathcal{J}_d$ with $x^{(q)}_i \neq y^{(q)}_i$, and each element in $\mathcal{J}_d$ appears exactly twice as the row or column index. In particular, $d=\sum_{q=1}^{\alpha}(m_1^{(q)}+m_2^{(q)})$. A term of the form in (\ref{form_F}) with degree $d$ is in general denoted by $\wt P_{d} \equiv \wt P_{d}(t,z)$.  The collection of the terms in (\ref{form_F}) with degree $d$ is denoted by $\wt{\mathcal{P}}_{d} \equiv \wt{\mathcal{P}}_{d}(t,z)$.
\end{definition}
We remark that the deterministic functions $(m_{sc})^{c_q}$ in (\ref{form_F}) cannot be moved outside due to $\Dim$ defined in (\ref{dim}). Since the derivatives of $F$ are uniformly bounded and that $|m_{sc}|\leq 1$, the naive sizes in (\ref{initial}) and (\ref{large_K_term}) also hold true for any term $\wt P_d$ of the form in (\ref{form_F}).

Given any term $\wt P_d$ in (\ref{form_F}), using the new differentiation rule in (\ref{dH_F}) to compute $\partial F^{(\alpha)}(\X)/\partial h_{ab}$, we extend the expansions in (\ref{expand_case_diagonal}) and (\ref{expand_case_offdiagonal}) to the cases with a general $F$. 

\begin{enumerate}
	\item[(a)] If the special index $j_1$ appears in a (shifted) diagonal Green function entry, say $w^{(1)}_1 \equiv j_1$, then we have \cf (\ref{cf_expand}),
	\begin{align}\label{cf_expand_F}
		\E[\wt P_d]=&(1-\ee^{-t})\E[\wt P_d(k_1 \rightarrow k_1+1, c_1 \rightarrow c_1+2)]+\sum_{\wt P_{d_1} \in \wt{\mathcal{P}}_{d_1}, d_1 = d+1} \E[\wt P_{d_1}],
	\end{align}
where the leading term of degree $d$ is obtained from the original term $\wt P_d$ by increasing the power of the matrix $T^{k_1}$ to $k_1+1$ and increasing the power of the function $(m_{sc})^{c_1}$ to $c_1+2$,  the second group of terms contains at most $4(d+1)$ terms of the form in (\ref{form_F}) with higher degrees $d+1$ and possible factors $1-\ee^{-t}$. Iterating (\ref{cf_expand_F}) for $K-k_1$ times with $K$ given in (\ref{K_choice}), we obtain the analogue of (\ref{expand_case_diagonal}), \ie
	\begin{align}\label{expand_case_diagonal_F}
		\E[\wt P_d]=&\sum_{\wt P_{d'} \in \wt{\mathcal{P}}_{d'}, d'= d+1} \E[\wt P_{d'}]+O_\prec(N^{-8}),
	\end{align}
	where the first group of terms contains at most $4K(d+1)$ terms of the form in (\ref{form_F}) with higher degrees $d+1$, and each term comes with a possible factor $1-\ee^{-t}$. 
	
	\item[(b1)] If the special index $j_1$ appears in two different off-diagonal Green function entries with the same superscript $(q)$, say $x^{(1)}_1,x^{(1)}_2 \equiv j_1$ and $y^{(1)}_1,y^{(1)}_2 \not\equiv j_1$, then we have \cf (\ref{expand_case_2}),
	\begin{align}\label{expand_case_2_F}
		\E[\wt P_d]=&(1-\ee^{-t}) \E[\wt P_d(k_1 \rightarrow k_1+1,c_1\rightarrow c_1+2)]\big)+\sum_{\substack{\wt P_{d_1} \in \wt{\mathcal{P}}_{d_1},\\d_1 = d+1}}  \E[\wt P_{d_1}]+O_{\prec}(N^{-1/3}),
	\end{align}
where the last error $O_\prec(N^{-1/3})$ is obtained from the cases with index coincidence (see (\ref{im_estiamte_2})-(\ref{im_estiamte_4})) using the improved estimate of $\E[\Im m^{(1)}_N(t,z)]$ in (\ref{img_wigner_1}), the second group of terms in (\ref{expand_case_2_F}) contains at most $4(d+1)$ terms of the form in (\ref{form_F}) with possible factors $1-\ee^{-t}$.

	\item[(b2)] If the special index $j_1$ appears in two off-diagonal Green function entries with different superscripts $(q)$, say $x^{(1)}_1,x^{(2)}_1 \equiv j_1$ and $y^{(1)}_1,y^{(2)}_2 \not\equiv j_1$, then we have similar to (\ref{expand_case_2_F}),
	\begin{align}\label{expand_case_2_FF}
	\E[\wt P_d]=&(1-\ee^{-t}) \E[\wt P_d(k_1 \rightarrow k_1+1,c_1\rightarrow c_1+1,c_2 \rightarrow c_2+1)]\big)\nonumber\\
	&\qquad \quad+\sum_{{\wt P_{d_1} \in \wt{\mathcal{P}}_{d_1},d_1 = d+1}}  \E[\wt P_{d_1}]+O_{\prec}(N^{-1/3}).
\end{align}
	Iterating either of the expansions in (\ref{expand_case_2_F}) or (\ref{expand_case_2_FF}) for $K-k_1$ times with $K$ given in (\ref{K_choice}), we obtain the analogue of (\ref{expand_case_offdiagonal}), \ie
	\begin{align}\label{expand_case_offdiagonal_F}
		\E[\wt P_d]	=&\sum_{\wt P_{d''} \in \wt{\mathcal{P}}_{d''}, d''= d+1}\E[\wt P_{d''}]+O_{\prec}(N^{-8})+O_{\prec}(K N^{-1/3}),
	\end{align}
	where the first group of terms contains at most $4K(d+1)$ terms of the form in (\ref{form_F}) with higher degrees $d+1$ and possible factors $1-\ee^{-t}$, and the last error $O_\prec(KN^{-1/3})$ is from the cases with index coincidences using (\ref{img_wigner_1}).
\end{enumerate}

We omit the proof details since they are quite similar to Subsection \ref{subsec:novel_expansion} using additionally the new differentiation rule in (\ref{dH_F}). Now we go back to (\ref{sde_im_F_2}), whose right side consists of three terms of the form in (\ref{form_F}) with degree two. For a general term of the form in (\ref{form_F}) with degree at least two, denoted by $\wt P_d$, iterating the expansions in (\ref{expand_case_diagonal_F}) and (\ref{expand_case_offdiagonal_F}) for $D \geq \frac{4}{\epsilon}$ times as in (\ref{expand_mechanism})-(\ref{four}), we have
\begin{align}
		\E[\wt P_d]=&O_{\prec}\big( (8KD)^{D} (N \Psi^D+N^{-8})\big)+O_{\prec}\big((8KD)^{D}N^{-1/3}\big)=O_{\prec}(N^{-1/3}).
\end{align}
We hence have proved the estimate in (\ref{green_difference_int}) and conclude with (\ref{green_difference_1_final}).

\subsection{Proof of the estimate in (\ref{green_difference_2_final})}\label{subsec:step2}

In the second step, recalling the matrix interpolating flow (\ref{sum_1}), we define as in (\ref{X_1})
\begin{align}
	\X^{(2)} = \X^{(2)}(t):=N \int_{E_1}^{E_2}\Im m_N^{(2)}(t,x+\ii \eta) \dd x.
\end{align}
Taking the time derivative of $\E[F(\X^{(2)}(t))]$, using the differentiation rules in (\ref{dH}) and (\ref{dH_F}), we have the analogue of (\ref{sde_im}), \ie
\begin{align}\label{sde_im_F}
	\frac{\dd}{\dd t}\E[F(\X^{(2)}(t))]=&-\frac{\ee^{-t}}{2}\sum_{k+1=3}^{4} \frac{1}{k!} \frac{s^{(k+1)}_{ab}(t)}{N^{\frac{k+1}{2}}}  \sum_{a,b} \E \Big[\frac{\partial^k F'(\X) \Dim G_{ba}}{\partial h_{ab}^k}\Big]+O_{\prec}\Big(\frac{1}{\sqrt{N}}\Big),
\end{align}
with $s^{(k+1)}_{ab}(t)$ given in (\ref{s_ab_t}), and we set $G= G^{(2)}(t,z)$, $\X = \X^{(2)}$ for notational simplicity. 

Using the differentiation rules in (\ref{dH}) and (\ref{dH_F}), and that $\partial/\partial h_{ab}$ commutes with $\Dim$ in (\ref{dim}), the third order terms with $k+1=3$ in (\ref{sde_im_F}) can be written as averaged products of Green function entries of the form in (\ref{product}) up to a factor $\sqrt{N}$, with additional derivatives of $F$ and the abbreviations $\Dim$ in front of the Green function entries, \eg
\begin{align}
	&\frac{\sqrt{N}}{N^{2}} \sum_{a,b} s_{ab}^{(3)}(t)\E \big[F'(\X) \Dim (G_{aa} G_{aa}G_{ab})\big], \qquad \frac{\sqrt{N}}{N^{2}} \sum_{a,b} s_{ab}^{(3)}(t)\E \big[F''(\X) \Dim(G_{ab}) \Dim (G_{aa} G_{aa})\big];\nonumber\\
	&\qquad \qquad \qquad\qquad\frac{\sqrt{N}}{N^{2}} \sum_{a,b} s_{ab}^{(3)}(t)\E \big[F^{(3)}(\X) \Dim(G_{ab}) \Dim(G_{ab})\Dim(G_{ab})\big].
\end{align}
Since the index $a$ and $b$ appear three times in the product of Green function entries as the row or column index, these third order terms are unmatched terms; see Definition \ref{unmatch_def}. The statement of Proposition~\ref{unmatch_lemma} still holds true for such a general form and the proof is quite similar using additionally the new differentiation rule in (\ref{dH_F}) for a general $F$. Hence, the third order unmatched terms in (\ref{sde_im_F}) with additional factor $\sqrt{N}$ can be bounded by 
$$\sum_{k+1=3} \mbox{on r.h.s. of (\ref{sde_im_F})}=O_\prec(N^{-1/2}).$$

By direct computations using (\ref{dH}) and (\ref{dH_F}), the fourth order terms with $k+1=4$ in (\ref{sde_im_F}) are also averaged products of Green function entries with additional derivatives of $F$ and $\Dim$ in front, \eg
\begin{align}\label{some_term_F}
	&\frac{1}{N^{2}} \sum_{a,b} s_{ab}^{(4)}(t)\E \big[F'(\X) \Dim (G_{aa} G_{aa}G_{bb}G_{bb})\big]; 
	\nonumber\\
	&\frac{1}{N^{2}} \sum_{a,b} s_{ab}^{(4)}(t)\E \big[F''(\X)\Dim (G_{aa} G_{bb} ) \Dim (G_{aa} G_{bb} )\big];\nonumber\\
	&\frac{1}{N^{2}} \sum_{a,b} s_{ab}^{(4)}(t)\E \big[F^{(3)}(\X)\Dim (G_{ab} ) \Dim (G_{ab} ) \Dim (G_{aa} G_{bb}) \big]; \nonumber\\
	&\frac{1}{N^{2}} \sum_{a,b} s_{ab}^{(4)}(t)\E \big[F^{(4)}(\X)\Dim (G_{ab} ) \Dim (G_{ab} ) \Dim (G_{ab} ) \Dim (G_{ab} )\big].
\end{align}
Using the definition of $\Dim$ in (\ref{dim}), the estimate of $\Im G_{aa}$ in (\ref{ward_1}), the generalized Ward identity in (\ref{ward_2}), and that all derivatives of $F$ are uniformly bounded, these fourth order terms can be bounded by 
$$\sum_{k+1=4} \mbox{on r.h.s. of (\ref{sde_im_F})}=O_\prec\left(\E[\Im m_{N}(t,E_1+\ii \eta)]+\E[\Im m_{N}(t,E_2+\ii \eta)]\right).$$
Combining with the estimate of $\E[\Im m^{(2)}_{N}(t,z)]$ in (\ref{img_wigner_2}), we obtain
\begin{align}\label{green_difference_int_2}
	\Big|\frac{\dd}{\dd t}\E[F(\X^{(1)})]\Big|=\ee^{-t} O_{\prec}(N^{-1/3}),
\end{align}
where the error is proportional to the fourth order cumulants of the normalized entries of the generalized Wigner matrix~$H$ in (\ref{s_ab_t}). Integrating (\ref{green_difference_int_2}) over $t \in [0,T_0]$ with $T_0=10\log N$ and using (\ref{approxxxx}), we finish the proof of (\ref{green_difference_2_final}).

\section{Appendix}

\subsection{Proof of Lemma \ref{lemma_third_step}}
We consider the interpolating flow between two independent Gaussian matrices with variance profile matrix $\wt S$ and $S$ respectively, \ie
$$\wt H(t):=\mathrm{e}^{-\frac{t}{2}}W^{\wt S} +\sqrt{1-\mathrm{e}^{-t}} W^{S}, \qquad (\wt S)_{ab} =(S)_{ab} (1+\delta_{ab}).$$
We define the Green function of $\wt H(t)$ by $\wt G=\wt G(t,z)$ and its normalized trace by $\wt m_N(t,z)$. Taking the time derivative of $\E[\Im \wt m_N(t,z)]$ and performing the cumulant expansions as in (\ref{intial_step})-(\ref{sde_im_2}), we obtain
\begin{align}
	\frac{\dd}{\dd t}\E[\Im \wt m_N(t,z)]=&-\frac{1}{N}\sum_{v=1}^N\sum_{a, b} \E\Big[\Im \big( \wt G_{va} \wt G_{bv}\frac{\dd \wt h_{ab}(t)}{\dd t} \big)\Big]\nonumber\\
	=&-\frac{\ee^{-t}}{2} \frac{1}{N}\sum_{v=1}^N\sum_{a , b} \big(S_{ab}- \wt S_{ab}\big) \E \Big[\Im \frac{\partial \wt G_{va} \wt G_{bv}}{\partial h_{ab}}\Big]\nonumber\\
	=&\ee^{-t}\frac{1}{N}\sum_{v,a} S_{aa} \E \Big[\Im \big( \wt G_{va} \wt G_{aa} \wt G_{av}\big)\Big].
\end{align}
We remark that the local law in (\ref{G}) and the generalized Ward identity in (\ref{ward_1})-(\ref{ward_2}) also hold for the Green function $\wt G$. Using that $S_{aa}=O(N^{-1})$, we obtain that
\begin{align}
	\big|\frac{\dd}{\dd t}\E[\Im \wt m_N(t,z)]\big| \prec C \ee^{-t} \frac{\E[\Im \wt m_N(t,z)]}{N \eta}.
\end{align}
Using Gr\"onwall's inequality, we find that, for any $t\in \R^+$ and $z \in {\mathcal S}_{\mathrm{edge}}$ in (\ref{S_edge}), 
\begin{align}
	\E[\Im \wt m_N(t,z)] \leq C	\E[\Im \wt m_N(0,z)]=C	\E^{\wt S}[\Im \wt m_N(z)].
\end{align}
We hence finish the proof of Lemma \ref{lemma_third_step}.

\subsection{Proof of Proposition \ref{unmatch_lemma}}

Following \cite[Proposition 4.3]{Schnelli+Xu}, in order to prove Proposition \ref{unmatch_lemma}, we first introduce the expansion mechanism of a given term of the form in (\ref{product}) for generalized Wigner matrices with inhomogeneous variances.  

Given an unmatched term $Q_d^o$ in (\ref{product}), let the index $a \in \mathcal{I}_{m_1}^{(1)}$ to be an unmatched index without loss of generality. The expansion via the index $a$ is split into the following two cases:

{\bf Case 1:} 
If there exists a diagonal factor $G_{aa}$ in the first product of Green function entries $\prod^{n_1}_{i=1} G_{x_i y_i}$, then we have
\begin{align}\label{case1}
	\E[Q^o_d]=\E[Q^o_d\big( G_{aa} \rightarrow m_{sc}\big)]+ \sum_{ \substack{Q^o_{d''} \in \mathcal{Q}^o_{d''}  \\ d'' \geq d+1 }} \E[Q^o_{d''}]+\frac{1}{\sqrt{N}} \sum_{Q^o_{d'} \in \mathcal{Q}^o_{d'}; d' \geq d} \E [Q^o_{d'}]+O_{\prec}\big(N^{-1}\big),
\end{align}
where we replaced the diagonal Green function entry $G_{aa}$ with $m_{sc}$ for the leading term which remains unmatched. The second group of terms contains at most $2n$ terms of the form in (\ref{product}) with higher degrees, denoted by $Q^o_{d''}$ in general, and the third group of terms contains at most $4(n+1)^2$ terms denoted by $Q^o_{d'}$ with an additional factor $\frac{1}{\sqrt{N}}$.

{\bf Case 2:}
If there is no diagonal factor $G_{aa}$ in the first product of Green function entries $\prod^{n_1}_{i=1} G_{x_i y_i}$, assuming that $x_1 \equiv a$, $y_1\not \equiv a$ without loss of generality, then we have
\begin{align}\label{case2}
	\E[Q^o_d]=&m^2_{sc}\sum_{\substack{2\leq i \leq n_1\\x_i \equiv a,y_i \not\equiv a}} \E \Big[ Q^o_{d} \big( x_1 ,x_i \equiv a \rightarrow j \big)\Big]+m^2_{sc}\sum_{\substack{2\leq i \leq n_1\\ x_i\not\equiv a,y_i \equiv a}} \E \Big[ Q^o_{d} \big( x_1, y_i \equiv a\rightarrow j \big)\Big]\nonumber\\
	&+\sum_{ \substack{Q^o_{d''} \in \mathcal{Q}^o_{d''}  \\ d'' \geq d+1} } \E[Q^o_{d''}]+\frac{1}{\sqrt{N}} \sum_{\substack{Q^o_{d'} \in \mathcal{Q}^o_{d'}\\ d' \geq d}} \E [Q^o_{d'}]+O_{\prec}(N^{-1}),
\end{align}
where we have replaced a pair of the index $a$ from two distinct off-diagonal Green function entries with the fresh index $j$ for the leading terms, and the index $a$ remains unmatched. The first group of terms on the second line contains at most $2n$ terms of the form in (\ref{product}) with higher degrees, and the second group of terms contains at most $4(n+1)^2$ terms of the form in (\ref{product}) with an additional factor $\frac{1}{\sqrt{N}}$.

Next we will sketch the proof of Proposition \ref{unmatch_lemma} by iteratively using these two type of expansions above, and similar arguments can be found in \cite{Schnelli+Xu,SXsample}. Given an unmatched term $Q_d^o$ of the form in (\ref{product}) with fixed $n$ given in (\ref{degree_0}), since the index $a$ is unmatched, the number of appearances of the index $a$, \ie $\nn(a)$ defined in (\ref{nu_number}), is odd and $\nn(a)\leq n$. 

If $\nn(a)=1$, since there is no $G_{aa}$ factor, we perform the expansion in (\ref{case2}). There will be no leading term of degree $d$ in the first line of (\ref{case2}) and we obtain finitely many unmatched terms with higher degrees at least $d+1$. Otherwise, if $\nn(a) \geq 3$, performing either the expansion in (\ref{case1}) or (\ref{case2}), the number of appearances of the index $a$ in each resulting leading term on the right side has been decreased by two. Thanks to $a$ still being unmatched, we can further expand these leading terms via the index $a$ until the number of appearances of $a$ is reduced to one. Then we go back to the previous case with $\nn(a)=1$.

In this way, we have expanded the unmatched term $\E[Q_d^{o}]$ into finitely many unmatched terms of degrees at least $d+1$ with improved estimate from (\ref{localaw_0}), \ie
\begin{align}\label{third_unmatch2}
	\E[Q_d^o(t,z)]=\sum_{\substack{ Q^o_{d'} \in \mathcal{Q}^o_{d'} \\ d' \geq d+1}}  \E[Q^o_{d'}(t,z)]+\frac{1}{\sqrt{N}} \sum_{\substack{Q^o_{d''} \in \mathcal{Q}^o_{d''}\\ d'' \geq d}} \E [Q^o_{d''}(t,z)]+O_{\prec}\big(N^{-1}\big)\,,
\end{align}
where the number of unmatched terms on the right side above is bounded by $(Cn)^{cn}$, and the number of the Green function entries in  each term is bounded by $Cn$ for some numerical constants~$C,c>0$.

Iterating the expansion process in (\ref{third_unmatch2}) for $D-d$ times with $D>d$ sufficiently large fixed later, the first group of terms on the right side of (\ref{third_unmatch2}) contains at most $\big((C^Dn)^{c^D n} \big)^D$ terms with degrees at least $D$. Similarly the second group of terms with a factor $\frac{1}{\sqrt{N}}$ on the right side of (\ref{third_unmatch2}) contains at most $\big((C^Dn)^{c^D n} \big)^D$ terms with degrees at least $D-1$. We hence obtain from the naive estimate in~(\ref{localaw_0}) that
\begin{align}\label{third_unmatch3}
	|\E[Q_d^o(t,z)]|=O_{\prec}\Big(\Psi^D+\frac{\Psi^{D-1}}{\sqrt{N}}+\frac{1}{N}\Big)=O_{\prec}(\Psi^D+N^{-1})\,,
\end{align}
where the error $O_\prec(N^{-1})$ is from the cases with index coincidences. Choosing $D>\frac{1}{\epsilon}$ sufficiently large depending only on $\epsilon>0$, we finish the proof of Proposition \ref{unmatch_lemma}.

The rest of this subsection is devoted to proving the expansions in (\ref{case1}) and (\ref{case2}). We start with proving the first expansion in (\ref{case1}). We may assume $G_{x_1y_1}=G_{aa}$ without loss of generality. Using that $-\frac{1}{m_{sc}}=z+m_{sc}$ and the definition of resolvent, we have
\begin{align}\label{step1}
	-\frac{\E[Q^o_d]}{m_{sc}}
	=&\E \Big[ \frac{1}{N^{m}}\sum_{\mathcal{I}_m} c_{\mathcal{I}_m} \big( \sum_{j} h_{aj} G_{ja}-1\big) \prod_{i=2}^{n_1} G_{x_i y_i}\prod_{l=1}^{n_2} (G_{w_{l} w_{l}}-m_{sc}) \Big]+m_{sc}\E[Q_d]\nonumber\\
	=&-\E \Big[ \frac{1}{N^{m}}\sum_{\mathcal{I}_m} c_{\mathcal{I}_m}  \prod_{i=2}^{n_1} G_{x_i y_i}\prod_{l=1}^{n_2} (G_{w_{l} w_{l}}-m_{sc}) \Big]+m_{sc}\E[ Q_d]\nonumber\\
	&+\E \Big[ \frac{1}{N^{m}} \sum_{\mathcal{I}_m} c_{\mathcal{I}_m}\sum_{j=1}^N S_{aj}  \frac{ \partial G_{ja}  \prod_{i=2}^{n_1} G_{x_i y_i} \prod_{l=1}^{n_2} (G_{w_{l} w_{l}}-m_{sc}) }{\partial h_{aj}}\Big]\nonumber\\
	&+\E \Big[ \frac{1}{N^{m+\frac{3}{2}}} \sum_{\mathcal{I}_m} c_{\mathcal{I}_m}\sum_{j=1}^N \frac{c^{(3)}(\sqrt{N}h_{aj})}{2!}  \frac{ \partial^2 G_{ja}  \prod_{i=2}^{n_1} G_{x_i y_i} \prod_{l=1}^{n_2} (G_{w_{l} w_{l}}-m_{sc}) }{\partial h^2_{aj}}\Big]+O_{\prec}(N^{-1}),
\end{align}
where $c^{(3)}(\sqrt{N}h_{aj})$ is the third cumulant of the normalized entry $\sqrt{N}h_{aj}$ which is of the constant order, and the last error stems from the truncating the cumulant expansions at the third order.

Using (\ref{dH}), the resulting terms in the last line above are of the form in (\ref{product}) up to a factor $\frac{1}{\sqrt{N}}$, with a new summation index set $\mathcal{I}_{m+1}=\mathcal{I}_m \cup \{j\}$ and two more Green function entries in the product, \ie $n'=n+2$. Since $j$ is a fresh index with $\nn(j)=3$, by direct computations using (\ref{dH}), they are $4n(n+1)$ unmatched terms of degrees at least $d$. In general, we denote all of these terms together by
\begin{align}\label{higher}
	\frac{1}{\sqrt{N}} \sum_{Q^o_{d'} \in \mathcal{Q}^o_{d'}; d' \geq d} \E [Q^o_{d'}].
\end{align}

Similarly, the resulting terms in the second last line of (\ref{step1}) are of the form in (\ref{product}) with $\mathcal{I}_{m+1}=\mathcal{I}_m \cup \{j\}$ and $n'=n+1$. It is straightforward to check that $\nn(j)=2$ for the fresh index $j$ and $\nn(v_j)$ remains the same for any original $v_j$ in $\mathcal{I}_m$ (including the index $a$) . Hence the index $a$ remains unmatched. Using the differentiation rule in (\ref{dH}), we obtain $2n-1$ unmatched terms with higher degrees at least $d+1$, which are denoted in general by
\begin{align}\label{general}
	\sum_{ {Q^o_{d''} \in \mathcal{Q}^o_{d''}, d'' \geq d+1 }} \E[Q^o_{d''}],
\end{align}
except one leading term with degree $d$ from taking $\frac{\partial G_{aj}}{\partial h_{ja}}$, \ie
\begin{align}
	\E \Big[ \frac{1}{N^{m}}\sum_{\mathcal{I}_m} c_{\mathcal{I}_m}\sum_{j=1}^N S_{aj}   G_{jj} G_{aa} \prod_{i=2}^{n_1} G_{x_i y_i} \prod_{l=1}^{n_2} (G_{w_{l} w_{l}}-m_{sc})\Big].
\end{align}
Since $\sum_{j}S_{aj}=1$, there is a cancellation between this leading term and the second term on the right side of (\ref{step1}), \ie $m_{sc}\E[Q_d]$, and we end up with an unmatched term with higher degree $d+1$, \ie
\begin{align}\label{calcel}
	\E \Big[ \frac{1}{N^{m}}\sum_{\mathcal{I}_m,j}c_{\mathcal{I}_m} S_{aj}   (G_{jj}-m_{sc}) G_{aa} \prod_{i=2}^{n_1} G_{x_i y_i} \prod_{l=1}^{n_2} (G_{w_{l} w_{l}}-m_{sc})\Big].
\end{align}
We note that the naive estimate in (\ref{localaw_0}) still holds true and the additional factor $S_{aj}=O(N^{-1})$ is not harmful in the iterative expansions. Therefore, multiplying $-m_{sc}$ on both sides of (\ref{step1}), we have proved  the first expansion in (\ref{case1}).

Next, we continue to prove the second expansion in (\ref{case2}). Assuming there is no $G_{aa}$ factor in $\prod_{i=1}^{n_1} G_{x_iy_i}$, we choose to expand an off-diagonal Green function entry, say $G_{ay_1}$ with $y_1\not\equiv a$. Similarly to (\ref{step1}), we obtain that
\begin{align}\label{step2}
	-\frac{\E[Q^o_d]}{m_{sc}}=&\E \Big[ \frac{1}{N^{m}}\sum_{\mathcal{I}_m} c_{\mathcal{I}_m} \big( \sum_{j} h_{aj} G_{jy_1}-\delta_{ay_1}\big) \prod_{i=2}^{n_1} G_{x_i y_i}\prod_{l=1}^{n_2} (G_{w_{l} w_{l}}-m_{sc}) \Big]+m_{sc}\E[Q_d]\nonumber\\
	=&m_{sc}\E[Q_d]+\E \Big[ \frac{1}{N^{m}} \sum_{\mathcal{I}_m} c_{\mathcal{I}_m}\sum_{j=1}^N S_{aj}  \frac{ \partial G_{jy_1}  \prod_{i=2}^{n_1} G_{x_i y_i} \prod_{l=1}^{n_2} (G_{w_{l} w_{l}}-m_{sc}) }{\partial h_{aj}}\Big]\nonumber\\
	&+\frac{1}{\sqrt{N}} \sum_{Q^o_{d'} \in \mathcal{Q}^o_{d'}; d' \geq d} \E [Q^o_{d'}]+O_{\prec}(N^{-1}),
\end{align}
where the third order terms are obtained similarly as in (\ref{higher}), and the error $O_{\prec}(N^{-1})$ is from truncating the cumulant expansions at the third order and the terms with the index coincidence $\delta_{a y_1}$.

We next estimate the second group of terms on the right side of (\ref{step2}). Since $j$ is a fresh index, the resulting terms have higher degrees at least $d+1$ denoted by (\ref{general}) in general, except the leading terms obtained from acting $\frac{\partial }{\partial h_{aj}}$ on $G_{jy_1}$ or another off-diagonal Green function entry $G_{x_i y_i}~(2\leq i\leq n_1)$ with either $x_i \equiv a$ or $y_i \equiv a$. The leading term corresponding to $\partial G_{jy_1}/\partial h_{aj}$ is given by
$$-\E \Big[ \frac{1}{N^{m}}\sum_{\mathcal{I}_m} c_{\mathcal{I}_m}\sum_{j=1}^N S_{aj}   G_{jj} G_{ay_1} \prod_{i=2}^{n_1} G_{x_i y_i} \prod_{l=1}^{n_2} (G_{w_{l} w_{l}}-m_{sc})\Big],$$
which will be canceled with the first term $m_{sc}\E[Q_d]$ on the right side of (\ref{step2}) as in (\ref{calcel}). For the remaining leading terms, \eg we may assume that $G_{x_2 y_2}=G_{a y_2}$ with $y_2 \not\equiv a$, then the corresponding leading term is given by
\begin{align}\label{example_1}
	-\E \Big[\frac{1}{N^{m} }\sum_{\mathcal{I}_m,j} S_{aj} c_{\mathcal{I}_m}   G_{a a}  G_{j y_1} G_{ jy_2} \prod^{n_1}_{i=3} G_{x_i y_i} \prod_{l=1}^{n_2} (G_{w_{l} w_{l}}-m_{sc}) \Big].
\end{align}
This term has a diagonal factor $G_{aa}$ which can be expanded using the expansion (\ref{case1}) proved in the Case~1. Then we replace one pair of the index $a$ from $G_{a y_1}$ and $G_{a y_2}$ with the fresh index $j$, up to a factor $-m_{sc}$. In general, we write these leading terms for short as
\begin{align}\label{short_temp}
	-m_{sc}\sum_{\substack{2\leq i \leq n_1\\ x_i\equiv a,y_i \not\equiv a}} \E\Big[Q_{d} \big( x_1,x_i \equiv a \rightarrow j \big)\Big]-m_{sc}\sum_{\substack{2\leq i \leq n_1\\x_i \not\equiv a,y_i \equiv a}} \E\Big[Q_{d} \big( x_1,y_i \equiv a\rightarrow j \big)\Big].
\end{align}
Multiplying $-m_{sc}$ on both sides of (\ref{step2}), we have finished the proof of the second expansion in (\ref{case2}).

\medskip

\textit{Acknowledgement: }
This material is based upon work supported by the National Science Foundation under Grant No. DMS-1928930 while K.S. participated in a program hosted by the Mathematical Sciences Research Institute in Berkeley, California, during the Fall 2021 semester.

\end{document}